\title[POINT DISTRIBUTIONS IN TWO-POINT HOMOGENEOUS SPACES]{POINT DISTRIBUTIONS IN COMPACT METRIC SPACES, III. TWO-POINT HOMOGENEOUS SPACES}
\author{M.M. SKRIGANOV}
\address{St. Petersburg Department, Steklov Mathematical Institute, Russian Academy of Sciences}
\email{maksim88138813@mail.ru}
\keywords{Two-point homogeneous spaces, geometry of distances, uniform distribution,  
$t$-designs.}
\subjclass[2010]{11K38, 22F30, 52C99}
\numberwithin{equation}{section}
\newtheorem{theorem}{Theorem}[section]
\newtheorem{lemma}{Lemma}[section]
\theoremstyle{remark}
\theoremstyle{remark}
\newtheorem{definition}{Definition}[section]
\theoremstyle{plain}
\newtheorem{corollary}{Corollary}[section]
\DeclareMathOperator{\diam}{\mathrm{diam}}
\DeclareMathOperator{\Lip}{\mathrm{Lip}}
\DeclareMathOperator{\Spin}{\mathrm{Spin}}
\DeclareMathOperator{\RE}{\mathrm{Re}}
\DeclareMathOperator{\Tr}{\mathrm{Tr}}
\DeclareMathOperator{\Diam}{\mathrm{Diam}}
\def\M{\mathcal M}
\def\Q{\mathbb Q}
\def\F{\mathbb F}
\def\bR{\mathbb R}
\def\la{\lambda}
\def\lan{\langle}
\def\ran{\rangle}
\def\D{\mathcal D}
\def\R{\mathcal R}
\def\E{\mathcal E}
\def\O{\mathcal O}
\def\HH{\mathcal H}
\def\C{\mathbb C}
\def\H{\mathbb H}
\def\bO{\mathbb O}
\def\bR{\mathbb R}
\def\bE{\mathbb E}
\def\bZ{\mathbb Z}
\numberwithin{equation}{section}
\theoremstyle{plain}
\newcommand{\bp}{\begin{proof}}
\newcommand{\ep}{\end{proof}}
\newcommand{\bl}{\begin{lemma}}
\newcommand{\el}{\end{lemma}}
\newcommand{\bt}{\begin{theorem}}
\newcommand{\et}{\end{theorem}}
\newcommand{\bd}{\begin{definition}}
\newcommand{\ed}{\end{definition}}
\newcommand{\ba}{\begin{arrow}}
\newcommand{\ea}{\end{arrow}}
\begin{document}


%
%
%
%




\begin{abstract}

We consider point distributions in compact connected
two-point homogeneous spaces (Riemannian symmetric spaces of rank one).
All such spaces are known, they are the spheres in the Euclidean spaces, 
the real, complex and quaternionic projective spaces and the octonionic projective plane.
Our concern is with discrepancies of distributions in metric balls and sums of 
pairwise distances between points of distributions in such spaces. 

Using the geometric features of two-point spaces, we show that Stolarsky's invariance
principle, well-known for the Euclidean spheres, can be extended to all 
projective spaces and the octonionic projective plane (Theorem~\ref{thm2.1} 
and Corollary~\ref{cor2.1}). 
We obtain the spherical function expansions for discrepancies and sums of 
distances (Theorem~\ref{thm9.1}). Relying on these expansions, we prove in all such 
spaces the best possible bounds for quadratic discrepancies and sums of pairwise distances (Theorem 2.2). 
Applications to $t$-designs on such two-point homogeneous
spaces are also considered. It is shown that the optimal $t$-designs,
recently constructed in \cite{9, 10, 17a}, meet the best possible bounds for quadratic discrepancies and sums of pairwise distances. 
(Corollaries~\ref{cor3.1}, \ref{cor3.2}).  

\end{abstract}



\maketitle

\thispagestyle{empty}



\section*{Contents}

\noindent{} {\bf I. Main results} 

\noindent{} 1. Discrepancies and metrics

\noindent{} 2. Statements of the main results

\noindent{} 3. Applications to $t$-designs 


\noindent{} {\bf II. Geometry of two-point homogeneous spaces \\ and $L_2$-invariance principles}

\noindent{} 4. Preliminaries: Models of projective spaces and chordal metrics 

\noindent{} 5. Proof of Theorem 2.1

\noindent{} 6. Proof of Lemma 2.1

\noindent{} {\bf III. Spherical functions and bounds for discrepancies \\and sums of distances} 

\noindent{}  7. Preliminaries: Commutative spaces and spherical functions

\noindent{}  8. Spherical function expansions for discrepancies and metrics

\noindent{} 9. Bounds for Fourier-Jacobi coefficients

\noindent{} 10. Proof of Theorems 2.2 and 3.1

\noindent{} 11. Additional remarks

References
\medskip

\section*{I. Main results}

\section{Discrepancies and metrics}\label{sec1}

In this section we introduce the basic notation and recall necessary facts
from our previous paper \cite{30} on relationships between discrepancies 
and metrics on general compact metric spaces.

Let $\M$ be a compact connected metric space with a fixed metric $\theta$ 
and a finite Borel measure $\mu$, normalized by
\begin{equation}
\diam (\M,\theta)=\pi, \quad \mu (\M)=1,
\label{eq1.1}
\end{equation}
where 
$\diam (\E,\rho)=\sup \{ \rho(x_1,x_2): x_1,x_2\in \E\}$
denotes the diameter of a subset $\E\subseteq \M$  with respect to a metric $\rho$. 

We write $B_r(y)=\{x:\theta (x,y)<r\}$ for the ball of radius $r\in \R$
centered at  $y\in \M$ and of volume $v_r(y)=\mu (B_r(y))$, 
here $\R = \{r=\theta(x_1,x_2): x_1,x_2\in \M\}$ is the set of all possible radii. 
Since the space $\M$ is connected, we have $\R = [0,\pi]$.

Let  $\D_N\subset \M$ be a finite subset consisting of $N$ points (not necessary different).
The \emph{local discrepancy} of the subset $\D_N$ in the ball $B_r(y)$ is defined by
\begin{equation}
\Lambda[B_r(y),\D_N]=\#\{B_r(y)\cap \D_N\}-N v_r(y)
\\
=\sum\limits_{x\in \D_N} \Lambda (B_r(y),x),
\label{eq1.3}
\end{equation}  
where
\begin{equation}
\Lambda(B_r(y),x)=\chi(B_r(y),x)- v_r(y),
\label{eq1.4}
\end{equation}
and $\chi(\E,x)$ denotes the characteristic function of s subset $\E\subset \M$. 

The \emph{quadratic discrepancies} are defined by 
\begin{equation}
\la_r[\D_N]=\int\limits_{\M}\Lambda[B_r(y),\D_N]^2\,d\mu(y)
=\sum\limits_{x_1,x_2\in \D_N} \la_r(x_1,x_2),
\label{eq1.5}
\end{equation}
where 
\begin{equation}
\la_r(y_1,y_2)=\int\limits_{\M}\Lambda  (B_r(y),y_1)\Lambda 
(B_r(y),y_2) \, d\mu (y),
\label{eq1.6}
\end{equation}
and  
\begin{equation}
\la [\eta, \D_N] =\int\limits_\R\la_r [\D_N] \eta  (r) \, dr =
\sum\limits_{x_1,x_2\in \D_N} \la (\eta, x_1,x_2),
\label{eq1.7}
\end{equation}
where 
\begin{equation}
\la(\eta, y_1,y_2)=\int\limits_\R\la_r(y_1,y_2)\eta (r) \, dr,
\label{eq1.8}
\end{equation}
here $\eta (r)$, $r\in [0,\pi]$, is a non-negative weight function.
The quantities $\la_r[\D_N]^{1/2}$ and $\la[\eta, \D_N]^{1/2}$ are known as 
$L_2$-discrepancies. In the present paper it is more convenient to deal with 
the quadratic discrepancies \eqref{eq1.5} and \eqref{eq1.7}.

We introduce the following extremal characteristic  
\begin{equation}
\la_N(\eta) =\inf\limits_{\D_N} \la [\eta, \D_N],
\label{eq1.9}
\end{equation}
where the infimum is taken over all $N$-point subsets $\D_N\subset\M$.

In what follows, besides the original metric $\theta$ in the definition
of a compact metric space $\M$,  we shall deal with many different metrics 
on $\M$. For a metric $\rho$ on $\M$ we define 
the \emph{sum of pairwise distances}  
\begin{equation}
\rho [\D_N] =\sum\limits_{x_1,x_2\in \D_N} \rho (x_1,x_2),
\label{eq1.10}
\end{equation}
and introduce the following extremal characteristic 
\begin{equation}
\rho_N=\sup\limits_{\D_N}\rho [\D_N],
\label{eq1.11}
\end{equation}
where the supremum is taken over all $N$-point subsets $\D_N\subset \M$.
We also write $\langle \rho \rangle$ for the average value of a metric  $\rho$,       
\begin{equation}
\langle \rho \rangle =\iint\limits_{\M\times\M} \rho (y_1,y_2) \,
d\mu (y_1) \, d\mu (y_2).
\label{eq1.12}
\end{equation}

The study of the characteristics \eqref{eq1.9} and \eqref{eq1.11} falls within the subjects of the
discrepancy theory and geometry of distances. An extensive literature is devoted to
point distributions on spheres in the Euclidean space. Detailed surveys of the aria 
can be found in  \cite{2, 6, 10a, 12, 29}. 

It was shown in our recent paper \cite{30} that 
nontrivial results on the quantities \eqref{eq1.9} and \eqref{eq1.11} can be established for 
very general metric spaces. Some of these results are given below in 
Theorems~\ref{thm1.1} and 1.2 in the form adapted for use in the present paper. 

Introduce the following  \emph{symmetric difference metrics } on the space $\M$
\begin{equation}
\theta^{\Delta} (\eta, y_1,y_2)=\int\limits_\R\theta^{\Delta}_r
(y_1,y_2)\eta (r)\, dr,
\label{eq1.13}
\end{equation}
where 
\begin{align}
\theta^{\Delta}_r(y_1,y_2) &=\frac 12\mu (B_r(y_1)\Delta B_r (y_2))\notag
\\
&=\frac 12 \Big(v_r(y_1)+v_r(y_2)-2\mu (B_r(y_1)\cap B_r(y_2))\Big),
\label{eq1.14}
\end{align}
and 
$
B_r(y_1)\Delta B_r(y_2) = B_r(y_1)\cup B_r(y_2) \setminus 
B_2 (y_1)\cap B_r(y_2) 
$
is the symmetric difference of the balls  $B_r(y_1)$ and $B_r(y_2)$. We have
\begin{align}
&\chi(B_r(y_1)\Delta B_r (y_2),y) = 
\notag
\\
&\frac12 \Big( \chi(B_r(y_1),y) + \chi(B_r(y_2),y)-2\chi(B_r(y_1),y)\chi(B_r(y_2),y\Big) 
\notag
\\ 
&=|\chi (B_r(y_1),y)-\chi(B_r(y_2),y)|,
\label{eq1.15}
\end{align}
where we write $\chi(B_r(x),y)$ for the characteristic function of ball $B_r(x)$.
The symmetry of the metric $\theta$ implies the following useful relation   
\begin{equation}
\chi (B_r(y),x)=\chi (B_r(x),y)=\chi (r-\theta (x,y)) 
=\chi_r(\theta (x,y)),
\label{eq1.16}
\end{equation}
where $\chi(z)$, $z\in \bR$ is the characteristic function of the half-axis 
$(0,\infty)$, and $\chi_r(\cdot)$ is the characteristic function of
the interval $[0,r)$, $0\le r\le \pi$. From \eqref{eq1.14} and \eqref{eq1.15}, we obtain 
\begin{align}
&\theta^{\Delta}_r(y_1,y_2)  =\frac 12 \int\limits_{\M} \chi
(B_r(y_1)\Delta B_r (y_2)) \, d\mu (y)   \notag
\\
& =\frac 12  \int\limits_{\M} \Big(\chi(B_r(y_1),y) +\chi 
(B_r(y_2),y) -2\chi (B_r(y_1),y) \chi (B_r(y_2),y) \Big) \, d\mu (y) \notag
\\
& 
=\frac 12 \int\limits_{\M} |\chi (B_r(y_1),y)-\chi
(B_r(y_2),y)| \, d\mu  (y)
\label{eq1.17}
\end{align}
With the help of \eqref{eq1.16}, we obtain the following formulas for the average 
values \eqref{eq1.12} of metrics \eqref{eq1.13} and \eqref{eq1.17}  
\begin{align}
\langle \theta^{\Delta} (\eta)\rangle  = \int\limits_\R
\langle \theta^{\Delta}_r\rangle \eta (r) \, dr,
\label{eq1.18}
\end{align}
\begin{align}
\langle \theta^{\Delta}_r\rangle  = \iint\limits_{\M\times \M}
\theta^{\Delta}_r(y_1,y_2) \, d\mu  (y_1)\, d\mu (y_2) 
= \int\limits_{\M} (v_r(y)-v_r(y)^2)\, d\mu (y)
\label{eq1.19}
\end{align}

The symmetric difference of any two subsets coincides with
the symmetric difference of their complements, see \eqref{eq1.15}. Hence
\begin{align}
\theta^{\Delta}_r(y_1,y_2)&= \frac 12 \mu (B'_r(y_1)\Delta B'_r (y_2))  
\notag
\\
&=\frac 12 \Big(v'_r(y_1) +v'_r(y_2)-2\mu (B'_r(y_1) \cap B'_r(y_2))\Big ),
\label{eq1.20}
\end{align}
where $B'_r(y)=\M\setminus B_r(y)$ is the complement of the ball $B_r(y)$, 
\begin{equation}
v'_r(y)=\mu (B'_r(y)) =1-v_r (y), 
\label{eq1.21}
\end{equation}
and the relation \eqref{eq1.19} takes the form
\begin{equation}
\langle \theta^{\Delta}_r \rangle =
\int\limits_{\M} v_r(y)v'_r(y) \, d\mu (y)
\label{eq1.22}
\end{equation}
In \eqref{eq1.17} the balls $B_r(y)$ can be also replaced by their complements $B'_r(y)$. 

A metric space $\M$ is called \emph{distance-invariant}, if the volume  of any
ball $v_r=v_r(y)$ is independent of $y\in \M$, see \cite[p.~504]{24}.
For such spaces the above formulas for the discrepancies and the symmetric difference metrics 
can be essentially simplified. Substituting \eqref{eq1.14} into \eqref{eq1.6}, we obtain
\begin{align} 
\la_r(y_1,y_2)& =\int\limits_{\M}\chi (B_r(y_1),y) \chi
(B_r(y_2),y) \, d\mu (y) -v^2_r       \notag
\\
& = \mu (B_r(y_1)\cap B_r(y_2) -v^2_r,
\label{eq1.23}
\end{align}
and correspondingly, 
\begin{equation}
\la_r[\D_N]=\sum\limits_{y_1,y_2\in \D_N} \mu
(B_r(y_1)\cap B_r(y_2)) -v^2_rN^2.
\label{eq1.24}
\end{equation}
Similarly, the relations \eqref{eq1.14}, \eqref{eq1.20} and 
\eqref{eq1.19}, \eqref{eq1.22} take the form 
\begin{align}
&\theta^{\Delta}_r(y_1,y_2)  =  v_r-\int\limits_{\M} \chi
(B_r(y_1),y)  \chi (B_r(y_2),y)\, d\mu (y) 
\notag
\\
&= v_r-\mu (B_r(y_1)\cap B_r (y_2))             
= v'_r-\mu (B'_r(y_1)\cap B'_r(y_2)),
\label{eq1.25}
\end{align}
 
\begin{equation}
\langle  \theta^{\Delta}_r\rangle =v_r-v^2_r = v_r v'_r,
\label{eq1.26}
\end{equation}
and  
\begin{equation}
\theta^{\Delta}_r[\D_N] =v_rN^2  - \sum\limits_{y_1,y_2\in \D_N} \mu
(B_r(y_1)\cap B_r(y_2)).
\label{eq1.27}
\end{equation}
Integrating these relations with $\eta (r)$, $r\in [0,\pi]$, one can obtain 
the corresponding formulas for the quantities \eqref{eq1.13} and \eqref{eq1.18}.

The typical examples of distance-invariant spaces are homogeneous spaces $\M=G/K$, 
where $G$ is a compact group, $K\subset G$ is a closed subgroup, while $\theta$ and $\mu$ are 
$G$-invariant metric and measure on $\M$. In this case, the quantities (1.6), 
\eqref{eq1.8} and \eqref{eq1.13}, \eqref{eq1.14} are also $G$-invariant:
\begin{equation}
\left. 
\begin{aligned}
\la_r(gy_1,gy_2) & \!=\la_r(y_1,y_2), \, \,
\la(\eta, gy_1, gy_2)= \la (\eta, y_1,y_2),
\\
\theta^{\Delta}_r(gy_1, gy_2) & \!=\! \theta^{\Delta}_r
(gy_1,gy_2),  \,\, \theta^{\Delta} (\eta,gy_1,gy_2) \!=\!\theta^{\Delta} 
(\eta,y_1,y_2),
\\
\mu (B_r(gy_1) & \cap B_r (gy_2))  \! =\! \mu (B_r(y_1)\cap B_r (y_2)),
\end{aligned}
\right\}
\label{eq1.28}
\end{equation}
for all $y_1,y_2\in G/K$,  $g\in G$.

Comparing the relations \eqref{eq1.23}--\eqref{eq1.27}, we arrive to the following
result. This result and its generalizations were given in \cite[Theorems~2.1,~3.1]{30}.

\begin{theorem}\label{thm1.1}(The $L_1$-invariance principles).
Let a compact connected metric space $\M$ with a metric $\theta$
and a measure $\mu$  be distance-invariant. Then we have
\begin{align}
\la_r(y_1,y_2)+\theta^{\Delta}_r(y_1,y_2) & = \langle 
\theta^{\Delta}_r\rangle, 
\label{eq1.29}
\\
\la (\eta,y_1,y_2)+\theta^{\Delta}(\eta ,y_1,y_2) & = \langle 
\theta^{\Delta} (\eta)\rangle ,
\label{eq1.30}
\\
\la (\eta,\D_N)+\theta^{\Delta}(\eta , \D_N) & = \langle 
\theta^{\Delta} (\eta)\rangle  N^2 ,
\label{eq1.31}
\\
\la_N (\eta)+\theta^{\Delta}_N(\eta) & = \langle 
\theta^{\Delta} (\eta)\rangle N^2.
\label{eq1.32}
\end{align}
Here $r\in \R = [0,\pi]$ and $\D_N\subset \M$ is an arbitrary $N$-point subset. 
The equalities \eqref{eq1.30}, \eqref{eq1.31} and \eqref{eq1.32} hold 
with any weight function $\eta$ such that the integrals  \eqref{eq1.7}, \eqref{eq1.8}
and \eqref{eq1.13}, \eqref{eq1.18} converge.  
\end{theorem}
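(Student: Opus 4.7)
The plan is to observe that Theorem~\ref{thm1.1} is essentially an assembly result: all the nontrivial content sits in the distance-invariant simplifications \eqref{eq1.23}, \eqref{eq1.25}, \eqref{eq1.26} that were carried out just before the statement. I would prove the four identities in order, with \eqref{eq1.29} doing the real work and the remaining three following by linearity.

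For \eqref{eq1.29}, I add \eqref{eq1.23} and \eqref{eq1.25}. On the right-hand side the terms $\mu(B_r(y_1)\cap B_r(y_2))$ appear with opposite signs and cancel, leaving $v_r - v_r^2$. By \eqref{eq1.26} this equals $\langle\theta^\Delta_r\rangle$, which is \eqref{eq1.29}. The essential use of distance-invariance is precisely that $v_r(y)$ is independent of $y$, so the one-point volumes appearing in \eqref{eq1.23} and \eqref{eq1.25} agree and can be combined into the average $\langle\theta^\Delta_r\rangle$.

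For \eqref{eq1.30}, I multiply \eqref{eq1.29} by the nonnegative weight $\eta(r)$ and integrate over $r\in\R=[0,\pi]$. The convergence of \eqref{eq1.7}, \eqref{eq1.8} and \eqref{eq1.13}, \eqref{eq1.18} justifies swapping sums with integrals, and the definitions \eqref{eq1.8}, \eqref{eq1.13}, \eqref{eq1.18} identify the three terms. For \eqref{eq1.31}, I sum \eqref{eq1.30} over all ordered pairs $(x_1,x_2)\in\D_N\times\D_N$; by \eqref{eq1.7} and \eqref{eq1.10} (with $\rho=\theta^\Delta(\eta)$) the left side becomes $\la(\eta,\D_N)+\theta^\Delta(\eta,\D_N)$, while the right side is the constant $\langle\theta^\Delta(\eta)\rangle$ summed $N^2$ times.

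Finally, \eqref{eq1.32} is a one-line consequence of \eqref{eq1.31}. Since the right-hand side of \eqref{eq1.31} depends only on $N$ and not on the chosen configuration $\D_N$, the map $\D_N\mapsto \la(\eta,\D_N)$ is pointwise the constant $\langle\theta^\Delta(\eta)\rangle N^2$ minus $\D_N\mapsto \theta^\Delta(\eta,\D_N)$; taking infimum on the left and the corresponding supremum on the right (per \eqref{eq1.9} and \eqref{eq1.11}) yields \eqref{eq1.32}. There is no real obstacle here: the only content beyond algebraic bookkeeping is the distance-invariance hypothesis, already exploited in deriving \eqref{eq1.23} and \eqref{eq1.25}.
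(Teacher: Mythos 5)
Your proposal is correct and follows essentially the same route as the paper, which derives Theorem~\ref{thm1.1} precisely by comparing the distance-invariant formulas \eqref{eq1.23}--\eqref{eq1.27}: adding \eqref{eq1.23} and \eqref{eq1.25} cancels the intersection term and leaves $v_r-v_r^2=\langle\theta^\Delta_r\rangle$ by \eqref{eq1.26}, after which integration in $r$, summation over pairs, and the inf/sup duality give \eqref{eq1.30}--\eqref{eq1.32}. Nothing is missing.
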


Obviously, the integrals \eqref{eq1.7}, \eqref{eq1.8} and \eqref{eq1.13}, \eqref{eq1.18}
converge for any wight function $\eta$ summable on the interval $[0,\pi]$.
More general conditions of convergence of these integrals for two-point homogeneous 
spaces will be given in Lemma 2.1 below. Notice that the assumption of connectedness of
the space $\M$ in Theorem 1.1 is of no concern, and the measure $\eta (r) \, dr$
in the definitions \eqref{eq1.8} and \eqref{eq1.13}
can be replaced with a measure $d\xi(r)$ on the set of radii $\R$, see \cite[Theorems.~2.1]{30}

The $L_2$-invariance principle, specific for two-point homogeneous spaces, 
will be given in the next section, see Theorem 2.1 and Corollary 2.1. 
Our terminology of $L_1$- and $L_2$-invariance principles will be explained 
in the comments to Corollary 2.1.

To state a further result from \cite{30} we recall the concept of rectifiable spaces,
see \cite{28}.
A compact metric space $\M$ with a metric $\theta$ and a measure $\mu$ is called $d$-rectifiable 
if there exist a measure $\nu$ on the $d$-dimensional unit cube $I^d=[0,1]^d$ absolutely 
continuous with respect to the $d$-dimensional Lebesgue measure on~$I^d$, a measurable subset 
$\O\subset I^d$, and an injective Lipschitz mapping $f:\O\to\M$, such that
$\mu(\M\setminus f(\O))=0$; and
$\mu(\E)=\nu(f^{-1}(\E\cap f(\O))$ for any $\mu$-measurable subset $\E\subset\M$.
Recall that a map $f:\O\subset\bR^d\to\M$ is Lipschitz if
\begin{equation}
\theta(f(Z_1),f(Z_2))\le c\Vert Z_1-Z_2\Vert,
\quad
Z_1,Z_2\in\O,
\label{eq1.33}
\end{equation}
with a positive constant~$c$, and the smallest such constant is called the Lipschitz 
constant of $f$ and denoted by $\Lip(f)$; in \eqref{eq1.33} $\Vert\cdot\Vert$ denotes 
the Euclidean norm in~$\bR^d$.

Notice that any smooth (or piece-wise smooth) compact $d$-dimensional manifold is $d$-rectifiable if in the local coordinates the metric satisfies \eqref{eq1.33}, and the measure is absolutely continuous with respect to the $d$-dimensional Lebesgue measure.
Particularly, any compact $d$-dimensional Riemannian manifold with the geodesic metric $\theta$ and the Riemannian measure $\mu$ is $d$-rectifiable.
In this case, it is known that the condition \eqref{eq1.33} holds; see ~\cite[Chapter~I, Proposition 9.10]{21}.
On the other hand, the condition on the Riemannian measure is obvious because the metric tensor is continuous. 

The following result was established in \cite[Theorem.4.2]{30}. Notice that the proof of this
result is relying on a probabilistic version of Theorem 1.1, see \cite[Theorem 3.1]{30}.

\begin{theorem}\label{thm1.2}
Suppose that a compact metric space $\M$, with a metric $\theta$ and a measure~$\mu$, is $d$-rectifiable.
Write $C=d2^{d-1}\Lip(f)$, where $\Lip(f)$ is the Lipschitz constant of the map $f$ in the definition of $d$-rectifiability of the space $\M$.
Then the following hold:

{\rm(i)} If a metric $\rho$ on $\M$ satisfies the inequality
\begin{equation}
\rho(x_1,x_2)\le c_0\theta(x_1,x_2)
\label{eq1.34}
\end{equation}
with a constant $c_0>0$, then
\begin{equation}
\rho_N\ge\langle\rho\rangle N^2-c_0CN^{1-\frac 1d}.
\label{eq1.35}
\end{equation}

{\rm(ii)} If the metric $\theta^\Delta(\eta)$ satisfies the inequality
\begin{equation}
\theta^\Delta(\eta,x_1,x_2)\le c_0\theta(x_1,x_2)
\label{eq1.36}
\end{equation}
with a constant $c_0>0$, then
\begin{equation}
\theta^\Delta_N(\eta)\ge\langle\theta^\Delta(\eta)\rangle N^2-c_0CN^{1-\frac 1d}
\label{eq1.37}
\end{equation}
and
\begin{equation}
\lambda_N(\eta)\le c_0CN^{1-\frac 1d}.
\label{eq1.38}
\end{equation}

Particularly, the above statements are true for a compact Riemannian 
manifold with the geodesic distance $\theta$ and the Riemannian measure $\mu$.
\end{theorem}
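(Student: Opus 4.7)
I propose a probabilistic proof via jittered random sampling, based on an equipartition of $\M$ and on the probabilistic version of the $L_1$-invariance principle.

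The first step is an \emph{equipartition lemma}: using the Lipschitz map $f:\O\to\M$ from the definition of $d$-rectifiability, I would partition $I^d$ into $N$ axis-aligned boxes of equal $\nu$-measure $1/N$ by recursive round-robin bisection along the coordinate directions; the pushforward by $f$ then gives a partition $\M=\bigsqcup_{i=1}^N A_i$ with $\mu(A_i)=1/N$ and $\diam(A_i,\theta)\le C N^{-1/d}$, where $C=d\,2^{d-1}\Lip(f)$. The combinatorial constant reflects the rounding of $N$ to the nearest power of $2^d$, the dimension factor relating the $\ell^\infty$ and Euclidean diameters of a box, and the Lipschitz constant converting Euclidean to $\theta$-diameter through \eqref{eq1.33}.

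For (i), draw $x_i\in A_i$ independently with normalized law $N\mu|_{A_i}$. Since $\rho$ is a metric, $\rho(x_i,x_i)=0$; for $i\ne j$ the $x_i,x_j$ are independent, so
\begin{equation*}
\bE[\rho[\D_N]]=N^2\sum_{i\ne j}\iint_{A_i\times A_j}\rho(y_1,y_2)\,d\mu(y_1)\,d\mu(y_2)=\langle\rho\rangle N^2-N^2\sum_i\iint_{A_i\times A_i}\rho\,d\mu\,d\mu.
\end{equation*}
By \eqref{eq1.34} and the equipartition, the correction is bounded by $c_0 N^2\sum_i\mu(A_i)^2\diam(A_i,\theta)\le c_0 C N^{1-1/d}$, so $\rho_N\ge\bE[\rho[\D_N]]$ yields \eqref{eq1.35}. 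Applying the same estimate with $\rho=\theta^\Delta(\eta)$, which is a metric on $\M$ satisfying \eqref{eq1.36} by hypothesis, proves \eqref{eq1.37}.

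For the remaining bound \eqref{eq1.38}, I would invoke the probabilistic version of the $L_1$-invariance principle from \cite[Theorem~3.1]{30}: decomposing $\la_r+\theta^\Delta_r$ as a sum $\alpha(y_1,r)+\alpha(y_2,r)+\beta(r)$ of single-point terms, one obtains, for the jittered construction above and \emph{without} distance invariance of $\M$,
\begin{equation*}
\bE\bigl[\la[\eta,\D_N]+\theta^\Delta[\eta,\D_N]\bigr]=\langle\theta^\Delta(\eta)\rangle N^2.
\end{equation*}
Combined with the expected version of the bound just obtained for $\theta^\Delta(\eta)$, this yields $\bE[\la[\eta,\D_N]]\le c_0 C N^{1-1/d}$, and then $\la_N(\eta)\le\bE[\la[\eta,\D_N]]$ gives \eqref{eq1.38}. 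The main obstacle is the equipartition lemma itself: for an arbitrary absolutely continuous $\nu$ on $I^d$, bisection at $\nu$-medians can produce boxes of highly unequal Euclidean sizes, so one must pass to the support of $\nu$ and track how $\O$ meets each cell to obtain the universal diameter bound with $C=d\,2^{d-1}\Lip(f)$; once this partition is in hand and the probabilistic $L_1$-invariance is available, the rest is routine linearity of expectation together with the trivial monotone inequalities $\rho_N\ge\bE[\rho[\D_N]]$ and $\la_N(\eta)\le\bE[\la[\eta,\D_N]]$.
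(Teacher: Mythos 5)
Your route is the paper's route: Theorem~\ref{thm1.2} is quoted from \cite[Theorem 4.2]{30}, and the paper states explicitly that that proof rests on a probabilistic version of Theorem~\ref{thm1.1} --- i.e.\ precisely your combination of jittered sampling over an equal-measure box partition of $I^d$ with the expected $L_1$-invariance identity $\bE[\la[\eta,\D_N]]+\bE[\theta^\Delta[\eta,\D_N]]=\langle\theta^\Delta(\eta)\rangle N^2$. Your expectation computation for $\bE[\rho[\D_N]]$, the reduction of \eqref{eq1.38} to \eqref{eq1.37} via that identity, and the inequalities $\rho_N\ge\bE[\rho[\D_N]]$, $\la_N(\eta)\le\bE[\la[\eta,\D_N]]$ are all correct.

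The gap is in your equipartition lemma, and it is not a bookkeeping issue you can ``track'' away: for a general absolutely continuous $\nu$ on $I^d$ there is \emph{no} partition into $N$ boxes of $\nu$-measure $1/N$ each of Euclidean diameter $O(N^{-1/d})$. Take $\nu$ with density $\varepsilon^{-1}$ on $[0,\varepsilon]\times[0,1]^{d-1}$ and $0$ elsewhere: whichever cell contains points with first coordinate near $1$ must reach back into the slab to collect measure $1/N$, so its diameter is close to $1$ no matter how large $N$ is. Individual diameter bounds of order $N^{-1/d}$ require something like Ahlfors regularity of the measure, which $d$-rectifiability does not provide --- this is exactly why the paper introduces the separate notion \eqref{eq3.10a} and cites \cite{19a} for it, rather than getting it for free from Theorem~\ref{thm1.2}. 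Fortunately your final estimate never uses the individual bound: after inserting $\mu(A_i)=1/N$, the correction term is $c_0\sum_i\diam(A_i,\theta)\le c_0\Lip(f)\sum_i\diam(Q_i)$, so all you need is the bound on the \emph{sum} of diameters, $\sum_{i=1}^N\diam(Q_i)\le d\,2^{d-1}N^{1-\frac1d}$. That is what the slicing construction actually delivers: cut $I^d$ into $N_1\sim N^{1/d}$ slabs of equal $\nu$-measure along the first coordinate, each slab into $N_2\sim N^{1/d}$ boxes along the second, and so on with $N=N_1\cdots N_d$; bounding $\diam(Q)$ by the sum of the side lengths of $Q$, the total contribution of the $j$-th coordinate over all boxes telescopes to $N/N_j\lesssim N^{1-1/d}$ (the lengths of the $j$-th sides within one slab sum to at most $1$), and the factor $2^{d-1}$ absorbs the rounding of $N$ to such a product. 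Note that this matches the constant $C=d\,2^{d-1}\Lip(f)$ in the statement exactly. Restate your lemma in this summed form and the rest of your argument goes through verbatim.
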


Under such general assumptions one cannot expect that the bounds \eqref{eq1.37} 
and \eqref{eq1.38} are best possible. 
Consider, for example, the $d$-dimensional unit spheres $S^d=\{x\in\bR^{d+1}:\Vert x\Vert=1\}$ with the geodesic (great circle) metric $\theta$ and the standard Lebesgue measure $\mu$ on $S^d$.
In this case, we have
\begin{equation}
\theta_N=\langle\theta\rangle N^2-\varepsilon_N,
\quad
\langle\theta\rangle=\pi/2 ,
\label{eq1.39}
\end{equation}
where $\varepsilon_N=0$ for even $N$ and $\varepsilon_N\le\pi/2$ for odd~$N$.

The appearance of such anomalously small errors in the formula \eqref{eq1.39}
can be easily explained with the help of invariance principle \eqref{eq1.29}.  
We shall discuss this question in section~\ref{sec12}.


In the present paper we shall show that the bounds \eqref{eq1.37} and \eqref{eq1.38} are best possible
for compact connected two-point spaces and general classes of weight functions $\eta$,
see Theorem 2.2 below.

\section{Statements of the main results}\label{sec2}

First of all we 
recall the definition and 
some necessary  facts on two-point homogeneous spaces,
see \cite{7, 21, 22, 36, 37}. Additional facts on the geometry and harmonic analysis
on such spaces will be given in sections 4 and 7.
Let $G=G(\M)$ be the group of isometries of a metric space $\M$ with 
a metric $\theta$, {\it i.e.} $\theta(gx_1,gx_2)=\theta(x_1,x_2)$ for all 
$x_1$, $x_2\in \M$ and $g\in G$. The space $\M$ is called {\it 
two-point homogeneous}, if for any two pairs of points $x_1$, $x_2$ and 
$y_1$, $y_2$ with $\theta(x_1,x_2)=\theta(y_1,y_2)$ there exists an isometry 
$g\in G$, such that $y_1=gx_1$, $y_2=gx_2$. In this case, the group $G$ is obviously
transitive on $\M$ and $\M=G/K$ is a homogeneous space, where
the subgroup $K\subset G$ is the stabilizer of a point $x_0\in \M$. 
Furthermore, the homogeneous space $\M$ is symmetric, {\it i.e.} for any two points 
$y_1$, $y_2\in \M$ there exists an isometry $g\in G$, such that $gy_1=y_2$, $gy_2=y_1$.

We consider compact connected two-point homogeneous spaces $\M=G/K$. For such spaces
$G$ and $K\subset G$ are Lie groups and $\M=G/K$ are Riemannian symmetric spaces of rank one. This means that all flat totally geodesic submanifolds in $\M$ are 
one dimensional and coincide with geodesics. This also means that all $G$-invariant
differential operators on $\M$ are polynomials of the Laplace-Beltrami operator
on $\M$.
All such spaces are classified completely, see, for example,~\cite[Sec.~8.12]{36}. They are the following:

(i) The $d$-dimensional Euclidean spheres 
$S^d=SO(d+1)/SO(d)\times 
\{1\}$, $d\ge 2$, and $S^1=O(2)/O(1) \times \{ 1\}$. 

(ii) The real projective spaces $\bR P^n=O(n+1)/O(n)\times O(1)$.

(iii) The complex projective spaces $\C P^n=U(n+1)/U(n)\times U(1)$.

(iv) The quaternionic projective spaces $\H P^n=Sp(n+1)/SP(n)\times Sp(1)$,

(v) The octonionic projective plane $\bO P^2=F_4/\Spin (9)$.

Here we use the standard notation from the theory of Lie groups; particularly,  
$F_4$ is one of the exceptional Lie groups in Cartan's classification. 

The indicated projective spaces $ \F P^n $ as compact Riemannian manifolds have dimensions $d$, 
\begin{equation}
d=\dim_{\bR} \F P^n=nd_0, \ d_0=\dim_{\bR}\F,
\label{eq2.1} 
\end{equation}
where $d_0=1,2,4,8$ for $\F=\bR$, $\C$, $\H$, $\bO$, correspondingly.

For spheres $S^d$ we put $d_0=d$ by definition. Projective spaces 
of dimension  $d_0$ ($n=1$) are isomorphic to the spheres $S^{d_0}$:
$\bR P^1 \approx S^1, \C P^1 \approx S^2,  \H P^1 \approx S^4, \bO P^1 \approx S^8$.
We can conveniently agree that $d>d_0$ ($n\ge 2)$ for projective spaces,
while the equality $d=d_0$ holds only for spheres. Under this convention,
the dimensions $d=nd_0$ and $d_0$ define uniquely (up to isomorphism) 
the corresponding two-point homogeneous space which we denote by $Q=Q(d,d_0)$.
We write  $\mu$ for the $G$-invariant Riemannian measure
on $Q(d,d_0)$ normalized by $\mu(Q(d,d_0)) = 1$ and $\theta$ for the metric
proportional to the corresponding $G$-invariant Riemannian distance on $Q(d,d_0)$ with
the coefficient of proportionality defined by $\diam(\theta, Q(d,d_0)) = \pi $,
see \eqref{eq1.1}.
In what follows we always assume that $n=2$ if $\F=\bO$, since  
projective spaces $\bO P^n$ do not exist for $n>2$. In more detail the geometry 
of spaces  $\F P^n$ will be outlined in section 4.

Any space $Q(d,d_0)$ is distance-invariant and the volume of balls is given by 
\begin{equation}
\left.
\begin{aligned}
&v_r=\kappa(d,d_0)\int\limits^r_0(\sin\frac{1}{2}u)^{d-1}(\cos \frac{1}{2}u)^{d_0-1}\,du, 
\quad r\in [0,\pi],
\\
&\kappa(d,d_0)=B(d/2,d_0/2)^{-1}=\frac{\Gamma(d/2+d_0/2)}{\Gamma(d/2)\Gamma(d_0/2)}.
\end{aligned}
\right\}
\label{eq2.2}
\end{equation}
Here $B(\cdot ,\cdot)$ and $\Gamma(\cdot)$ are the beta and gamma functions, and 
$v_{\pi}=\mu(Q(d,d_0))=1$. Different equivalent forms of the relation \eqref{eq2.2}
can be found in the literature, see \cite[pp.~177--178]{19}, \cite[pp.~165--168]{22},  
\cite[pp.~508--510]{24}.

From the formula \eqref{eq2.2} we obtain the following two-side bounds 
\begin{equation}
v_r\simeq r^d, \ v'_r=1-v_r\simeq (\pi-r)^{d_0}, \ r\in [0,\pi].
\label{eq2.3}
\end{equation}

To simplify notation we write in some formulas $A\lesssim B$ instead of 
$B=O(A)$, $A\gtrsim B$
instead of $B=O(A)$, and $A\simeq B$ if $A=O(B)$ and $B=O(A)$.

The {\it chordal metric} on the spaces $Q(d,d_0)$ can be defined by
\begin{equation}
\tau(x_1,x_2)=\sin \frac{1}{2}\theta(x_1,x_2), \quad x_1,x_2\in Q(d,d_0).
\label{eq2.4}
\end{equation}
Notice that the expression \eqref{eq2.4} defines a metric because the function 
$\varphi(\theta)=\sin \theta/2$, $0\le \theta\le \pi$, 
is concave, increasing and  $\varphi(0)=0$, that implies the triangle 
inequality.
For the sphere $S^d=\{x\in \bR^{d+1}:\|x\|=1\}$ we have
\begin{equation}
\left.\begin{aligned}
&\cos \theta(x_1,x_2)=(x_1,x_2), x_1,x_2\in S^d,
\\
&\tau(x_1,x_2)=\sin \frac12\theta(x_1,x_2)=\frac{1}{2}\|x_1-x_2\|,
\end{aligned}
\right\}
\label{eq2.5}
\end{equation}
where $(\cdot,\cdot)$ is the inner product and $\|\cdot \|$ is the Euclidean  
norm in  $\bR^{d+1}$.

Each projective space $\F P^n$ can be canonically imbedded into the unit sphere
\begin{equation}
\Pi:Q(d,d_0)\ni x\to \Pi(x)\in S^{m-1}\subset \bR^m, \ 
m=\frac{1}{2}(n+1)(d+2),
\label{eq2.6}
\end{equation}
such that
\begin{equation}
\tau(x_1,x_2)=\frac{1}{\sqrt{2}}\|\Pi(x_1)-\Pi(x_2)\|, \quad x_1,x_2\in \F 
P^n,
\label{eq2.7}
\end{equation}
where $\|\cdot \|$ is the Euclidean norm in $\bR^{m+1}$.
Hence, the metric $\tau(x_1,x_2)$ coincides with the Euclidean length of a segment
joining the corresponding points $\Pi(x_1)$ and $\Pi(x_2)$ on the unit sphere 
and normalized by $\diam (Q(d,d_0),\tau)=1$.
The imbedding \eqref{eq2.6} will be described explicitly in Section~\ref{sec5}.

The chordal metric $\tau$ on the complex projective space $\C P^n$ 
is known as the Fubini--Study metric. 
In connection with special point configurations in two-point homogeneous spaces
the chordal metric on projective spaces has been discussed in the papers \cite{13, 14},
see also the paper \cite{15}, where the chordal metric has been defined for  
Grassmannian manifolds.

Now we are in position to state our main results. First of all, we consider
the $L_2$-invariance principles for two-point homogeneous spaces. 
A careful analysis of the imbedding \eqref{eq2.6} leads to the following.

\begin{theorem}\label{thm2.1}
For any space $Q=Q(d,d_0)$ the chordal metric
\eqref{eq2.4} and the symmetric difference metric \eqref{eq1.13} are related by
\begin{equation}
\tau(x_1,x_2)=\gamma(Q)\,\, \theta^{\Delta}(\eta^{\natural} ,x_1,x_2), \ 
x_1,x_2\in Q,
\label{eq2.8}
\end{equation}
where $\eta^{\natural}(r)=\sin r$, $r\in [0,\pi]$, and
\begin{equation}
\gamma(Q)=\frac{\langle \tau\rangle}{\langle 
\theta^{\Delta}(\eta^{\natural})\rangle}=\frac{\diam 
(Q,\tau)}{\diam(Q,\theta^{\Delta}(\eta^{\natural}))}.
\label{eq2.9}
\end{equation}
\end{theorem}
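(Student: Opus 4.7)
The plan is to reduce $\theta^\Delta(\eta^\natural, x_1, x_2)$ to a purely Euclidean integral via the canonical embedding $\Pi$, and then exploit the orbit structure under the isometry group $G$. The weight $\eta^\natural(r) = \sin r$ is chosen precisely so that after integrating out $r$, one lands on $|\cos\theta(x_1,y) - \cos\theta(x_2,y)|$, which via $\Pi$ becomes linear in the Euclidean difference $\Pi(x_1) - \Pi(x_2)$.

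More concretely, I would first combine \eqref{eq1.13}, \eqref{eq1.16}, \eqref{eq1.17} and swap the order of integration:
\begin{equation*}
\theta^\Delta(\eta^\natural, x_1, x_2) = \frac{1}{2}\int_Q \int_0^\pi |\chi_r(\theta(x_1,y)) - \chi_r(\theta(x_2,y))|\sin r\,dr\,d\mu(y).
\end{equation*}
For fixed $y$ with $a = \theta(x_1,y)$, $b = \theta(x_2,y)$, the indicator difference is supported on the interval between $a$ and $b$, so the inner integral equals $|\cos a - \cos b|$. Thus
\begin{equation*}
\theta^\Delta(\eta^\natural, x_1, x_2) = \tfrac{1}{2}\int_Q |\cos\theta(x_1,y)-\cos\theta(x_2,y)|\,d\mu(y).
\end{equation*}
From \eqref{eq2.4} and \eqref{eq2.7}, $2\tau^2 = \|\Pi(x_1)-\Pi(x_2)\|^2 = 2 - 2(\Pi(x_1),\Pi(x_2))$, which gives $\cos\theta(x,y) = 2(\Pi(x),\Pi(y)) - 1$. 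Substituting turns the identity into
\begin{equation*}
\theta^\Delta(\eta^\natural, x_1, x_2) = \int_Q |(\Pi(x_1) - \Pi(x_2), \Pi(y))|\,d\mu(y).
\end{equation*}

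The remaining step is to show this last expression is proportional to $\|\Pi(x_1)-\Pi(x_2)\|$, which equals $\sqrt{2}\,\tau(x_1,x_2)$. Here I would use the explicit form of $\Pi$ described in Section 4: for the sphere, $\Pi$ is the identity and $Z := \Pi(x_1)-\Pi(x_2) = x_1 - x_2$ is any vector in $\mathbb{R}^{d+1}$ of the prescribed length, with all such vectors in a single $SO(d+1)$-orbit. For the projective spaces $\mathbb{F}P^n$, the embedding is $x \mapsto xx^*$ into the space of rank-one projections, so $Z = x_1x_1^* - x_2x_2^*$ is a traceless rank-$2$ Hermitian (or Jordan) matrix; a short spectral computation shows its nonzero eigenvalues are $\pm\tau(x_1,x_2)$. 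Thus $Z = \tau(x_1,x_2)W$ where $W$ has fixed spectrum $\{+1,-1,0,\ldots,0\}$, and such $W$'s form a single $G$-orbit under the conjugation action. Since $\mu$ and $\Pi$ are $G$-equivariant, $\int_Q |(W,\Pi(y))|\,d\mu(y) = C$ is constant on this orbit. Therefore $\theta^\Delta(\eta^\natural, x_1, x_2) = C\,\tau(x_1, x_2)$, establishing \eqref{eq2.8} with $\gamma(Q) = 1/C$; the explicit form of $\gamma(Q)$ in \eqref{eq2.9} follows by integrating both sides of \eqref{eq2.8} against $d\mu(x_1)d\mu(x_2)$, or by noting that both functions depend only on $\theta(x_1,x_2)$, so equal diameter ratios also yield the constant.

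The main obstacle is the orbit analysis in the last step for the octonionic plane $\mathbb{O}P^2$, since non-associativity prevents the naive matrix description. I expect one must use the Jordan-algebra model (the Albert algebra of $3\times 3$ Hermitian octonionic matrices, acted on by $F_4$) and verify that differences of rank-one trace-one idempotents at distance $\theta$ still have spectrum $\{+\tau, -\tau, 0\}$ and form a single $F_4$-orbit up to scaling. The sphere and real/complex/quaternionic cases are essentially linear algebra, but the octonionic case requires the machinery developed in Section~4.
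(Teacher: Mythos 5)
Your reduction of $\theta^{\Delta}(\eta^{\natural},x_1,x_2)$ to $\int_Q |(\Pi(x_1)-\Pi(x_2),\Pi(y))|\,d\mu(y)$ is exactly the content of the paper's Lemma~\ref{lem6.1} combined with the definition \eqref{eq5.23} of the chordal metric, and your treatment of the sphere and of the associative projective spaces (diagonalize $\Pi(x_1)-\Pi(x_2)$ on the span of $x_1,x_2$, get eigenvalues $\pm\tau$, invoke transitivity of $G$ on antipodal pairs) is correct. The derivation of \eqref{eq2.9} by integrating \eqref{eq2.8} against $d\mu\times d\mu$ or evaluating at antipodal points also matches the paper. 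The one genuine gap is the one you flag yourself: for $\bO P^2$ the claims that $\Pi(x_1)-\Pi(x_2)$ has spectrum $\{+\tau,-\tau,0\}$ and that the normalized differences form a single $F_4$-orbit are left ``to verify,'' and the naive spectral argument you use for $\F=\bR,\C,\H$ does not transfer, precisely because of non-associativity; a proof along your lines would require developing the spectral theory of the Albert algebra, which is real extra work.

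The paper closes this by reordering your last two steps. Instead of diagonalizing $\Pi(x_1)-\Pi(x_2)$ for arbitrary $x_1,x_2$ and only then invoking transitivity, it first uses two-point homogeneity to move the pair onto the standard real projective line $\bR P^1\subset\F P^n$, see \eqref{eq5.17}--\eqref{eq5.20}, i.e.\ to $\Pi_1=Z(v)$, $\Pi_2=Z(-v)$ with $\theta(\Pi_1,\Pi_2)=4v$. There the difference is an explicit real $2\times 2$ computation,
\begin{equation*}
Z(v)-Z(-v)=\sin 2v\,(Z_+-Z_-)=\tau(\Pi_1,\Pi_2)\,(Z_+-Z_-),
\end{equation*}
with $Z_\pm$ a fixed antipodal pair; this is valid verbatim for $\bO P^2$ because only real matrix entries occur. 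If you adopt that ordering, your argument goes through uniformly in all cases with no Jordan-algebra spectral theory: the only facts needed about $\bO P^2$ are that it is two-point homogeneous and that $\bR P^1$ embeds in it as a geodesic, both supplied in Section~\ref{sec5}. As written, your proof is complete for $S^d$ and for $\F P^n$ with $\F=\bR,\C,\H$, but the octonionic case remains a sketch.
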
 
              
The proof of Theorem~\ref{thm2.2} is given in Section~\ref{sec6}.
It is clear that the equalities \eqref{eq2.9} follow immediately from \eqref{eq2.8}. 
It suffices to calculate the average values \eqref{eq1.12} of both metrics 
in \eqref{eq2.8} to obtain 
the first equality in \eqref{eq2.9}. Similarly, writing \eqref{eq2.8} for any pair of 
antipodal points $x_1$, $x_2$, $\theta(x_1,x_2) = \pi$, we obtain the second 
equality in \eqref{eq2.9}. Recall that points $x_1, x_2$ are antipodal for a metric
$\rho$ if $\rho(x_1,x_2) = \diam(Q,\rho)$. If points $x_1, x_2$ are antipodal for
the metric $\theta$, then in view of \eqref{eq2.4} and \eqref{eq2.8} they are also
antipodal for the metrics $\tau$ and $\theta^{\Delta}(\eta^{\natural})$.

Comparing Theorems~1.1 and 2.1, we arrive at the following.

\begin{corollary}\label{cor2.1}
(The $L_2$-invariance principle). For 
any space $Q=Q(d,d_0)$ we have the relation
\begin{equation}
\gamma(Q)\lambda [\eta^{\natural},\D_N]+\tau [\D_N]=\langle \tau\rangle 
N^2,
\label{eq2.10}
\end{equation}
where $\D_N\subset Q$ is an arbitrary $N$-point subset.

Particularly, for any $N$ we have the equality 
\begin{equation}
\gamma(Q)\lambda_N(\eta^{\natural})+\tau_N=\langle \tau\rangle N^2.
\label{eq2.11}
\end{equation}
\end{corollary}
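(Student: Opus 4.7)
The plan is to derive Corollary~\ref{cor2.1} as an immediate consequence of Theorem~\ref{thm1.1} (specifically the identity \eqref{eq1.31}) and Theorem~\ref{thm2.1}, with no further geometric input needed. All two-point homogeneous spaces $Q(d,d_0)$ are distance-invariant (as homogeneous spaces $G/K$ with $G$-invariant metric and measure), so the hypotheses of Theorem~\ref{thm1.1} are in force, and the weight $\eta^{\natural}(r)=\sin r$ is continuous and hence summable on $[0,\pi]$, so the convergence proviso for \eqref{eq1.31} is met.

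First, I would apply Theorem~\ref{thm2.1} pointwise. Summing the identity $\tau(x_1,x_2)=\gamma(Q)\,\theta^{\Delta}(\eta^{\natural},x_1,x_2)$ over all $N^2$ ordered pairs $(x_1,x_2)\in\D_N\times\D_N$ and invoking the definitions \eqref{eq1.7} and \eqref{eq1.10}, I obtain
\begin{equation*}
\tau[\D_N]=\gamma(Q)\,\theta^{\Delta}(\eta^{\natural},\D_N).
\end{equation*}
Integrating the same pointwise identity against $d\mu(y_1)\,d\mu(y_2)$ and recalling \eqref{eq1.12} yields $\langle\tau\rangle=\gamma(Q)\langle\theta^{\Delta}(\eta^{\natural})\rangle$, which is just the first equality in \eqref{eq2.9}.

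Next, I would invoke the $L_1$-invariance principle \eqref{eq1.31} with the specific weight $\eta=\eta^{\natural}$:
\begin{equation*}
\lambda(\eta^{\natural},\D_N)+\theta^{\Delta}(\eta^{\natural},\D_N)=\langle\theta^{\Delta}(\eta^{\natural})\rangle N^2.
\end{equation*}
Multiplying through by $\gamma(Q)$ and substituting the two identities from the previous step turns the middle summand into $\tau[\D_N]$ and the right-hand side into $\langle\tau\rangle N^2$, producing \eqref{eq2.10} exactly.

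Finally, to derive \eqref{eq2.11}, I would rewrite \eqref{eq2.10} as
\begin{equation*}
\gamma(Q)\,\lambda[\eta^{\natural},\D_N]=\langle\tau\rangle N^2-\tau[\D_N],
\end{equation*}
and take the infimum over all $N$-point subsets $\D_N\subset Q$. Since $\gamma(Q)>0$ (being the ratio of two positive quantities in \eqref{eq2.9}), the infimum of the left-hand side equals $\gamma(Q)\lambda_N(\eta^{\natural})$ by \eqref{eq1.9}, while the infimum of the right-hand side equals $\langle\tau\rangle N^2-\sup_{\D_N}\tau[\D_N]=\langle\tau\rangle N^2-\tau_N$ by \eqref{eq1.11}. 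There is no real obstacle here: the only subtle point is verifying that one may simultaneously realize the extremum on both sides, but the equality in \eqref{eq2.10} is pointwise in $\D_N$, so $\tau[\D_N]$ is maximized by the same configurations on which $\lambda[\eta^{\natural},\D_N]$ is minimized, and the two extremal quantities sum exactly to $\langle\tau\rangle N^2$.
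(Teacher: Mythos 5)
Your proof is correct and follows exactly the paper's route: the paper derives Corollary~\ref{cor2.1} by "comparing Theorems~1.1 and 2.1," i.e., summing the pointwise identity \eqref{eq2.8} over pairs and substituting into the $L_1$-invariance principle \eqref{eq1.31}, then passing to extremal quantities just as you do. The only remark is that your worry about "simultaneously realizing the extremum" is moot: since \eqref{eq2.10} holds for every $\D_N$, the infimum of $\langle\tau\rangle N^2-\tau[\D_N]$ is automatically $\langle\tau\rangle N^2-\sup_{\D_N}\tau[\D_N]$, with no need for a common extremizer.
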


Notice that for the sphere $S^d$ the discrepancy $\lambda [\eta^{\natural},\D_N]$ with 
the special weight function $\eta^{\natural}(r)=\sin r$ can be written in the form 
\begin{equation}
\lambda[\eta^{\natural},\D_N]=\int\limits^1_{-1}dz\int\limits_{S^d}[\#\{B(y,z)\cap 
\D_N\}-N\mu(B(y,z))]^2\,d\mu(y),
\label{eq2.12}
\end{equation}
where 
$
B(y,z)=\{x\in S^d:\cos \theta(x,y)\ge z\}, \ y\in S^d, z\in [-1,1],
$
is a spherical cap; in our notation $B(y,z)=B_r(y)$, $z=\cos r$.

For spheres the invariance principle \eqref{eq2.10} was established by 
Stolarsky~\cite{33}, see also the recent papers 
\cite{8, 11}, where the original proof of this relation 
was essentially simplified. 
Corollary~\ref{cor2.1} can be thought of as an extension of Stolarsky's invariance principle to projective spaces.
 
A metric space $\M$ with a metric $\rho$ is called isometrically
$L_q$-embeddable, if there exists a map $\varphi:\M\ni x\to \varphi(x)\in L_q$, such that
$\rho(x_1,x_2)=\|\varphi(x_1)-\varphi(x_2)\|_{L_q}$ for all $x_1$, 
$x_2\in \M$. Notice that the  $L_2$-embeddability is stronger and
implies the $L_1$-embeddability, see~\cite[Sec.~6.3]{17}. 

A compact metric space $\M$ is isometrically
$L_1$-embeddable with respect to any symmetric difference metric $\theta_r^{\Delta}$ 
and $\theta^{\Delta}(\eta)$, see \eqref{eq1.17} and \eqref{eq1.13}. 
At the same time, 
the two-point homogeneous space $Q(d,d_0)$ is isometrically $L_2$-embeddable with respect to the chordal metric $\tau$, see \eqref{eq2.5} and \eqref{eq2.7}. 
This explains our terminology of $L_1$- and $L_2$-invariance principles.

It would be interesting to find out whether there are other weight functions 
$ \eta \ne \eta^{\natural} $
for which the spaces $Q(d,d_0)$ with the metric $\theta^{\Delta}(\eta)$ are 
also $L_2$-embeddable.

Now we consider best possible bounds for the extremal quantities \eqref{eq1.9} 
and \eqref{eq1.11}. 
At first, we state in Lemma~2.1 some important auxiliary results.
Introduce the following classes of weight functions $\eta(r)$, $r\in [0,\pi]$,
\begin{equation}
\left.\begin{aligned}
W(a,b)&=\{\eta\ge 0:\|\eta\|_{a,b}<\infty\}, \ a\ge b\ge 1,
\\
\|\eta\|_{a,b}&=\int\limits^{\pi}_0(\sin\frac{1}{2}r)^{a-1}(\cos 
\frac{1}{2}r)^{b-1}\eta(r)\,dr.
\end{aligned}
\label{eq2.13}
\right\}
\end{equation}
It is worth noting that weight functions in the classes \eqref{eq2.14} admit rather 
large singularities at points  $r=0$ and $r=\pi$.

\begin{lemma}\label{lem2.1}
For any space $Q(d,d_0)$ the following hold {\rm:}

{\rm(i)} The kernel \eqref{eq1.6} and the metric \eqref{eq1.14} satisfy the bounds
\begin{equation}
\left.\begin{aligned}
|\lambda_r(y_1,y_2)|&\le C(\sin\frac{1}{2}r)^d(\cos \frac{1}{2}r)^{d_0},
\\
\theta^{\Delta}_r(y_1,y_2)&\le C(\sin \frac{1}{2}r)^d(\cos 
\frac{1}{2}r)^{d_0}.
\end{aligned}
\label{eq2.14}
\right\}
\end{equation}
If $\eta\in W(d+1,d_0+1)$, then the kernel  {\rm(1.8)} and the metric {\rm(1.13)} 
satisfy the bounds
\begin{equation}
\left.\begin{aligned}
|\lambda(\eta,y_1,y_2)|&\le C\|\eta\|_{d+1,d_0+1},
\\
\theta^{\Delta}(\eta,y_1,y_2)&\le C\|\eta\|_{d+1,d_0+1}.
\end{aligned}
\label{eq2.15}
\right\}
\end{equation}

{\rm(ii)} The metric \eqref{eq1.14} satisfies the bound
\begin{equation}
\theta^{\Delta}_r(y_1,y_2)\le C(\sin\frac{1}{2}r)^{d-1}(\cos 
\frac{1}{2}r)^{d_0-1}\theta(y_1,y_2).
\label{eq2.16}
\end{equation}
If $\eta\in W(d,d_0)$, then the metric \eqref{eq1.13} satisfies the bound
\begin{equation}
\theta^{\Delta}(\eta,y_1,y_2)\le C\|\eta\|_{d,d_0}\theta(y_1,y_2).
\label{eq2.17}
\end{equation}

Constants in the bounds \eqref{eq2.14} -- \eqref{eq2.17} depend 
only on $d$ and $d_0$.
\end{lemma}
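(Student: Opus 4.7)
The plan is to reduce every bound to estimates on the volume $v_r$ and on its increments $v_r-v_{r-\delta}$ and $v_{r+\delta}-v_r$, all of which are controlled by the explicit integral representation \eqref{eq2.2}. Throughout I write $\delta=\theta(y_1,y_2)$ and $f(u)=(\sin(u/2))^{d-1}(\cos(u/2))^{d_0-1}$, so that $v_r=\kappa\int_0^r f(u)\,du$ and $v'_r=\kappa\int_r^\pi f(u)\,du$.

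For part (i), distance-invariance together with \eqref{eq1.23} yields $\lambda_r=v_rv'_r-\theta^\Delta_r$, while the trivial inclusion $B_r(y_1)\Delta B_r(y_2)\subset B_r(y_1)\cup B_r(y_2)$ and its analogue for complements give $0\le\theta^\Delta_r\le\min(v_r,v'_r)$. So everything reduces to the uniform estimates $v_r\le C(\sin(r/2))^d$ and $v'_r\le C(\cos(r/2))^{d_0}$, both of which follow directly from the incomplete beta representation of \eqref{eq2.2}: each is immediate on the ``correct'' half of $[0,\pi]$ and trivial on the other half, since the right-hand side is bounded below there. Multiplying gives $v_rv'_r\le C(\sin(r/2))^d(\cos(r/2))^{d_0}$, while $\min(v_r,v'_r)\le C(\sin(r/2))^d(\cos(r/2))^{d_0}$ follows by selecting $v_r$ when $r\le\pi/2$ and $v'_r$ otherwise. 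The integrated bounds \eqref{eq2.15} then follow by multiplying by $\eta(r)$ and integrating, which produces exactly the norm $\|\eta\|_{d+1,d_0+1}$.

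For part (ii), I would exploit the triangle inequality: if $y\in B_r(y_1)\setminus B_r(y_2)$, then $r-\delta\le\theta(y,y_1)<r$ and $r\le\theta(y,y_2)<r+\delta$. Combined with the symmetry identity $\theta^\Delta_r=\mu(B_r(y_1)\setminus B_r(y_2))$, this gives
\[
\theta^\Delta_r\le\min\bigl(v_r-v_{\max(0,\,r-\delta)},\; v_{\min(\pi,\,r+\delta)}-v_r\bigr).
\]
For $r\le\pi/2$ I use the first bound: on $[\max(0,r-\delta),r]$ the factor $\cos(u/2)$ lies in $[\cos(\pi/4),1]$ (so $(\cos(u/2))^{d_0-1}$ is comparable to a constant) and $(\sin(u/2))^{d-1}\le(\sin(r/2))^{d-1}$, hence $f(u)\lesssim f(r)$ on that interval. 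This gives $v_r-v_{r-\delta}\le C\delta f(r)$ when $\delta\le r$, and $v_r\le Crf(r)\le C\delta f(r)$ when $\delta>r$; in either case $\theta^\Delta_r\le C\delta f(r)$, which is \eqref{eq2.16}. The regime $r\ge\pi/2$ is handled symmetrically via the second bound, since now $\sin(u/2)$ is bounded below on $[r,\min(\pi,r+\delta)]$. Multiplying \eqref{eq2.16} by $\eta(r)$ and integrating then yields \eqref{eq2.17} with the norm $\|\eta\|_{d,d_0}$.

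The one technical obstacle is the casework in part (ii), forced by the fact that $f(u)$ attains its maximum strictly inside $[0,\pi]$: neither endpoint of $[r-\delta,r]$ universally dominates $f$ on that subinterval. Splitting into the two regimes $r\le\pi/2$ and $r\ge\pi/2$ and using the correspondingly natural inclusion of the symmetric difference is what keeps the comparison $f(u)\simeq f(r)$ valid in each case; everything else amounts to routine manipulation of the incomplete beta integral.
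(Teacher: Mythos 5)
Your proof is correct, and it reaches the same estimates by a somewhat different and in places more elementary route. In part (i) the paper first bounds the kernel, applying the Cauchy--Schwarz inequality to \eqref{eq1.6} to get $|\lambda_r(y_1,y_2)|\le\lambda_r(y,y)=v_rv'_r$, and then deduces $\theta^{\Delta}_r\le 2v_rv'_r$ from the invariance principle \eqref{eq1.29}; you run the logic in the opposite direction, bounding $\theta^{\Delta}_r\le\min(v_r,v'_r)$ by set inclusion and then reading off the $\lambda_r$ bound from the same identity $\lambda_r+\theta^{\Delta}_r=v_rv'_r$ --- both land on \eqref{eq2.14} via the volume asymptotics \eqref{eq2.3}. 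The more substantive divergence is in part (ii): the paper splits into three regions and, in the main one, invokes the midpoint $y_0$ of the unique minimizing geodesic between $y_1$ and $y_2$ to obtain $B_{r-\delta}(y_0)\subseteq B_r(y_1)\cap B_r(y_2)$ with $\delta=\tfrac12\theta(y_1,y_2)$, hence $\theta^{\Delta}_r\le v_r-v_{r-\delta}$; you instead trap $B_r(y_1)\setminus B_r(y_2)$ inside the annulus $\{r-\delta\le\theta(y,y_1)<r\}$ (and dually inside $\{r\le\theta(y,y_2)<r+\delta\}$) with $\delta=\theta(y_1,y_2)$, using only the triangle inequality and distance-invariance. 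Your containment needs no Riemannian input (existence of geodesic midpoints) and collapses the casework to the two regimes $r\le\pi/2$ and $r\ge\pi/2$, where the monotonicity of $\sin\tfrac12 u$ and $\cos\tfrac12 u$ makes the weight comparable to its value at $r$ on the short interval --- exactly the comparison the paper also performs, modulo its handling of the factor $\bigl(1+\delta/(\pi-r)\bigr)^{d_0-1}$. The paper's midpoint argument gives the slightly stronger two-sided information $\mu(B_r(y_1)\cap B_r(y_2))\ge v_{r-\delta}$, and its Cauchy--Schwarz step is a clean one-liner for the kernel; your version is more portable, since it would apply verbatim to any distance-invariant compact metric space. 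The integration steps producing \eqref{eq2.15} and \eqref{eq2.17} are identical in both treatments.
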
  
        
The proof of Lemma~\ref{lem2.1} is given in Section~\ref{sec7}.
It follows from Lemma~\ref{lem2.1} that the $L_1$-invariance principles 
\eqref{eq1.30} -- \eqref{eq1.32} 
hold for the spaces $Q(d,d_0)$ with weight functions $\eta\in W(d+1,d_0+1)$.

Our result on the extremal quantities\eqref{eq1.9} and \eqref{eq1.11} 
can be stated as follows.

\begin{theorem}\label{thm2.2}
For any space $Q(d,d_0)$ the following hold {\rm:}
 
If $\eta\in W(d,d_0)$, $\eta\ne 0$, then for any $N$ we have
\begin{align}
\langle \theta^{\Delta}(\eta)\rangle 
N^2-c(\eta)N^{1-\frac{1}{d}}&>\theta^{\Delta}_N(\eta)>\langle 
\theta^{\Delta}(\eta)\rangle N^2-C(\eta)N^{1-\frac{1}{d}},
\label{eq2.18}
\\
c_1(\eta)N^{1-\frac{1}{d}}&<\lambda_N(\eta)<C_1(\eta)N^{1-\frac{1}{d}}
\label{eq2.19}
\end{align}
with positive constants independent of $N$.
Particularly, for the chordal metric $\tau$ on $Q(d,d_0)$, we have
\begin{equation}
\langle \tau\rangle N^2-cN^{1-\frac{1}{d}}>\tau_N>\langle \tau\rangle 
N^2-CN^{1-\frac{1}{d}}
\label{eq2.20}
\end{equation}
with the constants $c=c(\eta^{\natural})$ and $C=C(\eta^{\natural})$.
\end{theorem}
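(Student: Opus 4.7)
The plan is to combine four ingredients: the rectifiability argument of Theorem~\ref{thm1.2} for one direction of the bounds, the spherical function expansion of Section~8 (Theorem~9.1) together with the Fourier--Jacobi coefficient estimates of Section~9 for the reverse direction, the $L_1$-invariance identity \eqref{eq1.32} to convert bounds on $\la_N(\eta)$ into bounds on $\theta^{\Delta}_N(\eta)$, and the $L_2$-invariance identity \eqref{eq2.11} of Corollary~\ref{cor2.1} to obtain the statement \eqref{eq2.20} for $\tau_N$.

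First I would dispatch the easy direction. Each $Q(d,d_0)$ is a compact Riemannian manifold of real dimension $d$, hence $d$-rectifiable. For $\eta\in W(d,d_0)$, Lemma~\ref{lem2.1}(ii) gives the Lipschitz bound \eqref{eq1.36} with $c_0=C\|\eta\|_{d,d_0}$, so Theorem~\ref{thm1.2}(ii) yields at once
\begin{equation*}
\la_N(\eta)\le C(\eta)N^{1-1/d}\qquad\text{and}\qquad\theta^{\Delta}_N(\eta)\ge\langle\theta^{\Delta}(\eta)\rangle N^2-C(\eta)N^{1-1/d},
\end{equation*}
which are the upper bound in \eqref{eq2.19} and the right inequality in \eqref{eq2.18}.

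For the matching lower bound $\la_N(\eta)\gtrsim N^{1-1/d}$ I would invoke the harmonic analysis on $Q(d,d_0)=G/K$. Since the space is a rank-one Riemannian symmetric space, $(G,K)$ is a Gelfand pair and the zonal spherical functions are the normalized Jacobi polynomials $\phi_k(\theta)=P_k^{(\alpha,\beta)}(\cos\theta)/P_k^{(\alpha,\beta)}(1)$ with $\alpha=d/2-1$, $\beta=d_0/2-1$. The expansion of $\la_r(y_1,y_2)$, and hence of $\la(\eta,y_1,y_2)$, in spherical functions (Theorem~9.1) combined with the addition formula produces the nonnegative representation
\begin{equation*}
\la[\eta,\D_N]=\sum_{k\ge1}\alpha_k(\eta)\,\sigma_k(\D_N),\qquad\sigma_k(\D_N)=\sum_{j=1}^{m_k}\Bigl|\sum_{x\in\D_N}\varphi_{k,j}(x)\Bigr|^2\ge0,
\end{equation*}
in which $\{\varphi_{k,j}\}_{j\le m_k}$ is an $L_2$-orthonormal basis of the $k$th Laplace--Beltrami eigenspace (of dimension $m_k\asymp k^{d-1}$) and $\alpha_k(\eta)\ge0$. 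From the two-sided estimates of Section~9, $\alpha_k(\eta)$ enjoys lower bounds of order $k^{-d-1}$ for nonzero $\eta\in W(d,d_0)$. A Beck-type argument then selects a dyadic scale $K\asymp N^{1/d}$ and tests the expansion against an auxiliary function spectrally localized at frequencies $k\asymp K$ of $L_2$-mass $\asymp1$; the resulting Parseval-type inequality forces $\sum_{k\asymp K}\sigma_k(\D_N)\gtrsim N^2/K^{d-1}$ for every $N$-point set, and after multiplication by $\alpha_k(\eta)\gtrsim K^{-d-1}$ and summing one obtains $\la_N(\eta)\gtrsim K^{-1}N^2=N^{1-1/d}$.

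Given the lower bound on $\la_N(\eta)$, the left inequality of \eqref{eq2.18} is immediate from the $L_1$-invariance identity \eqref{eq1.32}, $\theta^{\Delta}_N(\eta)=\langle\theta^{\Delta}(\eta)\rangle N^2-\la_N(\eta)$. For the chordal metric one checks directly from \eqref{eq2.13} that $\eta^{\natural}(r)=\sin r$ lies in $W(d,d_0)$, applies the already proven \eqref{eq2.19} to $\eta=\eta^{\natural}$, and substitutes into the identity $\tau_N=\langle\tau\rangle N^2-\gamma(Q)\la_N(\eta^{\natural})$ furnished by Corollary~\ref{cor2.1} to obtain \eqref{eq2.20}. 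The main obstacle will be the Jacobi-polynomial work of Section~9: establishing sharp two-sided bounds on $\alpha_k(\eta)$ for $\eta\in W(d,d_0)$, $\eta\ne0$, with constants sensitive only to $\|\eta\|_{d,d_0}$, requires careful treatment of the interaction between the possible endpoint singularities of $\eta$ at $r=0,\pi$ and the endpoint asymptotics of $P_k^{(\alpha,\beta)}(\cos r)$, and it is this estimate that ultimately makes the bounds in Theorem~\ref{thm2.2} sharp on the full scale of admissible weights.
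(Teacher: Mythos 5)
Your treatment of the easy direction (Theorem~\ref{thm1.2}(ii) plus Lemma~\ref{lem2.1}(ii)), and your use of the invariance principles \eqref{eq1.32} and \eqref{eq2.11} to pass from $\lambda_N(\eta)$ to $\theta^{\Delta}_N(\eta)$ and to $\tau_N$, coincide with the paper. The gap is in the only hard step, the lower bound $\lambda_N(\eta)\gtrsim N^{1-1/d}$. First, your numerology does not close: with $\sigma_k(\D_N)=\sum_j|\sum_x\varphi_{k,j}(x)|^2=m_k\varphi_k[\D_N]$ and $\alpha_k(\eta)\gtrsim k^{-d-1}$, the claimed band estimate $\sum_{k\asymp K}\sigma_k\gtrsim N^2/K^{d-1}$ yields only $\lambda[\eta,\D_N]\gtrsim K^{-d-1}\cdot N^2K^{-(d-1)}=N^2K^{-2d}\asymp 1$ at $K\asymp N^{1/d}$, and your final line $K^{-1}N^2=N^{1-1/d}$ is itself false ($K^{-1}N^2=N^{2-1/d}$). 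What the argument actually requires is $\sum_{k\asymp K}\sigma_k(\D_N)\gtrsim N^2$. Second, and more seriously, no single-dyadic-band inequality of this type can hold universally: by \cite{9, 17a} there exist $N$-point $t$-designs with $t\ge c\,N^{1/d}$, and for such sets $\varphi_k[\D_N]=0$, hence $\sigma_k(\D_N)=0$, for \emph{every} $k$ in a block $[K,2K]$ with $K\asymp N^{1/d}$ (small implied constant). So the ``Beck-type'' localization you invoke is not merely unproved --- in the stated form it is false, and any correct version must aggregate spectral mass over all frequencies $k$ rather than over one block.

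The paper avoids this entirely by a termwise comparison of Fourier--Jacobi coefficients. Lemma~\ref{lem10.1}(i) shows $A_l(\eta)>c\,r^{-d+1}a_l(r)$ uniformly in $l\ge1$ for every $r\in(0,\pi]$; since all $\varphi_l[\D_N]\ge0$ by \eqref{eq8.32}, comparing the expansions \eqref{eq11.1} and \eqref{eq11.2} term by term gives $\lambda[\eta,\D_N]>c\,r^{-d+1}\lambda_r[\D_N]$ for every $r$ and every $\D_N$ (Theorem~\ref{thm11.1}(i)). The lower bound for $\lambda_r[\D_N]$ at the critical scale is then elementary: from \eqref{eq1.3}, \eqref{eq1.5} the local discrepancy is the difference between an integer and $Nv_r$, so $\lambda_r[\D_N]\ge\langle Nv_r\rangle^2$ with $\langle z\rangle$ the distance to the nearest integer; choosing $r$ with $Nv_r=1/2$, which by \eqref{eq2.3} satisfies $r\simeq N^{-1/d}$, gives $\lambda_r[\D_N]\ge 1/4$ and hence $\lambda[\eta,\D_N]\gtrsim r^{-d+1}\simeq N^{1-1/d}$. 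You would need either to adopt this coefficient-comparison route or to replace your band-localized claim by a genuine aggregate estimate; as written, the core of the lower bound is missing.
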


For the chordal metric $\tau$ on the sphere $S^d$ the bounds \eqref{eq2.20} were 
known earlier. The right bound in \eqref{eq2.20} was established by Alexander \cite{1}
and the left by Beck \cite{5}, see also \cite{6, 8a}. 
Theorem~\ref{thm2.2} can be thought of as an extension of the results 
of these authors to projective spaces. However, it should be pointed out that
the bounds \eqref{eq2.18} and \eqref{eq2.19} are new even in the case of sphere $S^d$.

The proof of Theorem~\ref{thm2.2} is given in Section~\ref{sec11}.
It is clear that the right bounds in \eqref{eq2.18} and \eqref{eq2.19} follow immediately
from Theorem~\ref{thm1.2}(ii) and Lemma~\ref{lem2.1}(ii). 
In Section~\ref{sec11} we shall prove the left bound in \eqref{eq2.19}. 
By the invariance principle \eqref{eq1.32} this will immediately imply  
the left bound in \eqref{eq2.18}.
The proof of the left bound in \eqref{eq2.19} is relying on  the theory
of spherical functions on homogeneous spaces $Q(d,d_0)$. 


\section{Applications to $t$-designs}\label{sec3}

Many specific point configurations on spheres and other two-point homogeneous 
spaces are described in the literature, see, for example,  
\cite{4, 9, 10, 10a, 12, 13, 14, 15, 17a, 24, 29}.
One can ask whether the points of such specific configurations are distributed 
uniformly in the corresponding spaces, and whether  
the quadratic discrepancies \eqref{eq1.7} and the sums of pairwise 
distances \eqref{eq1.10} can be estimated precisely for such point subsets.

In the present paper we consider these questions for $t$-designs. 
Consider a smooth compact connected $d$-dimensional manifold $\M$ in $\bR^m$ 
equipped with a smooth Riemannian sructure, so that 
the geodesic distance $\theta$ and the Riemanniam measure $\mu$ normalized 
by \eqref{eq1.1} are defined on $\M$. 
An $N$-point subset $\D_N\subset \M$ is called a  
\emph {$t$-design}, if the exact quadrature formula 
\begin{equation}
\sum\limits_{x\in \D_N} F(x) = N\int\limits_{\M} F(y)\, d\mu (y)
\label{eq3.1}
\end{equation}
holds for all polynomials  $F(x), \, x\in \bR^m$ 
of total degree not exceeding $t$. 

It is known, see, for example, \cite{17a}, that any $N$-point 
$t$-design $\D_N\subset \M$ 
satisfies the bound $N\gtrsim t^d$ with a constant independent of $N$ and $t$. 
An $N$-point $t$-design $\D_N\subset \M$ is called
\emph{optimal}, if  
\begin{equation}
c_+t^d \ge N \ge c_-t^d 
\label{eq3.6}
\end{equation}
with some positive constants   $c_+$ and $c_-$ independent of $N$ and $t$. Actually,
in this definition we deal with sequences of $N$-point $t$-designs $\D_N$ as $N\to \infty$. 

As we mentioned earlier, the two-point homogeneous spaces $Q(d, d_0)$ can be 
canonically imbedded into  $\bR^m$, see the comments to \eqref{eq2.6}, 
Hence, the above definitions can be used for $Q(d, d_0)$.

Since the spaces $Q(d, d_0)$ are homogeneous, an equivalent definition of $t$-designs
can be given in the following invariant form, see \cite{4, 24}. An  $N$-point subset 
$D_N\subset Q(d,d_0)$ 
is a $t$-design, if and only if the exact quadrature formula 
\begin{equation}
\sum\limits_{x_1,x_2\in \D_N} f(\cos \theta (x_1,x_2))  = N^2
\iint\limits_{Q\times Q} f(\cos \theta (y_1,y_2))  \, d\mu (y_1)\, d\mu
(y_2)
\label{eq3.2}
\end{equation}
holds for all polynomials $f(z), \, z\in \C$, of degree not exceeding $t$.
The formula \eqref{eq3.2} is equivalent to the following quadrature formulas 
\begin{equation}
\sum\limits_{x\in \D_N} f(\cos \theta (x,y))  = N
\int\limits_{Q} f(\cos \theta (x,y))  \, d\mu (x), 
\label{eq3.3}
\end{equation}
which holds identically for all $y\in Q$. Another equivalent definition of 
$t$-designs can be given in terms of spherical functions on 
the spaces $Q(d,d_0)$, see \cite{4, 24}. 
We shall return to these questions in Section~\ref{sec8}, see \eqref{eq8.33}.

For any $N$-point subset $\D_N\subset \M$ we put 
\begin{equation}
\nu [\D_N,r] =\max\limits_{y\in Q}\# 
\{ B_r(y)\cap \D_N\}, \quad r\in [0,\pi],
\label{eq3.7}
\end{equation}
and $\nu [\D_N,r]=N$ if $r>\pi$. 

Our result on $t$-designs can be stated as follows. 

\begin{theorem}\label{thm3.1}
Let the weight function  
$\eta \in W(d,d_0)$, then the following hold:  

{\rm (i)} There exists a constant $L\ge 1$ depending only on $d$ and $d_0$,
such that for any $N$-point  $t$-design $\D_N\subset Q(d,d_0)$ with
$t\ge 2L/\pi$ we have 
\begin{equation}
\la [\eta, \D_N]  < C t^{d-1}  (\nu [\D_N, Lt^{-1}])^2.
\label{eq3.8}
\end{equation}

{\rm (ii)} For optimal $N$-point $t$-designs $\D_N\subset Q (d,d_0)$ the bound 
\eqref{eq3.8} takes the form 
\begin{equation}
\la [\eta,\D_N] < C N^{1-\frac 1d} 
(\nu [\D_N, c^{-1/d}_{+} LN^{-1/d}])^2,
\label{eq3.9}
\end{equation}
where $c_+$ is the constant in the definition {\rm (3.6)}. 

The constants $C$ in the bounds \eqref{eq3.8} and \eqref{eq3.9} depend only on $d$,
$d_0$ and $\eta$. 
\end{theorem}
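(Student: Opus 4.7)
My strategy is to combine the spherical-function expansion of $\la[\eta,\D_N]$ obtained in Theorem~\ref{thm9.1} with the orthogonality built into the $t$-design quadrature~\eqref{eq3.2}, and then to bound the surviving high-frequency tail by means of the Fourier--Jacobi coefficient estimates of Section~9 together with a positive localizing polynomial kernel.

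Since the two-point homogeneous space $Q=Q(d,d_0)$ is commutative, the pair kernel $\la(\eta,x_1,x_2)$ depends only on $\theta(x_1,x_2)$ and, by Theorem~\ref{thm9.1}, admits the expansion
\[\la(\eta,x_1,x_2)=\sum_{k\ge 0}\hat\la(\eta,k)\,\phi_k(\theta(x_1,x_2)),\]
where $\phi_k$ is the zonal spherical function on $Q$ of index $k$, a Jacobi polynomial of degree $k$ in $\cos\theta$. From $\int_Q\Lambda[B_r(y),\cdot]\,d\mu(y)=0$ one gets $\iint\la(\eta,y_1,y_2)\,d\mu\,d\mu=0$, so $\hat\la(\eta,0)=0$. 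The spherical addition formula gives the positivity
\[S_k(\D_N):=\sum_{x_1,x_2\in\D_N}\phi_k(\theta(x_1,x_2))\ge 0\qquad(k\ge 0),\]
and summing the expansion over $\D_N\times\D_N$ yields $\la[\eta,\D_N]=\sum_{k\ge 1}\hat\la(\eta,k)\,S_k(\D_N)$. Now each $\phi_k$ $(k\ge 1)$ is a polynomial of degree $k$ in $\cos\theta$ with zero mean, so the invariant quadrature~\eqref{eq3.2} for a $t$-design forces $S_k(\D_N)=0$ for $1\le k\le t$, and only the high-frequency tail survives:
\[\la[\eta,\D_N]=\sum_{k>t}\hat\la(\eta,k)\,S_k(\D_N).\]

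To control this tail I would use the Fourier--Jacobi coefficient estimates of Section~9 to construct a nonnegative polynomial majorant kernel $\Psi_t(\theta)=\sum_{k\ge 0}b_k\,\phi_k(\theta)\ge 0$, of Jackson/Fej\'er type, with $b_k\ge 0$ for all $k$, with $b_k\ge |\hat\la(\eta,k)|$ for every $k>t$, and with $\Psi_t$ tightly concentrated in a neighborhood of $\theta=0$ of radius $Lt^{-1}$, where $L\ge 1$ is the dimensional constant in the statement. Since every $S_k(\D_N)\ge 0$, this produces the pointwise domination
\[\la[\eta,\D_N]\le\sum_{k\ge 0}b_k\,S_k(\D_N)=\sum_{x_1,x_2\in\D_N}\Psi_t(\theta(x_1,x_2)).\]
Concentration of $\Psi_t$ confines the right-hand side to pairs $(x_1,x_2)$ at distance $\le Lt^{-1}$; covering $Q(d,d_0)$ by a dimension-dependent family of overlapping balls of radius $Lt^{-1}/2$ so that every close pair is contained in a single ball, and applying a discrete Cauchy--Schwarz within each ball against the uniform bound $\#(B_{Lt^{-1}/2}(y)\cap\D_N)\le\nu[\D_N,Lt^{-1}]$, produces the announced factor $C\,t^{d-1}(\nu[\D_N,Lt^{-1}])^2$ and proves~\eqref{eq3.8}. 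The hypothesis $t\ge 2L/\pi$ only serves to guarantee $Lt^{-1}\le\pi/2$ so that the covering argument makes sense. Part~(ii) follows from part~(i) by choosing the largest integer $t$ with $c_+t^d\le N$: by~\eqref{eq3.6} this forces $t^{-1}\le c_+^{1/d}N^{-1/d}$ and $t^{d-1}\le c_+^{(d-1)/d}N^{(d-1)/d}$, so substitution into~\eqref{eq3.8} yields~\eqref{eq3.9}.

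The main obstacle is the explicit construction of the positive majorant kernel $\Psi_t$ simultaneously achieving all three required properties: nonnegative Fourier--Jacobi coefficients $b_k$, coefficient-wise dominance $b_k\ge|\hat\la(\eta,k)|$ for every $k>t$ with the sharp $t^{d-1}$ accounting, and tight spatial localization at scale $Lt^{-1}$. This will rely on the refined Jacobi-polynomial kernel analysis underpinning the Fourier--Jacobi coefficient bounds of Section~9, in the spirit of classical Jackson/Vall\'ee-Poussin constructions adapted to the Jacobi weight $(\sin\tfrac{1}{2}r)^{d-1}(\cos\tfrac{1}{2}r)^{d_0-1}$ of~\eqref{eq2.2}. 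A secondary subtlety is the geometric pigeonholing on balls of radius $Lt^{-1}/2$: producing the sharp $(\nu[\D_N,Lt^{-1}])^2$ factor, rather than the weaker $N\cdot\nu[\D_N,Lt^{-1}]$ obtained from a naive pair count, requires a careful discrete Cauchy--Schwarz applied to the ball cover.
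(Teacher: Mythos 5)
Your overall skeleton---expand $\la[\eta,\D_N]$ in spherical functions, use $\varphi_l[\D_N]\ge 0$ together with the vanishing of the first $t$ coefficients for a $t$-design, and then majorize the surviving tail by a positive-definite localized kernel---is exactly the strategy of the paper. But the step you yourself flag as the ``main obstacle'' is the whole content of the proof, and the construction you propose for it would not work as stated. A Jackson/Fej\'er-type polynomial kernel $\Psi_t$ of degree comparable to $t$ has only finitely many nonzero Fourier--Jacobi coefficients, so it cannot satisfy $b_k\ge|\hat\la(\eta,k)|$ for \emph{every} $k>t$: by Theorem~\ref{thm9.1} and Lemma~\ref{lem10.1}(i) the coefficients $\kappa(d,d_0)l^{-2}M_lA_l(\eta)$ are strictly positive and of order $l^{-2}$ for all $l$, so any admissible majorant must have an infinite expansion, i.e.\ it cannot be a polynomial, and the classical approximation-theoretic constructions you invoke do not directly supply one. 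Moreover, even with a concentrated but not compactly supported majorant, the claim that ``concentration confines the right-hand side to pairs at distance $\le Lt^{-1}$'' requires handling the tails of $\Psi_t$ (e.g.\ a dyadic decomposition), which your covering-plus-Cauchy--Schwarz sketch does not address.

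The paper resolves precisely this difficulty by taking the majorant to be $Cr^{-d}\la(\chi_r,\cdot,\cdot)$ with $r=Lt^{-1}$, i.e.\ the radially averaged intersection-of-balls kernel $\int_0^r\mu(B_u(y_1)\cap B_u(y_2))\,du$ (suitably centered). This kernel is automatically positive definite with nonnegative coefficients $l^{-2}M_lA_l(\chi_r)$ computed exactly via Rodrigues' formula (Lemma~\ref{lem9.1}); the coefficient dominance $A_l(\eta)<Cr^{-d}A_l(\chi_r)$ for $lr>L$ is Lemma~\ref{lem10.1}(ii), proved from the Jacobi asymptotics \eqref{eq9.2}; and---crucially---its sum over $\D_N\times\D_N$ is bounded by $r(\nu[\D_N,r])^2$ by the one-line pointwise estimate \eqref{eq11.6}--\eqref{eq11.8}, with no covering argument at all. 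This yields $\la[\eta,\D_N]<Cr^{-d+1}(\nu[\D_N,r])^2=C't^{d-1}(\nu[\D_N,Lt^{-1}])^2$. Your reduction of part (ii) to part (i) is essentially right, except that $t$ is given with the design rather than chosen by you, and the upper bound $t^{d-1}\lesssim N^{(d-1)/d}$ comes from the lower inequality $N\ge c_-t^d$ in \eqref{eq3.6}, not from $c_+$.
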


The inequality \eqref{eq3.9} follows immediately 
from \eqref{eq3.8} and the definition \eqref{eq3.6}.  
The proof of the bound  \eqref{eq3.8} is given in Section~\ref{sec11}.
The proof is relying
on the theory of spherical functions on homogeneous spaces $Q(d,d_0 )$. 

We are interested whether the factor in \eqref{eq3.9} with the function $\nu$
can be bounded from above
by a constant independent of $N$. In this case, the order of bound \eqref{eq3.9}
would be the best possible. Two simple sufficient conditions for this
are given below in Lemma 3.1.

Introduce some definitions.
For an arbitrary $N$-point subset $\D_N\subset \M$, we put 
\begin{equation}
\delta [\D_N]= \frac 12 \min \{ \theta (x_1,x_2): x_1,x_2 \in \D_N,
x_1\ne x_2\}  
\label{eq3.10}
\end{equation}
The balls $B_{\delta}(x)$, $\delta=\delta[\D_N]$, $x\in \D_N$,
do not overlap. Therefore, $\delta \lesssim N^{-1/d}$, since
the volume of balls  $v_r(x)\simeq r^d$ uniformly for $r\in [0,\pi]$
and $x \in \M$. 
An $N$-point subset $\D_N\subset \M$ is called 
\emph{well-separated}, if $\delta [\D_N]\ge cN^{-1/d}$ with a constant
$c>0$ independent of $N$. 

Consider an equal-measure partition $\mathcal{P}_N = \{P_i\}_{1}^{N}$ of 
the manifold $\M$,
$$
\M\ =\bigcup^{N}_{i=1} P_i,
\quad
\mu(P_i\cap P_j)=0,
\quad
i\ne j,
\quad
\mu(P_i) = 1/N,
$$
and put 
$$
\Diam(\mathcal{P}_N, \theta)=\max_{1\le i\le N}\diam(P_i,\theta).
$$
We say that an equal-measure partition $\mathcal{P}_N$ is of \emph{small diameter},
if 
\begin{equation}
\Diam(\mathcal{P}_N, \theta) \le c_0 N^{-1/d} 
\label{eq3.10a}
\end{equation}
with a constant $c_0 > 0$ independent of $N$.  
Constructions of equal-measure partitions of small diameter are known for
a large class of smooth compact manifolds in $\bR^m$, see  \cite{19a} and references
therein.

We also say that an $N$-point subset $\D_N = \{x_i\}_1^N \in \M$ is 
\emph{subordinated}
to a partition $\mathcal{P}_N = \{P_i\}_1^N$ of $\M$, if $x_i \in P_i, i = 1\dots N$. 

We conveniently agree that for $r>\pi$ the ball $B_r(x)=\M$ and $v_r(x)=1$.  
With these convention and definitions the following result is true.
\begin{lemma}\label{lem3.1}
Suppose that an $N$-point subset $\D_N\subset \M$ satisfies one of 
the following conditions{\rm:}

{\rm(i)} $\D_N$ is well-separated,

{\rm(ii)} $\D_N$ is subordinated to an equal-measure partition of small diameter.

Then, for any constant $c>0$ there exists a constant $C=C(c)$ independent 
of $N$ such that 
\begin{equation}
\nu [\D_N,  cN^{-1/d}] \le C.
\label{eq3.11}
\end{equation}
\end{lemma}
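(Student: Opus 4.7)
The plan is to prove both cases of Lemma~\ref{lem3.1} via straightforward volume-packing arguments, using the uniform estimate $v_r\simeq r^d$ from \eqref{eq2.3} (which, though stated for $Q(d,d_0)$, is the analogue that holds here under the standing assumption on $\M$ as a smooth compact $d$-dimensional Riemannian manifold; one only needs $v_r(x)\simeq r^d$ uniformly in $x\in\M$ for $0<r\le\pi$).

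Fix an arbitrary $y\in\M$ and set $r=cN^{-1/d}$. Let $k=\#\{B_r(y)\cap\D_N\}$ and enumerate the points as $x_1,\dots,x_k$. I want to show $k\le C(c)$ in each of the two situations.

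\textbf{Case (i).} If $\D_N$ is well-separated, then $\delta=\delta[\D_N]\ge c'N^{-1/d}$ for some $c'>0$ independent of $N$, and the balls $B_\delta(x_i)$, $i=1,\dots,k$, are pairwise disjoint. Since $x_i\in B_r(y)$, we have $B_\delta(x_i)\subset B_{r+\delta}(y)$ by the triangle inequality. Summing the disjoint volumes gives
\begin{equation*}
k\,v_\delta \;\le\; v_{r+\delta}.
\end{equation*}
Using $v_\rho\simeq\rho^d$ uniformly, one obtains $k\lesssim((r+\delta)/\delta)^d=(1+r/\delta)^d\le(1+c/c')^d$, which depends only on $c$ (and the fixed constants of $\M$), as required.

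\textbf{Case (ii).} If $\D_N=\{x_i\}_1^N$ is subordinated to an equal-measure partition $\mathcal{P}_N=\{P_i\}_1^N$ with $\Diam(\mathcal{P}_N,\theta)\le c_0N^{-1/d}$, then whenever $x_i\in B_r(y)$ we have $P_i\subset B_{r+c_0N^{-1/d}}(y)$ (since $x_i\in P_i$ and $\diam(P_i,\theta)\le c_0N^{-1/d}$). Because the cells $P_i$ are measure-disjoint with $\mu(P_i)=1/N$, summing the measures of the $k$ cells corresponding to points of $\D_N\cap B_r(y)$ yields
\begin{equation*}
k/N \;\le\; v_{r+c_0N^{-1/d}} \;\lesssim\; (c+c_0)^d/N,
\end{equation*}
whence $k\le C(c,c_0)$. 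Taking the maximum over $y$ gives \eqref{eq3.11}.

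The argument is essentially routine; the only mild subtlety is that in case (ii) one must invoke measure-disjointness of the cells (not set-disjointness) to sum the measures, and in case (i) one has to use that $v_\rho\simeq\rho^d$ uniformly in the base point, so that the packing inequality converts cleanly into a dimension-free bound on $k$. Neither step presents a genuine obstacle.
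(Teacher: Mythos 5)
Your proof is correct and follows essentially the same volume-packing argument as the paper: in case (i) you pack the disjoint balls $B_\delta(x_i)$ into $B_{r+\delta}(y)$ and use $v_\rho\simeq\rho^d$, and in case (ii) you pack the measure-disjoint cells of measure $1/N$ into $B_{r+c_0N^{-1/d}}(y)$, exactly as in the paper. No differences worth noting.
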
  

\begin{proof}[Proof] For brevity, we write $a=cN^{-1/d}$. Consider 
the ball $B_a(y)$ centered at an arbitrary point $y\in Q$ and put $\E=B_a(y)\cap \D_N$,
$K=\# \{ \E\}$. Assume also that points of $\D_N=\{x_i\}_1^N$
are enumerated such that $\E=\{x_i\}_1^K$.

(i) By the definition of a well-separated subset $\D_N$ , the balls $B_{\delta}(x)$, 
$\delta=\delta [\D_N]$, $x\in \E$, do not overlap and all these balls are contained in the ball 
$B_{a+\delta}(y)$. Therefore, 
$
\sum\nolimits_{1\le i \le K}v_{\delta}(x_i)\le v_{a+\delta}$. 
Since  $v_r(x)\simeq r^d,
$ 
we have
$
K\lesssim v_{a+\delta}/v_{\delta} \simeq  
( 1 + C / c )^d, 
$
and \eqref{eq3.11} follows. 

(ii) By the definition of a subset $\D_N$ subordinated to an equal-measure
partition $\mathcal{P}_N = \{P_i\}_1^N$ of small diameter $b=c_1N^{-1/d}$,
each part $P_i, 1\le i \le K$ is contained in the ball $B_{a+b}(y)$. Therefore,
$N^{-1}K\le v_{a+b}(y)$, and $K \le Nv_{a+b}(y) \simeq (c+c_1)^d$, that proves
\eqref{eq3.11}.
\end{proof}

Comparing Theorem 3.1 with Lemma 3.1, and taking into account the left
bounds of Theorem 2.2, we arrive at the following.

\begin{corollary}\label{cor3.1}  Let the weight function
$\eta \in W(d,d_0)$, $\eta \ne 0$. Suppose that an $N$-point subset
$\D_N\subset Q (d,d_0)$ is an optimal $t$-design and satisfies 
one of the conditions (i) or (ii) of Lemma 3.1.
Then, for all sufficiently large $N$ we have 
\begin{align}
\langle \theta^{\Delta}(\eta)\rangle N^2 -cN^{1-\frac 1d} & >
\theta^{\Delta}[\eta, \D_N] > \langle \theta^{\Delta}(\eta)\rangle N^2
-CN^{1-\frac 1d},
\label{eq3.12}
\\
cN^{1-\frac 1d} & < \la [\eta, \D_N]  <CN^{1-\frac 1d}
\label{eq3.13}
\end{align}

Particularly, for the chordal metric $\tau$ on $Q(d,d_0)$ we have 
\begin{equation}
\langle r\rangle N^2 -cN^{1-\frac 1d} >\tau [\D_N] > \langle r\rangle N^2 
-C N^{1-\frac 1d}
\label{eq3.14}
\end{equation}

The positive constants in \eqref{eq3.12} -- \eqref{eq3.14} are independent of $N$. 
\end{corollary}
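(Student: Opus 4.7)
The plan is to assemble Corollary~\ref{cor3.1} from three ingredients that are already established in the paper: the upper bound on $\lambda[\eta,\D_N]$ provided by Theorem~\ref{thm3.1}, the uniform control of $\nu[\D_N,cN^{-1/d}]$ furnished by Lemma~\ref{lem3.1}, and the invariance principles (Theorem~\ref{thm1.1} and Corollary~\ref{cor2.1}) which convert bounds on $\lambda$ into bounds on the symmetric-difference and chordal metrics. The entire argument is bookkeeping layered on top of these substantial earlier results.

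First I would establish \eqref{eq3.13}. For the right-hand inequality, I observe that optimality of the $t$-design, $c_-t^d\le N\le c_+t^d$, makes the hypothesis $t\ge 2L/\pi$ of Theorem~\ref{thm3.1} automatic once $N$ is sufficiently large, so part (ii) applies and yields
\[
\lambda[\eta,\D_N]<C N^{1-1/d}\bigl(\nu[\D_N,\,c_+^{-1/d}LN^{-1/d}]\bigr)^{2}.
\]
Since $\D_N$ satisfies hypothesis (i) or (ii) of Lemma~\ref{lem3.1}, applying that lemma with the constant $c=c_+^{-1/d}L$ bounds the $\nu$-factor by a constant independent of $N$, giving the upper bound $\lambda[\eta,\D_N]<C'N^{1-1/d}$. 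The left-hand inequality in \eqref{eq3.13} requires nothing new: it is simply the left-hand inequality of \eqref{eq2.19} in Theorem~\ref{thm2.2}, combined with $\lambda[\eta,\D_N]\ge\lambda_N(\eta)$, which holds for every $N$-point subset.

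Next, \eqref{eq3.12} follows by substituting the two-sided bound \eqref{eq3.13} into the $L_1$-invariance identity \eqref{eq1.31}, namely $\lambda[\eta,\D_N]+\theta^{\Delta}[\eta,\D_N]=\langle\theta^{\Delta}(\eta)\rangle N^{2}$, and rearranging; the two inequalities swap direction, producing \eqref{eq3.12}. For \eqref{eq3.14} I would first verify that $\eta^{\natural}(r)=\sin r$ belongs to $W(d,d_0)$: using $\sin r=2\sin(r/2)\cos(r/2)$, the integral defining $\|\eta^{\natural}\|_{d,d_0}$ becomes a convergent beta-type integral $2B((d+1)/2,(d_0+1)/2)$, so the preceding step applies with $\eta=\eta^{\natural}$. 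Substituting the resulting two-sided bound on $\lambda[\eta^{\natural},\D_N]$ into the $L_2$-invariance identity \eqref{eq2.10}, $\gamma(Q)\lambda[\eta^{\natural},\D_N]+\tau[\D_N]=\langle\tau\rangle N^{2}$, and rearranging yields \eqref{eq3.14}.

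No step here is a real obstacle. The only minor points requiring attention are (a) matching the constant $c_+^{-1/d}L$ sitting inside $\nu$ in \eqref{eq3.9} with the free parameter $c$ in Lemma~\ref{lem3.1}, and (b) verifying the membership $\eta^{\natural}\in W(d,d_0)$ so that the chordal-metric bound is legitimate; both are immediate. The phrase "for all sufficiently large $N$" in the statement is exactly the quantitative price paid for the hypothesis $t\ge 2L/\pi$ in Theorem~\ref{thm3.1}, which by optimality of the design translates to $N\ge c_-(2L/\pi)^d$.
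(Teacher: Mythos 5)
Your proposal is correct and follows essentially the same route as the paper, which derives the corollary by combining Theorem~\ref{thm3.1}(ii) with Lemma~\ref{lem3.1} for the upper bound on $\la[\eta,\D_N]$, the left bound of Theorem~\ref{thm2.2} for the lower bound, and then the invariance principles \eqref{eq1.31} and \eqref{eq2.10} to pass to \eqref{eq3.12} and \eqref{eq3.14}. Your added checks (that $t\ge 2L/\pi$ follows from optimality for large $N$, and that $\eta^{\natural}\in W(d,d_0)$) are correct and fill in details the paper leaves implicit.
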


The existence of optimal $t$-designs was a long standing open problem 
known as the Korevaar--Meyers conjecture. In the papers \cite{9, 10} by Bondarenko,
Radchenko and Viazovska  a breakthrough on the problem was obtained for 
spherical $t$-designs. In \cite{9} the existence of optimal $t$-designs
$\D_N \in S^d$ 
was proved for all sufficiently large $N$, and it was proved
in \cite{10} that such optimal $t$-designs can be chosen as well-separated
subsets on the spheres $S^d$. Hence, Corollary 3.1 is applicable for 
the spheres $S^d$. 

Using optimal spherical $t$-designs $\D_N$ one can easily construct 
optimal $[t/2]$-designs $\D^\circ_N$ on 
the real projective space $\bR P^d=Q(d,1)$. Furthermore, if $\D_N$ is well-separated
on $S^d$, then $\D^\circ_N$ satisfies the relation \eqref{eq3.11} on $\bR P^d$.
Hence, Corollary 3.1 is also applicable for the real
projective spaces $\bR P^d$. 

The corresponding generalizations to the projective spaces
 $\C P^n$, $\H P^n$  and  $\Q P^2$
are not so straightforward. In the recent paper \cite{17a} by Etayo, Marzo
and Ortega--Cerd\`{a} the results of the paper \cite{9} were
extended to smooth compact connected algebraic manifolds
$
\M=\{x \in \bR^m: f_1(x)=\dots =f_r(x)=0\}
$, 
where $f_1, \dots , f_r$ are
polynomials with real coefficients.  
We state results from \cite{17a} in the following form. 

\begin{theorem}\label{thm3.2} 
Let $\M$ be a smooth compact connected $d$-dimensional affine algebraic manifold  
in $\bR^m$ equipped with a smooth Riemannian structure. Then 
there exist the positive constants $c_-,\, c_+$ and $c_0$ 
depending only on $\M$, such that the following is true.

{\rm(i)} For all sufficiently large $N$ there exist $N$-point optimal $t$-designs 
$\D_N \subset \M$ satisfying \eqref{eq3.6}.

{\rm(ii)} Each of these optimal $t$-designs $\D_N$ is subordinated to 
an equal-measure partition $\mathcal{P}_N $ of small diameter on $\M$
satisfying \eqref{eq3.10a}.
 
\end{theorem}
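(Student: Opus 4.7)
My plan is to follow the Bondarenko--Radchenko--Viazovska topological scheme, adapted from spheres to general smooth algebraic submanifolds of $\bR^m$. The first step is to construct, for each sufficiently large $N$, an equal-measure partition $\mathcal{P}_N = \{P_i\}_{i=1}^N$ of $\M$ with $\mu(P_i) = 1/N$ and $\Diam(\mathcal{P}_N, \theta) \le c_0 N^{-1/d}$. Since $\M$ is a smooth compact Riemannian $d$-manifold, geodesic balls satisfy $v_r(x) \simeq r^d$ uniformly in $x \in \M$; starting from a finite covering by balls of radius $\simeq N^{-1/d}$ and iteratively bisecting and redistributing mass yields the required partition. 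This already delivers conclusion (ii), provided the $t$-design in (i) can be arranged to have exactly one point per cell.

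Next, let $\mathcal{V}_t \subset C(\M)$ denote the restriction to $\M$ of real polynomials on $\bR^m$ of total degree at most $t$. Since $\M \subset \bR^m$ is a $d$-dimensional algebraic variety, the Hilbert polynomial of its coordinate ring gives $\dim \mathcal{V}_t \simeq t^d$. I would fix $N \simeq t^d$ with $N$ larger than $\dim\mathcal{V}_t$ by a definite multiplicative factor and consider the quadrature error map
\begin{equation*}
F : \prod_{i=1}^N P_i \to \mathcal{V}_t^{\ast},
\qquad
F(x_1,\dots,x_N)(p) = \sum_{i=1}^N p(x_i) - N\int_\M p\,d\mu.
\end{equation*}
A zero of $F$ is precisely a $t$-design subordinated to $\mathcal{P}_N$, so both conclusions reduce to showing $0 \in F\bigl(\prod_i P_i\bigr)$.

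To force a zero, I would mimic the BRV degree-theoretic argument. Each cell $P_i$ has diameter $\lesssim t^{-1}$, which is the natural ``polynomial scale'' on which elements of $\mathcal{V}_t$ vary, so that for generic configurations the differential $dF$ is surjective onto $\mathcal{V}_t^{\ast}$. A Brouwer-type (or Borsuk--Ulam-type) homotopy beginning from a reference configuration (for instance the centres of a slightly finer equal-measure partition), for which one can directly certify a non-vanishing topological degree, then forces $F^{-1}(0)$ to be non-empty. Since the resulting $t$-design is built by choosing one $x_i \in P_i$, subordination to $\mathcal{P}_N$ holds by construction, closing both parts.

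The main obstacle is executing the last step rigorously on a general algebraic manifold rather than on the sphere, where explicit spherical harmonic decompositions, antipodal symmetry and sharp Weyl-type estimates are available. One must replace these with quantitative Markov--Bernstein inequalities for polynomials restricted to $\M$ (so that elements of $\mathcal{V}_t$ are essentially constant on cells of diameter $\lesssim t^{-1}$), and with a transversality argument exploiting the algebraic structure of $\M$ to rule out degenerate configurations on which $dF$ loses rank. Once these quantitative analytic and algebraic inputs are secured, the degree-theoretic conclusion proceeds as in \cite{9, 10, 17a}.
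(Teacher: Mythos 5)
This theorem is not proved in the paper at all: the author explicitly imports it from the work of Etayo, Marzo and Ortega--Cerd\`{a} \cite{17a}, stating that part (i) is their Theorem 2.2 and that part (ii) is read off from their proof. So the only fair comparison is between your outline and the strategy of \cite{17a} (which in turn extends Bondarenko--Radchenko--Viazovska \cite{9, 10}). Your outline does identify the correct skeleton: an equal-measure partition of diameter $\lesssim N^{-1/d}$ (which exists on such manifolds by \cite{19a}), the count $\dim\mathcal{V}_t \simeq t^d$ from the Hilbert polynomial, the reformulation of a subordinated $t$-design as a zero of the quadrature-error map $F$ on $\prod_i P_i$, and a topological argument to force that zero. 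This is genuinely the right plan, and it correctly explains why (ii) comes for free once (i) is proved this way.

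The gap is that the step which actually constitutes the proof is not carried out, and the mechanism you sketch for it is not the one that works. Generic surjectivity of $dF$ at some configuration does not produce a zero of $F$, and there is no ``reference configuration for which one can directly certify a non-vanishing topological degree'' --- if such a configuration were easy to exhibit one would essentially already have a design. The argument of \cite{9, 17a} instead applies a Brouwer-type lemma (if $f:\overline{\Omega}\to\bR^n$ is continuous and $\langle f(x), x-x_0\rangle>0$ on $\partial\Omega$, then $f$ vanishes in $\Omega$) to a map built from gradient flows of the polynomials in $\mathcal{V}_t$: one shows that moving each point a distance $\simeq t^{-1}$ along the gradient of a polynomial $p$ changes $\sum_i p(x_i)$ by at least $c\,t\,\|p\|$, which is where the Marcinkiewicz--Zygmund/Bernstein-type inequalities on $\M$ enter. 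You flag these inequalities and a ``transversality argument'' as the remaining obstacles, which is an honest acknowledgement, but it means the proposal is a research plan rather than a proof; moreover the transversality/degree-certification route you propose would still need to be replaced by the boundary-condition argument above to close the argument. Since the paper itself delegates all of this to \cite{17a}, your attempt should be judged as an incomplete reconstruction of that external proof, not as an alternative to anything in the present paper.
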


In fact, the statement (i) of Theorem 3.2 is contained in Theorem 2.2 in \cite{17a},
while the statement (ii) follows immediately from 
the proof of Theorem 2.2 in \cite{17a}.

The two-point homogeneous spaces $Q(d,d_0)$ can be realized as
smooth compact connected affine algebraic manifolds. For the spheres $S^d$ 
this is obvious, while for the projective spaces 
$\bR P^n, \C P^n,  \H P^n$ and the projective octonionic plane $\bO P^2$ this 
is also well known and follows immediately from the explicit formulas \eqref{eq5.13} and \eqref{eq5.14}.

Comparing Theorem 3.2 and Corollary 3.1, we arrive at the following.

\begin{corollary}\label{cor3.2}
On each space $Q(d,d_0)$ for all sufficiently large $N$ there exist $N$-point 
optimal $t$-designs $\D_N \subset Q(d,d_0)$, which satisfy the relations
\eqref{eq3.12}, \eqref{eq3.13}, \eqref{eq3.14} of Corollary 3.1.

\end{corollary}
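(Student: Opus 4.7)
The plan is to deduce Corollary~\ref{cor3.2} by stitching together Theorem~\ref{thm3.2} and Corollary~\ref{cor3.1}, the only genuine content being the verification that the hypotheses of both results apply simultaneously to each space $Q(d,d_0)$.

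First, I would verify that each $Q(d,d_0)$ may be realized as a smooth compact connected affine algebraic manifold in some Euclidean space $\bR^m$. For the spheres $S^d$ this is the defining equation $\|x\|^2=1$. For the projective spaces $\bR P^n$, $\C P^n$, $\H P^n$ and the octonionic plane $\bO P^2$ this follows from the canonical embeddings into spaces of Hermitian projection matrices via the map \eqref{eq2.6}; the image is cut out by the explicit polynomial relations recalled in \eqref{eq5.13}, \eqref{eq5.14} (idempotency, hermiticity, trace one). In each case the embedding is smooth and the Riemannian metric induced from $\bR^m$ is (up to a constant) the $G$-invariant metric $\theta$ on $Q(d,d_0)$, and the Riemannian measure coincides with the normalized $G$-invariant measure $\mu$.

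Next, I would invoke Theorem~\ref{thm3.2} applied to $\M=Q(d,d_0)$. Part~(i) delivers, for all sufficiently large $N$, an $N$-point optimal $t$-design $\D_N\subset Q(d,d_0)$, that is, a set satisfying the quadrature identity \eqref{eq3.1} (equivalently, by $G$-homogeneity, \eqref{eq3.2}--\eqref{eq3.3}) and the two-sided bound \eqref{eq3.6}. Part~(ii) guarantees that this $\D_N$ is subordinated to some equal-measure partition $\mathcal{P}_N$ of $Q(d,d_0)$ whose diameter satisfies \eqref{eq3.10a}. Thus the hypothesis (ii) of Lemma~\ref{lem3.1} — and hence of Corollary~\ref{cor3.1} — holds for each such $\D_N$.

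Finally, I would apply Corollary~\ref{cor3.1} to these subsets. Since $\D_N$ is an optimal $t$-design subordinate to an equal-measure partition of small diameter, the chain of inequalities \eqref{eq3.12}, \eqref{eq3.13} and \eqref{eq3.14} follows directly from Corollary~\ref{cor3.1} for every $\eta\in W(d,d_0)$, $\eta\ne 0$, and in particular for $\eta^{\natural}$ giving the chordal statement \eqref{eq3.14}. No serious obstacle arises: the entire argument is a matter of checking that the algebraic-manifold hypothesis of Theorem~\ref{thm3.2} is met by each $Q(d,d_0)$ with its intrinsic Riemannian structure. The only point that requires a little care is ensuring that the notion of $t$-design coming from polynomial exactness on the ambient $\bR^m$ in Theorem~\ref{thm3.2} agrees with the invariant definition \eqref{eq3.2}--\eqref{eq3.3} used in Corollary~\ref{cor3.1}; this follows because functions of the form $f(\cos\theta(\cdot,y))$ pull back under the embedding $\Pi$ to polynomials on $\bR^m$ of comparable degree, as already noted in the comments surrounding \eqref{eq2.6}.
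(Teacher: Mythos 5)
Your proposal is correct and follows exactly the paper's route: realize each $Q(d,d_0)$ as a smooth compact connected affine algebraic manifold via \eqref{eq5.13}, \eqref{eq5.14}, apply Theorem~\ref{thm3.2} to obtain optimal $t$-designs subordinated to equal-measure partitions of small diameter, and then feed these into Corollary~\ref{cor3.1} through condition (ii) of Lemma~\ref{lem3.1}. The paper states this in one line (``Comparing Theorem 3.2 and Corollary 3.1''); your write-up simply fills in the same details, including the algebraic-manifold verification the paper gives in the preceding paragraph.
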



\section*{II. Geometry of two-point homogeneous spaces and \\ the 
$L_2$-invariance principles}

\section{Preliminaries: Models of projective spaces and chordal metrics }\label{sec5}

In this section we define the chordal metrics on the projective spaces
$\F P^n$, $\F=\bR$, $\C,$ $\H$, $n\ge 2$, and the octonionic projective 
plane  $\bO P^2$ in terms of special models for these spaces. 
For the sake of convenience, we describe such
models in sufficient detail and give the necessary references.

Recall the general facts on the algebras $\bR, \C, \H, \bO$ over the field of real numbers. 
We have the natural inclusions $\bR\subset \C\subset \H\subset \bO.$
where the octonions $\bO$ are a nonassociative and noncommutative algebra
of  dimension 8  with a basis $1, e_1, e_2, e_3, e_4, e_5, e_6, e_7$ 
(their multiplication table can be found in \cite[p.~150]{3} and \cite[p.~90]{7}),
the quaternions $\H$ are an associative but noncommutative subalgebra of
dimension 4 spanned by ${1,e_1, e_2, e_3}$, finally, $\C$ and $\bR$ are
associative and commutative subalgebras of dimensions 2 and 1 spanned by 
${1, e_1}$ and ${1}$, correspondingly. 
From the multiplication table one can easily see that for any two indexes
$1 \le i,j \le 7,i\ne j, $ there exists an index $1 \le k \le 7$, such that
\begin{equation}
e_ie_j=-e_je_i=e_k, \quad i\ne j, \quad e^2_i=-1.
\label{eq5.1}
\end{equation}
Let $a=\alpha_0+\sum\nolimits^{7}_{i=1}\alpha_i e_i\in \bO$, 
$\alpha_i\in \bR$, $0\le i\le 7$, be a typical octonion. We write  $\RE a=\alpha_0$  
for the real part,  
$\bar a=\alpha_0-\sum\nolimits^{7}_{i=1} \alpha_ie_i$ for the conjugation, 
$|a|=\big (\alpha^2_0+\sum\nolimits^{7}_{i-1}\alpha^2_i\big)^{1/2}$ fot the norm. 
Using \eqref{eq5.1}, one can easily check that 
\begin{equation}
\RE ab=\RE ba,\quad \overline{ab}=\overline{ba}, \quad 
|a|^2=a\bar a=\bar aa, \quad |ab|=|a|\,|b|. 
\label{eq5.2}
\end{equation}
The last equality in \eqref{eq5.2} implies that all algebras $\bR, \C, \H, \bO$  
are division algebras.
Notice also that by a theorem of Artin a subalgebra in $\bO$ generated 
by any two octonions is associative and isomorphic to one of the algebras 
$\H$, $\C$, or $\bR$, see \cite{3}.
 
The standard model of projective spaces over
the associative algebras $\F=\bR$, $\C$, $\H$ is well known, see, for example,  \cite{3, 7, 20, 36}. 
Let $\F^{n+1}$ be a linear
space of vectors $\mathbf a=(a_0,\dots,a_n)$, $a_i\in \F$, $1\le i\le n$
with the right multiplication by scalars $a\in\F$, the Hermitian
inner product 
\begin{equation}
(\mathbf a,\mathbf b) =\sum\limits^{n}_{i=0} \bar a_i b_i ,\quad
\mathbf a,\mathbf b \in \F^{n+1},
\label{eq5.3}
\end{equation}
and the norm $|\mathbf a|$,
\begin{equation}
|\mathbf a|^2=(\mathbf a, \mathbf a) =\sum\limits^{n}_{i=0} |a_i|^2. 
\label{eq5.4}
\end{equation}

In this case, in view of associativity of the algebras $\F=\bR$, $\C,\H$,  
a projective space $\F P^n$ can be defined as a set of 
one-dimensional (over  $\F$) subspaces in $\F^{n+1}$: 
\begin{equation}
\F P^n=\{ p(\mathbf a)=\mathbf a \F : \mathbf a \in \F^{n+1}, \, 
|\mathbf a|=1\} .
\label{eq5.5}
\end{equation}
The metric $\theta$ on $\F P^n$ is defined by 
\begin{equation}
\cos \frac 12 \theta (\mathbf a,\mathbf b)\!=\!|(\mathbf a,\mathbf b)|, \quad
\mathbf a,\mathbf b\in \F^{n+1}, \quad  |\mathbf a|\!=\!|\mathbf b|\!=\!1, \,\,  
0\le \theta (\mathbf a,\mathbf b)\le \pi,
\label{eq5.6}
\end{equation}
i.e.  $\frac 12 \theta (\mathbf a,\mathbf b)$ is the angle between  
the subspaces $p(\mathbf a)$ and $p(\mathbf b)$.
The transitive group of isometries   $U(n+1, \F)$ for the metric $\theta$ consists of 
nondegenerate linear transformations of the space  $\F^{n+1}$, preserving  
the inner product  \eqref{eq5.3}, and the stabilizer of a point is isomorphic to
the subgroup $U(n,\F)\times U(1,\F)$. Hence, 
\begin{equation}
\F P^n=U(n+1, \F)/ U(n,\F) \times U(1,\F).
\label{eq5.7}
\end{equation}
The groups $U(n+1,\F)$ can be easily determined (they have been indicated in section 2 
in the list of compact connected two-point homogeneous spaces). 
A Riemannian $U(n+1,\F)$-invariant structure corresponding to the metric $\theta$
can be also defined on the projective space \eqref{eq5.5}, and one can easily check that
these spaces are two-point homogeneous spaces.

There is another model where a projective space $\F P^n$, $\F=\bR, \C, \H$, is identified
with the set of orthogonal projectors onto the one-dimensional subspaces in  $\F^{n+1}$. 
This model admits a generalization to the octonionic projective plane $\bO P^2$ 
and in its terms the chordal metric can be naturally defined for all projective spaces.

Let $\HH (\F^{n+1})$ denote the set of all Hermitian ${(n+1)}\times (n+1)$ matrices 
with the entries in $\F$, $\F=\bR$, $\C,\H,\bO$, 
\begin{equation}
\HH (\F^{n+1})= \{ A=((a_{ij})) : a_{ij}=\overline{a}_{ji}, \;
a_{ij}\in \F, \, 0\le i,j\le n \}  
\label{eq5.8}
\end{equation}
with $n=2$ if $\F=\bO $.
It is clear that $\HH(\F^{n+1})$ is a linear space over $\bR$ of dimension 
\begin{equation}
m=\dim_{\bR}\HH (\F^{n+1})=\frac 12 (n+1) (d+2), \quad d = n d_0.
\label{eq5.9}
\end{equation}

The linear space $\HH (\F^{n+1})$ is equipped  with the symmetric real-valued inner product 
\begin{equation}
\langle A,B\rangle =\frac 12 \Tr (AB+BA)= \RE \Tr AB=
\RE \sum\limits^{n}_{i,j=0} a_{ij}\overline{b_{ij}}
\label{eq5.10}
\end{equation}
and the norm 
\begin{equation}
\| A\| =(\Tr A^2)^{1/2} =  \left(  \sum\limits^{n}_{i,j=0}
|a_{ij}|^2\right)^{1/2},
\label{eq5.11}
\end{equation}
here $\Tr A=\sum\nolimits^{n}_{i=0}a_{ii}$  denotes the trace of a matrix $A$. 
For the distance $\| A-B\|$ between matrices $A,B\in \HH (\F^{n+1})$, we have 
\begin{equation}
\| A-B\|^2 =\| A\|^2 +\| B\|^2 -2 \langle A,B\rangle.
\label{eq5.12}
\end{equation}
Thus, $\HH (\F^{n+1})$ can be thought of as the $m$-dimensional Euclidean space. 

If $\F\ne \bO$, the orthogonal projector  $\Pi_{\mathbf a}\in \HH (\F^{n+1})$ onto
a one-dimensional subspace  
$p(\mathbf a)=\mathbf a \F$, $\mathbf a=(a_0,\dots,a_n)\in \F^{n+1}$,
$|\mathbf a|=1$, can be given by $\Pi_{\mathbf a}=\mathbf a (\mathbf a,\cdot)$
or in the matrix form 
$\Pi_{\mathbf a}=[a_i\bar a_j]$, $0\le i,j\le n$. Therefore, the projective space 
\eqref{eq5.5} can be written as follows
\begin{equation}
\F P^n=\{ \Pi\in \HH (\F^{n+1}): \Pi^2=\Pi, \, \,  \Tr \Pi =1\} .
\label{eq5.13}
\end{equation}
The group of isometries $U(n+1,\F)$ acts on such projectors by
the formula $g(\Pi)=g\Pi g^{-1}$, $g\in U(n+1,\F)$. 

For the octonionic projective plane $\bO P^2$ the similar model is also known.
A detailed discussion of this model can be found
in \cite{3, 7, 20} including an explanation why octonionic projective
spaces $\bO P^n$ do not exist if $n>2$.
In this model one puts by definition 
\begin{equation}
\bO P^2=\{ \Pi \in \HH (\bO^3): \Pi^2=\Pi, \, \,  \Tr \Pi=1\}.
\label{eq5.14}
\end{equation}
Thus, the formulas \eqref{eq5.13} and \eqref{eq5.14} are quite similar. One can check that each 
matrix in \eqref{eq5.14} can be written as 
$\Pi_{\mathbf a}\in \bO P^2$ for a vector $\mathbf a=(a_0,a_1,a_2)\in \bO^3$, where 
$\Pi_{\mathbf a}=[a_i\bar a_j]$, 
$0\le i,j\le 2$, $|\mathbf a|^2=|a_0|^2+|a_1|^2+|a_2|^2=1$, and additionally  
$(a_0a_1)a_2=a_0(a_1a_2)$, see  \cite[Lemma 14.90]{20}. The additional condition 
means that the subalgebra in $\bO$ generated by the coordinates $a_0,a_1,a_2$
is associative. Using this fact, one can easily show that $\bO P^2$  is
a 16-dimensional compact connected Riemannian manifold, see \cite{3, 7, 20}. 

The group of nondegenerate linear transformations $g$ of the space $\HH(\bO^3)$ 
preserving the squares  $g(A^2)=g(A)^2$, $A\in \HH(\bO^3)$, is isomorphic to
the 52-dimensional exceptional Lie group  $F_4$. This group also preserves 
the trace, inner product \eqref{eq5.10} and norm \eqref{eq5.11} of matrices 
$A\in \HH (\bO^3)$. 
The group $F_4$ is transitive on  $\bO P^2$, and the stabilizer of a point is 
isomorphic to the spinor group $\Spin (9)$, see  \cite[Lemma 14.96 and Theorem 14.99]{20}.  
Hence, $\bO P^2=F_4/\Spin (9)$ is a homogeneous space, and one can prove that
$\bO P^2$ is a two-point homogeneous space. 

Notice that the relations $\Pi^2=\Pi, \, \,  \Tr \Pi=1 $ in \eqref{eq5.13}
and \eqref{eq5.14} are polynomial equations in the corresponding 
$m$-dimensional Euclidean space $\HH (\F^{n+1})$. Hence, 
the projective spaces $\bR P^n, \C P^n,  \H P^n$ and the octonionic 
projective plane $\bO P^2$ can be thought of as affine algebraic manifolds in $\bR^m$.

For our discussion we need to describe the structure of geodesics
in projective spaces. Such a description can be easily done in terms
of models \eqref{eq5.13} and \eqref{eq5.14}. It is known, see \cite{7, 21, 36}, that 
all geodesics on a two-point homogeneous space $Q(d,d_0)$ are closed
and homeomorphic to the unit circle.  The group of isometries is transitive
on the set of geodesics and the the stabilizer of a point is transitive
on the set of geodesics passing through this point. Therefore,
all geodesics have the same length $2\pi$ (under the normalization \eqref{eq1.1} 
for the invariant Riemannian distance). 

The inclusions $\bR\subset \C\subset \H\subset \bO$ induce the following inclusions 
of the corresponding projective spaces 
\begin{equation}
\F_1P^{n_1} \subseteq \F P^n, \quad \F_1\subseteq \F, \quad n_1\le n,
\label{eq5.15}  
\end{equation}
moreover, the subspace $\F_1P^{n_1}$ is a geodesic submanifold in $\F P^n$,
see \cite[Sec.~3.24]{7}. Particularly, the real projective line $\bR P^1$, 
homeomorphic to the unit circle $S^1$, is embedded as a geodesic into
all projective spaces $\F P^n$, 
\begin{equation}
S^1 \approx \bR P^1 \subset \F P^n,
\label{eq5.16}
\end{equation}
see \cite[Proposition 3.32]{7}. In \eqref{eq5.16} $n=2$ if $\F=\bO$. These facts
can also be immediately derived from a general description of geodesic submanifolds 
in Riemannian symmetric spaces, see \cite[Chap. VII, Corollary 10.5]{21}.

Using the models \eqref{eq5.13} and \eqref{eq5.14}, we can write the real 
projective line $\bR P^1$ as the following set of $2\times 2$ matrices: 
\begin{equation}
\bR P^1=\{\zeta (u), u\in \bR/\pi \bZ\} ,
\label{eq5.17}
\end{equation}
where
\begin{equation*}
\zeta (u)  \!=\! 
\begin{pmatrix}
\cos^2 u & \sin u \cos u  \\  \sin u \cos u & \sin^2 u
\end{pmatrix} \! 
\\
=\!
\begin{pmatrix}
\cos u & -\sin u \\ \sin u  & \cos u 
\end{pmatrix}    
   \begin{pmatrix}
1 & 0 \\ 0 & 0 
\end{pmatrix}
\begin{pmatrix}
\cos u & \sin u \\ \sin u & \cos u
\end{pmatrix}.
\end{equation*}   
For each $u\in \bR$ the matrix $\zeta (u)$ is an orthogonal projector onto   
the one-dimensional subspace $x\bR$, $x=(\cos u, \sin u)\in S^1$. 
The embedding $\bR P^1$ into $\F P^n$ can be written as the following set of 
$(n+1)\times (n+1)$ matrices: 
\begin{equation}
Z= \{ Z(u), u\in \bR /\pi \bZ\} \subset \F P^n,
\label{eq5.18}
\end{equation}
where
\begin{equation*}
Z(u)= 
\begin{pmatrix}
\zeta (u) & 0_{n-1,2}  \\  0_{2, n-1}  & 0_{n-1,n-1}
\end{pmatrix},
\end{equation*}
where $0_{k,l}$ denotes the zero matrix of size $k\times l$. 
The set of matrices \eqref{eq5.18}  is a geodesic in $\F P^n$. 
All other geodesics are of the form $g(Z)$, where $g\in G$ is an isometry
of the space $\F P^n$. 
The parameter $u$ in \eqref{eq5.18} and the geodesic distance $\theta$ on the space 
$\F P^n$ are related by 
\begin{equation}
\theta (Z(u), Z(0)) =2 |u|, \quad - \pi/2 <u\le  \pi/2,        
\label{eq5.19}
\end{equation}
and for all $u\in \bR$ this formula can be extended by periodicity.
Particularly, we have
\begin{equation*}
\theta (Z(u/2), Z(-u/2)) =
\begin{cases}
2\min \{ u, \pi -u\} \, \, &  \text{if} \quad 0\le u\le \pi , \\
2u \, \,  & \text{if} \quad 0\le u \le \pi/2 .
\end{cases}
\end{equation*}
Therefore, 
\begin{equation}
\theta (Z(v), Z(-v)) =
4v,  \quad 0\le v \le \pi/4. 
\label{eq5.20}
\end{equation}
The relation \eqref{eq5.20} will be needed in the next section.

Now, we define the chordal distance on projective spaces.
The formulas \eqref{eq5.13}, \eqref{eq5.14} and \eqref{eq5.11} imply 
\begin{equation}
\| \Pi\|^2 = \Tr \Pi^2 = \Tr \Pi =1.
\label{eq5.21}
\end{equation}
for any $\Pi\in \F P^n$. Therefore, the projective spaces $\F P^n$,
defined by \eqref{eq5.13} and \eqref{eq5.14}, are submanifolds in the unit sphere 
\begin{equation}
\F P^n\subset S^{m-1} =\{ A\in \HH (\F^{n+1}) :\| A\| =1\} \subset
\HH (\F^{n+1})\approx \bR^m.
\label{eq5.22}
\end{equation}
It fact, this is an embedding of $\F P^n$ into the $(m-2)$-dimensional sphere,
the intersection of the sphere $S^{m-1}$ with the hyperplane in $\HH(\F^{n+1})$
defined by $\Tr A=1$, see \eqref{eq5.21}.

The \emph{chordal distance } $\tau (\Pi_1,\Pi_2)$  between 
$\Pi_1,\Pi_2\in \F P^n$ is defined as the Euclidean distance \eqref{eq5.12}:
\begin{equation}
\tau (\Pi_1,\Pi_2)= \frac{1}{\sqrt{2}} \| \Pi_1-\Pi_2\| =
(1-\langle \Pi_1,\Pi_2\rangle )^{1/2}.
\label{eq5.23}
\end{equation}
The coefficient $1/\sqrt{2}$ is chosen to satisfy $\diam (\F P^n, \tau)=1$. 

It is clear from \eqref{eq5.23} that  $\tau (g(\Pi_1)$, $g(\Pi_2))=\tau (\Pi_1,\Pi_2)$  
for all isometries $g\in G$ of the space $\F P^n$.
Since $\F P^n$ is a two-point homogeneous space, for any 
$\Pi_1,\Pi_2\in \F P^n$ with $\theta (\Pi_1,\Pi_2)=2u$, 
$0\le u\le \frac12 \pi$, there exists $g\in G$, such that 
$g(\Pi_1)=Z(u)$, $g(\Pi_2)=Z(0)$.  From   \eqref{eq5.23}, \eqref{eq5.18} and \eqref{eq5.17}, 
we obtain 
$
\tau (Z(u),Z(0)) =\sin u =\sin \frac12\theta (\Pi (u),\Pi(0)).
$
Therefore, 
\begin{equation}
\tau (\Pi_1,\Pi_2)= \sin \frac12 \theta (\Pi_1,\Pi_2),
\label{eq5.24}
\end{equation}
as it was defined before in \eqref{eq2.4}.

Notice also that antipodal points $\Pi_+,\Pi_-\in \F P^n$, 
i.e.  $\theta (\Pi_+,\Pi_-)=\pi$ and $\tau (\Pi_+,\Pi_-)=1$, 
can be characterized by the orthogonality condition 
$\langle \Pi_+,\Pi_-\rangle =0$, see \eqref{eq5.23}, \eqref{eq5.24}.

\section{Proof of Theorem 2.1}\label{sec6}

The proof of Theorem~\ref{thm2.1} is relying on the following special representation
of the symmetric difference metric \eqref{eq1.13}, 
given earlier in see~\cite[Lemma 2.1]{30}. Here this representation is given 
in a form adapted to the chordal metric \eqref{eq5.23}. 

\begin{lemma}\label{lem6.1} Let the weight function $\eta$ be summarized  
on the interval $[0,\pi]$, then  
\begin{equation}
\theta^{\Delta} (\eta,y_1,y_2)=\frac12 \int\limits_{\M} | \sigma
(\theta(y_1,y)) - \sigma (\theta (y_2,y)) | \, d\mu (y)
\label{eq6.1}
\end{equation}
with the nonincreasing function 
\begin{equation}
\sigma(r)= \int\limits^{\pi}_{r} \eta (u) \, du.
\label{eq6.2}
\end{equation}

Particularly, if  $\M$  is a two-point homogeneous space $Q=Q(d,d_0)$
and the weight function $\eta^{\natural}(r)=\sin r$, then   
\begin{equation}
\theta^{\Delta} (\eta^{\natural}, y_1,y_2)= \int\limits_Q 
|\tau (y_1,y)^2-\tau (y_2,y)^2| \, d\mu (y),
\label{eq6.3}
\end{equation}
where $\tau (\cdot,\cdot)$ is the chordal metric {\rm (5.23)} on $Q(d,d_0)$. 
\end{lemma}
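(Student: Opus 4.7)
The plan is to derive the identity (6.1) directly from the definitions in Section 1 by swapping the order of integration, and then specialize via an elementary trigonometric computation to obtain (6.3).

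First I would start from (1.13) combined with (1.17), writing
\[
\theta^{\Delta}(\eta,y_1,y_2)=\frac12\int_{0}^{\pi}\!\!\int_{\M}\bigl|\chi(B_r(y_1),y)-\chi(B_r(y_2),y)\bigr|\,d\mu(y)\,\eta(r)\,dr.
\]
Since $\eta$ is assumed summable on $[0,\pi]$ and the integrand is bounded by $\eta(r)$, Fubini's theorem applies, and I can pull the $y$-integral outside. Using the symmetry relation (1.16), $\chi(B_r(y_i),y)=\chi_r(\theta(y_i,y))$, the inner integral for fixed $y$ becomes $\int_{0}^{\pi}|\chi_r(\theta(y_1,y))-\chi_r(\theta(y_2,y))|\,\eta(r)\,dr$.

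The second step is the elementary telescoping identity underlying the formula. Set $r_i=\theta(y_i,y)$ and assume without loss of generality $r_1\le r_2$. Since $\chi_r(r_i)=1$ iff $r_i<r$, the integrand $|\chi_r(r_1)-\chi_r(r_2)|$ equals the indicator of $r\in(r_1,r_2]$, so
\[
\int_{0}^{\pi}\bigl|\chi_r(r_1)-\chi_r(r_2)\bigr|\,\eta(r)\,dr=\int_{r_1}^{r_2}\eta(r)\,dr=\sigma(r_1)-\sigma(r_2)=\bigl|\sigma(r_1)-\sigma(r_2)\bigr|,
\]
where $\sigma$ is defined by (6.2); the final equality uses that $\sigma$ is nonincreasing (since $\eta\ge 0$). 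Substituting back gives exactly (6.1).

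For the specialization (6.3), I would plug in $\eta^{\natural}(r)=\sin r$ and compute
\[
\sigma(r)=\int_{r}^{\pi}\sin u\,du=1+\cos r=2\cos^2\tfrac{r}{2}=2-2\sin^2\tfrac{r}{2}.
\]
By the definition (5.24) of the chordal metric, $\sin\tfrac12\theta(y_i,y)=\tau(y_i,y)$, so $\sigma(\theta(y_i,y))=2-2\tau(y_i,y)^2$. Therefore $|\sigma(\theta(y_1,y))-\sigma(\theta(y_2,y))|=2|\tau(y_1,y)^2-\tau(y_2,y)^2|$, and the factor $\tfrac12$ in (6.1) cancels the factor $2$ to yield (6.3).

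There is essentially no hard step here; the only points requiring a little care are the justification of Fubini (immediate from summability of $\eta$ and boundedness of the characteristic functions) and the sign/ordering argument in the telescoping computation, which is handled by the monotonicity of $\sigma$. The trigonometric identity $1+\cos r=2\cos^2(r/2)=2-2\sin^2(r/2)$ then gives the clean reformulation in terms of $\tau^2$.
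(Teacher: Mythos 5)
Your proof is correct and follows essentially the same route as the paper's: both arguments swap the order of integration in \eqref{eq1.13}/\eqref{eq1.17} and reduce the inner $r$-integral of $|\chi_r(\theta_1)-\chi_r(\theta_2)|$ to $|\sigma(\theta_1)-\sigma(\theta_2)|$, then specialize via $\sigma^{\natural}(r)=1+\cos r=2-2\sin^2\tfrac r2$. The only (cosmetic) difference is that the paper expands $|\chi_1-\chi_2|=\chi_1+\chi_2-2\chi_1\chi_2$ and evaluates the term $2\sigma(\max\{\theta_1,\theta_2\})$ via the min/max identity, whereas you identify the difference of indicators directly as the indicator of the interval $(\theta_1,\theta_2]$; both are valid.
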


\begin{proof}[Proof] For brevity, we write $\theta (y_1,y)=\theta_1$ and 
$\theta(y_2,y)=\theta_2$. Using \eqref{eq1.13}, \eqref{eq1.16} and \eqref{eq1.17}, we obtain 
\begin{align}
&\theta^{\Delta} (\eta ,y_1,y_2)
\notag
\\
& = \frac12 \int\limits_{\M} \left( \int\limits^{\pi}_{0} (\chi  
(r-\theta_1) +\chi (r-\theta_2) -2\chi (r-\theta_1)\chi
(r-\theta_2))  \eta(r)\, dr \right) \, d\mu (y)
\notag
\\
& = \frac 12 \int\limits_{\M} (\sigma (\theta_1) +\sigma (\theta_2) 
-2\sigma (\max \{ \theta_1,\theta_2\} )) \, d\mu (y).
\label{eq6.4}
\end{align}
Since $\sigma$ is a nonincreasing function, we have 
\begin{equation}
2\sigma (\max\{ \theta_1,\theta_2\})\! =\! 2\min \{ \sigma (\theta_1),\sigma
(\theta_2)\} 
\!= \!\sigma (\theta_1)\!+\!\sigma (\theta_2)\! -\!|\sigma (\theta_1)\!-\!\sigma 
(\theta_2)| .
\label{eq6.5}
\end{equation}
Substituting \eqref{eq6.5} into \eqref{eq6.4}, we obtain \eqref{eq6.1}. 

If  $\eta^{\natural}(r)=\sin r$, then $\sigma^{\natural}(r)= 
2-2\sin^2 r/2$. Substituting this expression into \eqref{eq6.1} and using the definition
 \eqref{eq5.24}, we obtain \eqref{eq6.3}. 
\end{proof}

For completeness, we give in the beginning a very short proof of Theorem~\ref{thm2.1} 
in the case of spheres. 
\begin{proof}[Proof of Theorem~\ref{thm2.1} for spheres]  For the sphere
$S^d$ the chordal metric $\tau$ is defined \eqref{eq2.5}. We have
\begin{align}
& \tau (y_1,y)^2-\tau (y_2,y)^2 =\frac14 (\|y_1-y\|^2-\|y_2-y\|^2) 
\notag
\\
& = -\frac12 (y_1-y_2,y) =- \tau (y_1,y_2) (x,y), \, y_1,y_2\in S^d,
\label{eq6.6}
\end{align}
where $x=\|y_1-y_2\|^{-1}(y_1-y_2)\in S^d$. Substituting 
\eqref{eq6.6} into \eqref{eq6.3}, we obtain 
\begin{equation}
\theta^{\Delta}(\eta^{\natural},y_1,y_2)= \tau (y_1,y_2)
\int\limits_{S^d} |(x,y)| \, d\mu (y).
\label{eq6.7}
\end{equation}
It is clear that the integral in \eqref{eq6.7} is independent of $x\in S^d$.
This proves the equality \eqref{eq2.8} for $S^d$ with the constant 
$
\gamma (S^d) =\left( \int\nolimits_{S^d}  |(x,y)| \,d\mu (y)\right)^{-1}. 
$
\end{proof}

\begin{proof}[Proof of Theorem 2.1 for projective spaces]
We write $\Pi_1,\Pi_2,\Pi$ for points in the models of projective spaces \eqref{eq5.13}
and \eqref{eq5.14}.
With this notation, the relation \eqref{eq6.3} takes the form  
\begin{equation}
\theta^{\Delta}(\eta^{\natural},\Pi_1,\Pi_2)= \int\limits_{\F P^n}
|\tau (\Pi_1,\Pi)^2 -\tau (\Pi_2,\Pi)^2 | \, d\mu (\Pi).
\label{eq6.8}
\end{equation} 

Since $\F P^n$ is a two-point homogeneous space, for $\Pi_1,\Pi_2 \in \F P^n$ 
with $\theta (\Pi_1,\Pi_2)=4v$, $0\le v\le \pi/4$, there exists an isometry $g\in G$, 
such that $g(\Pi_1)=Z(v)$, $g(\Pi_2)=Z(-v)$, see \eqref{eq5.20}. Therefore,
\begin{equation}
\int\limits_{\F P^n} |\tau (\Pi_1,\Pi)^2-\tau (\Pi_2,\Pi)^2 | \, d\mu 
(\Pi)   
\\
=\int\limits_{\F P^n}|\tau (Z(v),\Pi)^2-\tau (Z(-v),\Pi)^2|\,d\mu (\Pi).
\label{eq6.9}
\end{equation}
From the definition \eqref{eq5.23}, we obtain 
\begin{align}
 \tau (Z(v), \Pi)^2\!-\!\tau (Z(-v), \Pi)^2
&\! =\!  \frac12 (\| Z(v)-\Pi\|^2\! -\! \| Z(-v)\!-\!\Pi \|^2) \notag
\\
& = \langle Z(v)-Z(-v), \Pi  \rangle .
\label{eq6.10}
\end{align}
The formulas \eqref{eq5.17} and \eqref{eq5.18} imply 
$$
Z(v)-Z(-v)=
\begin{pmatrix}
\zeta(v) -\zeta(-v) & 0_{n-1,2} \\
0_{2,n-1}  &  0_{n-1,n-1}
\end{pmatrix}
$$
and 
$$
\zeta(v)-\zeta(-v)=
\begin{pmatrix}
0  &  \sin 2u \\ \sin 2u & 0
\end{pmatrix}
= \sin 2u (\zeta_+ -\zeta_-), 
$$
where 
$$
\zeta_+= \frac12 
\begin{pmatrix}
1 & 1 \\ 1 & 1
\end{pmatrix} , \quad \zeta_-=\frac12 
\begin{pmatrix}
1  &-1 \\ -1 & 1
\end{pmatrix} .
$$
Therefore, 
\begin{equation}
Z(v)-Z(-v) =\sin 2v (Z_+-Z_-),
\label{eq6.11}
\end{equation}
where
$$
Z_{\pm} = 
\begin{pmatrix}
\zeta_{\pm} & 0_{n-1,2} \\  0_{2,n-1}  & 0_{n-1,n-1} 
\end{pmatrix} . 
$$
We have $Z^*_{\pm}=Z_{\pm}$, $Z^2_{\pm}=Z_{\pm}$,
$\Tr Z_{\pm}=1$, i.e. $Z_{\pm}\in \F P^n$, and 
$\langle Z_+,Z_-\rangle =0$, i.e. $Z_+$ and  $Z_-$ are antipodal points. 
Using \eqref{eq5.24}, we can write
$$
\tau (\Pi_1,\Pi_2)= \tau (Z(v),  Z(-v)) =\sin 2v,
$$
and the equality \eqref{eq6.11} takes the form
\begin{equation}
Z(v)-Z(-v)=\tau (\Pi_1,\Pi_2) (Z_+-Z_-).
\label{eq6.12}
\end{equation}
Substituting \eqref{eq6.12} into \eqref{eq6.10}, we find that   
\begin{equation}
\tau (Z(v),\Pi)^2 -\tau (Z(-v),\Pi)^2
= \tau (\Pi_1,\Pi_2)\langle Z_+-Z_-,\Pi \rangle  .
\label{eq6.13}
\end{equation}
Substituting \eqref{eq6.13} into \eqref{eq6.9} and using \eqref{eq6.8}, we obtain 
\begin{equation}
\theta^{\Delta}(\eta^{\natural}, \Pi_1,\Pi_2)= \tau
(\Pi_1,\Pi_2) \theta^{\Delta} (\eta^{\natural}, Z_+, Z_-),
\label{eq6.14}
\end{equation}
where 
\begin{equation}
\theta^{\Delta}(\eta^{\natural}, Z_+,Z_-)=
\int\limits_{\F P^n}  |\langle Z_+-Z_-,\Pi\rangle | \, d\mu (\Pi).
\label{eq6.15}
\end{equation}
The integral \eqref{eq6.15} is independent of $\Pi_1,\Pi_2$, 
This proves the equality \eqref{eq2.8} for  $\F P^n$ with the constant
$
\gamma (\F P^n) = \left( \int\nolimits_{\F P^n} | \langle Z_+-Z_-,
\Pi \rangle | \, d\mu (\Pi) \right)^{-1} 
$.
Notice that in this formula any pair of antipodal points in $\F P^n$ can be taken 
instead of $Z_+,Z_-$.
The proof of Theorem~\ref{thm2.1} is complete. 
\end{proof}

\section{Proof of Lemma 2.1}\label{sec7}

$\quad \quad $(i) In \eqref{eq1.23} we put $y_1=y_2=y$ to obtain  
\begin{equation}
\la_r(y,y)=v_r-v^2_r =v_rv'_r.
\label{eq7.1}
\end{equation}
Applying the Cauchy--Schwarz inequality to \eqref{eq1.6}, we obtain 
\begin{equation}
|\la_r(y_1,y_2)|\le (\la_r(y_1,y_2) \la_r(y_2,y_2))^{1/2}=
v_rv'_r.
\label{eq7.2}
\end{equation}
Using the weak invariance principle \eqref{eq1.29}, the formula \eqref{eq1.26} and 
the bound \eqref{eq7.2}, we obtain
\begin{equation}
\theta^{\Delta}_r(y_1,y_2)\le 2v_rv'_r.
\label{eq7.3}
\end{equation}
For $r\in [0,\pi]$, we have
\begin{equation}
\sin\frac12 r\simeq r, \quad \cos\frac12 r \simeq \pi -r.
\label{eq7.4}
\end{equation}
Substituting the bounds \eqref{eq2.3} for the volumes $v_r$ and $v'_r$ 
into \eqref{eq7.2} and \eqref{eq7.3} and using \eqref{eq7.4},
we obtain \eqref{eq2.14}. Integrating \eqref{eq2.14} with $\eta\in W (d+1,d_0+1)$, 
we obtain \eqref{eq2.15}. 

(ii) We can assume that $0<r<\pi$, since  $\theta^{\Delta}_r(y_1,y_2)=0$ 
identically, if $r=0$ or  $r=\pi$.  For brevity, we write $\delta =\theta (y_1,y_2)/2$. 
The parameters $r$ and  $\delta$ vary in the region $0<r< \pi$, $0\le \delta \le \pi/2$. 
This rectangular region can be represented as a disjoint union of three triangular
regions: 

(a) $0<r<\delta$, $0\le \delta \le  \pi/2$,

(b) $\pi >r\ge \pi -\delta$, $0\le \delta \le \pi/2$,

(c) $r>\delta$, $0<r<\pi-\delta$, $0\le \delta <  \pi/2$.

In each of these regions we shall prove the bound \eqref{eq2.16}.
Notice that for $r\in [0,\pi]$, the function $\sin r/2$ is increasing while 
$\cos r/2$ is decreasing. 

{\it Case (a)}. Using the relations \eqref{eq1.25}, \eqref{eq2.2}, \eqref{eq2.3} 
and \eqref{eq7.4}, we obtain 
\begin{align}
\theta^{\Delta}_r (y_1,y_2)  \le v_r   & \simeq \int\limits^{r}_{0}
(\sin\frac12 u)^{d-1} (\cos\frac12 u)^{d_0-1}\, du  \notag
\\
& \lesssim \int\limits^{r}_{0} (\sin\frac12 u)^{d-1}\,du
\simeq (\sin\frac12 r)^{d-1}r  \notag
\\
& \lesssim  (\sin\frac12 r)^{d-1} (\cos\frac12 r)^{d_0-1}\delta.
\label{eq7.5}
\end{align}

{\it Case (b)}. Similarly, from \eqref{eq1.25}, \eqref{eq2.2}, \eqref{eq2.3} 
and \eqref{eq7.4}, we obtain  
\begin{align}
\theta^{\Delta}_r (y_1,y_2)  \le v'_r  & \simeq \int\limits^{\pi}_{r}
(\sin\frac12 u)^{d-1} (\cos\frac12 u)^{d_0-1}\, du  \notag
\\
& \lesssim \int\limits^{\pi}_{r} (\cos\frac12 u)^{d_0-1}\,du
\simeq (\cos\frac12 r)^{d_0-1}  (\pi -r)  \notag
\\
& \lesssim  (\sin\frac12 r)^{d-1} (\cos\frac12 r)^{d_0-1} \delta
\label{eq7.6}
\end{align}

{\it Case (c)}. Since $0<\theta (y_1,y_2)<\pi$, there exists the unique geodesic 
$\gamma \subset Q(d,d_0)$ 
of shortest length $\theta (y_1,y_2)$ joining points $y_1,y_2$, see \cite[Chap. VII, Sec.~10]{21}. 
Let $y_0$ denote its midpoint, i.e. $y_0\in \gamma$, $\theta(y_1,y_0)=\theta (y_2,y_0)=\delta$. 
The triangle inequality for the metric $\theta$ implies that the ball $B_{r-\delta}(y_0)$  
is contained in the intersection $B_r(y_1)\cap B_r(y_2)$. Hence 
\begin{equation}
\mu(B_r(y_1)\cap B_r(y_2))\ge v_{r-\delta}.
\label{eq7.7}
\end{equation}

Using again the relations \eqref{eq1.25}, \eqref{eq2.2}, \eqref{eq2.3} 
together with \eqref{eq7.7}, we obtain 
\begin{align}
&\theta^{\Delta}_r (y_1,y_2)\le v_r-v_{r-\delta} \simeq 
\int\limits^{r}_{r-\delta} (\sin\frac12u)^{d-1} 
(\cos\frac12u)^{d_0-1}\,  du   \notag
\\
& \lesssim
(\sin \frac12 r)^{d-1}(\cos\frac12(r-\delta))^{d_0-1}\simeq
(\sin\frac12r)^{d-1}(\pi-r+\delta)^{d_0-1}  \notag
\\
& \simeq (\sin\frac12 r)^{d-1}(\pi-r)^{d_0-1} 
\left( 1+\frac{\delta}{\pi -r}\right)^{d_0-1} \lesssim
(\sin\frac12 r)^{d-1}(\pi-r)^{d_0-1}\delta  \notag
\\
& \simeq (\sin\frac12 r)^{d-1} (\cos \frac12 r)^{d_0-1} \delta. 
\label{eq7.8}
\end{align}

Now, the bound \eqref{eq2.16} follows from the bounds 
\eqref{eq7.6} -- \eqref{eq7.8}. 
Integrating \eqref{eq2.16} with $\eta\in W(d,d_0)$, we obtain the bound \eqref{eq2.17}. 
The proof of Lemma~\ref{lem2.1} is complete.

\section*{III. Spherical functions and bounds for discrepancies and sums of distances}

\section{Preliminaries: Commutative spaces and spherical functions}\label{sec8}

In this section we outline  general facts on harmonic analysis 
on the two-point homogeneous spaces $Q(d,d_0)$.
The spaces $Q(d,d_0)$ belong to a specific and very important class of
commutative spaces. The general theory of commutative spaces can be found 
in \cite{37}, see also \cite{22} and \cite[vol.~III, Chap. 17]{35}.
For compact groups this theory is rather simple. We outline the necessary
facts in the form convenient in the subsequent calculations.

Let $G$ be a compact group and $K\subset G$ a closed subgroup.
Denote by  $\mu_G$ and $\mu_K$ Haar measures on the groups 
$G$ and $K$, correspondingly, $\mu_G(G)=\mu_K(K)=1$. As before,
$\mu$ denotes the invariant measure on the homogeneous space
$Q=G/K$, and $\mu_G=\mu_K \times \mu$.
We write $L_q(G)$, $q=1,2$, for the space of functions on $G$ 
integrable with the power $q$ with respect to the Haar measure,
$L_q(G/K)$ and $L_q(K\setminus G/K)$ for the subspaces of functions
in $L_q(G)$ satisfying  $f(gk)=f(g)$, $k\in K$, and, correspondingly, 
$f(k_1gk_2)=f(g),k_1,k_2\in K$. Obviously, functions in
these subspaces can be thought of as functions on $Q=G/K$.
The spaces $L_1(K\setminus G/K)\subset L_1(G/K)\subset L_1(G)$ are associative 
Banach algebras with the convolution product
\begin{equation}
f_1*f_2(g) =\int\limits_G f_1(gh^{-1})f_2(h)\,d \mu_G(h).
\label{eq8.1}
\end{equation}
If the algebra $L_1(K\setminus G/K)$ is commutative, the pair of groups $K\subset G$
is called a \emph{Gelfand pair}  and the corresponding homogeneous space $Q=G/K$ 
is called a \emph{commutative space}, see \cite{37}. 
Two large classes of commutative spaces are Riemannian symmetric spaces 
and two-point homogeneous spaces, see \cite{37, 22}. 
The spaces $Q(d,d_0)$  belong to both of these classes.

Consider the following unitary representation of a group $G$  
in the space $L_2(G/K)$
\begin{equation}
T(g)f(h)=f(g^{-1}h), \quad f\in L_2(G/K),\quad g,h\in G.
\label{eq8.2}
\end{equation}
and its decomposition into the orthogonal sum  
\begin{equation}
T=\widehat{\bigoplus\limits_{l\ge 0}} T_l, \quad   \quad 
L_2(G/K) =  \widehat{\bigoplus\limits_{l\ge 0}} V_l
\label{eq8.3}
\end{equation}
of unitary irreducible representations $T_l$ in finite-dimensional spaces
$V_l$. Let $m_l=\dim V_l$, and $(\cdot ,\cdot)$ denote the inner product in $V_l$.

If $Q=G/K$ is a commutative space, then the irreducible representations $T_l$ 
occurring in (8.3) are pair-wise nonequivalent and each  subspace $V_l$ in \eqref{eq8.3} 
contains a single $K$-invariant unit vector $\mathbf e^{(l)}$, i.e. 
$T_l(k)\mathbf e^{(l)}=\mathbf e^{(l)}$ for all $k\in K$. 

Fix an orthonormal basis $\mathbf e_1,\dots,\mathbf e_{m_l}$ in the space $V_l$,
such that $\mathbf e_1=\mathbf e^{(l)}$ and define the matrix elements
$t^{(l)}_{ij}(g)=(T_l(g)\mathbf e_i,\mathbf e_j)$. Then, we have
\begin{equation}
t^{(l)}_{ij}(g_1g_2)=  \sum\limits^{m_l}_{p=1} t^{(l)}_{ip} (g_1)
t^{(l)}_{pj}(g_2) \,\, \mbox{and} \,\,
t^{(l)}_{ij}(g^{-1}) = \overline{t^{(l)}_{ji}(g)}. 
\label{eq8.4}
\end{equation}
We also have the orthogonality relations
\begin{equation}
\int\limits_G t^{(l)}_{ij} (g) \overline{t^{(l')}_{ij}(g)}\, 
d\mu_G (g) = m^{-1}_{l} \delta_{ll'}\delta_{ii'}\delta_{jj'},
\label{eq8.5}
\end{equation}
where $\delta_{ab}$ is Kronecker's symbol.
The sets of functions  $\{ m^{1/2}_{l}t^{(l)}_{1j}(g)$, $j=1,\dots,m_l$, $l\ge 
0\}$ and $\{m^{1/2}_{l}t^{(l)}_{11}(g), l\ge 0\}$ are orthonormal bases in
the spaces $L_2(G/K)$ and $L_2(K\setminus G/K)$, correspondingly, 
see \cite[vol.~I, Sec.~2.3]{35} ( notice that in \cite{35} the subgroup $K$ in a Gelfand
pair $K\subset G$ is called \emph{massive} ).

The matrix elements $\varphi_l(g)=t^{(l)}_{11} (g)$  are called  
\emph{zonal spherical functions} or simply \emph{spherical functions} 
(the matrix elements $t^{(l)}_{1j}(g)$, $j=2,\dots,m_l$
are called associated spherical functions).
The definition of $\varphi_l(g)$ and the formula \eqref{eq8.4} immediately imply  
that all spherical functions are continuous, $\varphi_l(e)=1$, 
where $e$ is the unit element in $G$,  
$| \varphi_l(g)|\le 1$ for all $g\in G$,  
\begin{equation}
\varphi_l(g_1g^{-1}_{2}) = \sum\limits^{m_l}_{j=1} 
t^{(l)}_{1j}(g_1)
\overline{t^{(l)}_{1j}(g_2)}, \,\, \mbox{and} \,\,
\varphi_l(g)  = \overline{\varphi_l(g^{-1})}. 
\label{eq8.6}
\end{equation}
It follows from \eqref{eq8.6} that $\varphi_l$ is positive definite: 
\begin{equation}
\sum\limits_{1\le i,j\le N} \overline{c_i}  c_j \varphi_l (g^{-1}_{i} 
g_j)\ge 0
\label{eq8.7}
\end{equation}
for any $g_1,\dots, g_N\in G$ and any complex numbers
$c_1,\dots, c_N$.

From \eqref{eq8.1}, \eqref{eq8.5} and \eqref{eq8.6}, we obtain 
the following relation for the convolution of two spherical functions
\begin{equation}
(\varphi_l* \varphi_{l'})(g) =\delta_{ll'}m^{-1}_{l} \varphi_l(g).
\label{eq8.8}
\end{equation}
Putting $g=e$ in \eqref{eq8.8}, we obtain the following formula for the dimensions $m_l$
of irreducible representations in \eqref{eq8.3}
\begin{equation}
m_l = \left(\int\limits_G |\varphi_l (g)|^2 \, d\mu_G (g) \right)^{-1}. 
\label{eq8.9}
\end{equation}

Functions $f\in L_2(K\setminus G/K)$ have the following expansions
\begin{equation}
f(g)\sim \sum\limits_{l\ge 0} m_lc_l (f) \varphi_l(g),
\label{eq8.10}
\end{equation}
where $\sim$ denotes the $L_2$-convergence,  Fourier coefficients are given by 
\begin{equation}
c_l(f)=\int\limits_G f(g) \overline{\varphi_l (g)} \, d\mu_G(g),
\label{eq8.11}
\end{equation}
and Parseval's equality has the form
$
\int\nolimits_G |f(g)|^2 \, d\mu_{G}(g)= \sum\nolimits_{l\ge 0} m_l\,|c_l(f)|^2.
$
Actually, this is the Peter--Weyl theorem written for the 
space $L_2(K\setminus G/K)$, see \cite[vol.~I, Chap. 2]{35}

Substituting the expansion \eqref{eq8.9} for two functions $f_1,f_2\in L_2(K\setminus G/K)$ 
into \eqref{eq8.1} and using the relation \eqref{eq8.8}, we obtain 
\begin{equation}
f_1* f_2(g) = \sum\limits_{l\ge 0} m_l\, c_l(f_1)\,c_l(f_2)\, \varphi_l(g).
\label{eq8.13}
\end{equation}
Applying the Cauchy--Schwarz inequality to \eqref{eq8.13}, 
we observe
that the series \eqref{eq8.13} converges absolutely. Since the spherical functions
$\varphi_l$ are continuous and  $|\varphi_l(g)|\le 1$, we conclude that
the convolution $f_1*f_2$ is a continuous function.

The facts listed above are true for all compact commutative spaces.
Now we wish to specify these facts for two-point 
homogeneous spaces.

Let $K\subset G$ be compact groups and $Q=G/K$  a  two-point homogeneous space with 
a $G$-invariant metric $\theta$. Suppose that $K$ is the stabilizer of 
a fixed point $y_0\in Q$.  
It follows from the definition, see section 2, that the subgroup $K$ is transitive
on each sphere  $\Sigma_r(y_0)=\{y:\theta(y,y_0)=r\}\subset Q$,
$r\in \R$.
Thus, any function $f\in L_q(K\setminus G/K)$, as a function on $Q$,
is constant on each sphere $\Sigma_r(y_0)$, and we can write 
\begin{equation}
f(g)=F(\theta(g y_0,y_0))
\label{eq8.14}
\end{equation}
with a function $F(r)$, $r\in \R$. In other words, the set of double cosets 
$K\setminus G/K$  
is in one-to-one correspondence with the set of radii $\R$.

Using \eqref{eq8.14}, the convolution \eqref{eq8.1} can be written in the form
\begin{align}
(f_1*f_2)(g^{-1}_{1}g_2) & =\int\limits_G F_1(\theta(g_1y_0,gy_0))
F_2 (\theta (g y_0,g_2y_0)) \, d\mu (g)
\notag
\\
&  = \int\limits_Q F_1(\theta(y_1,y)) F_2(\theta(y,y_2)) \, d\mu (y),
\label{eq8.15}
\end{align}
where $y_1=g_1y_0$, $y_2=g_2y_0$. 

For a function of the form \eqref{eq8.14} we have
\begin{equation}
\int\limits_G |f(g)|^2\,d\mu_G(g) =
\int\limits_Q |F(\theta(y,y_0))|^2 \, d\mu (y) =
\int\limits_{\R} |F(r)|^2 \, dv_r,
\label{eq8.16}
\end{equation}
where the last integral is thought of as a Stieltjes integral with
the nondecreasing function $v_r=\mu(B_r(y_0))$, $r\in \R$.
It follows from \eqref{eq8.14} and \eqref{eq8.16} that the mapping $f\to F$ 
is an isometry 
of the space $L_2(K\setminus G/K)$ onto the space $L_2(\R,v_r)$
of functions $F(r)$, $r\in \R$, with the norm 
$
\| F\|= (\int\nolimits_{\R} |F(r)|^2 \, dv_r)^{1/2}.
$

Since the spherical functions $\varphi_l\in L_2(K\setminus G/K)$, 
they can be written in the form \eqref{eq8.14}:
\begin{equation}
\varphi_l(g)=\Phi_l(\theta(gy_0,y_0)),
\label{eq8.17}
\end{equation}
where $\Phi_l\in L_2(\R,v_r)$, and putting $y_1=g_1y_0$, $y_2=g_2y_2$, $g_1,g_2\in G$,
we can write 
\begin{equation}
\varphi_l(g^{-1}_{1}g_2) =\Phi_l (\theta(g_1y_0, g_2y_0)) =\Phi_l
(\theta (y_1,y_2)).
\label{eq8.18}
\end{equation}
It follows from the properties of $\varphi_l$ that $\Phi_l$ are continuous and real-valued, 
$\Phi_l(0)=1$, $|\Phi_l(r)|\le 1$, $r\in \R$.
The set of functions $\{ m^{1/2}_{l} \Phi_l, l\ge 0\}$ is an orthonormal basis
in the space $L_2(\R,v_r)$ and the expansion \eqref{eq8.10} for  $F\in L_2(\R,v_r)$ 
takes the form 
\begin{equation}
F(r)\sim \sum\limits_{l\ge 0}  m_l c_l(F) \,  \Phi_l(r)
\label{eq8.19}
\end{equation}
with the Fourier coefficients  
\begin{equation}
c_l(F)=\int\limits_{\R} F(r) \Phi_l(r) \, dv_r
\label{eq8.20}
\end{equation}
and Parseval's equality
$
\int\nolimits_{\R}|F(r)|^2 \, dv_r=\sum\nolimits_{l \ge 0} m_l \, |c_l (F) |^2.
$

Comparing the relations  \eqref{eq8.13} and \eqref{eq8.15},  we arrive at 
the following formula
\begin{equation}
\int\limits_Q F_1(\theta(y_1,y)) F_2(\theta(y,y_2)) \, d\mu (y)
= \sum\limits_{l\ge 0} m_lc_l(F_1)c_l(F_2)\Phi_l(\theta (y_1,y_2)).  
\label{eq8.21}
\end{equation}

For the spaces $Q=Q(d,d_0)$ the matrix elements $t^{(l)}_{1j} (g)$ are 
eigenfunctions of the Laplace--Beltrami operator on $Q$ and the
spherical functions $\varphi_l(g)=t^{(l)}_{11} (g)$ are eigenfunctions
of the radial part of this operator and can be found explicitly,
see 
\cite[p.~178]{19}, 
\cite[Chap.~V, Theorem.~4.5]{22},
\cite[pp.~514--512, 543--544]{24}, \cite[Theorem.~11.4.21]{37}.
For the functions  $\Phi_l$ in \eqref{eq8.17}, we have
\begin{equation}
\Phi_l(r) =\Phi^{(\alpha,\beta)}_l(r) = 
\frac{P^{(\alpha,\beta)}_l(\cos r)}{P^{(\alpha,\beta)}_l(1)}
,\quad r\in \R = [0, \pi ],
\label{eq8.22}
\end{equation}
where $P^{(\alpha,\beta)}_l(z)$ are the standard Jacobi polynomials of degree $l$ 
normalized by 
\begin{equation}
P^{(\alpha,\beta)}_l(1) =
\begin{pmatrix} \alpha +l \\ l  \end{pmatrix}  =
\frac{(\alpha +1)\dots(\alpha+l)}{l!} \simeq l^{\alpha},
\label{eq8.23}
\end{equation}
see \cite{34}.  The parameters $\alpha, \beta$ in \eqref{eq8.23} and
the dimensions $d$, $d_0$ in $Q(d,d_0)$ are related by
\begin{equation}
\alpha =\frac12 d - 1,\quad \beta  =\frac12 d_0  -1
\label{eq8.24}
\end{equation}
In what follows, we use the parameters $\alpha,\beta$ along with 
the dimensions $d$, $d_0$, 
assuming they are related by \eqref{eq8.24}. With this assumption we have 
$\alpha \ge \beta \ge - 1/2$ always, since $d$ and $ d_0 \ge 1$. 
Notice that $|P^{(\alpha,\beta)}_l (z)| \le P^{(\alpha,\beta)}_l (1)$
for $z\in [-1,1]$ and $\alpha \ge \beta \ge - 1/2$.

We have the following orthogonality relations for Jacobi polynomials,
see \cite[Eq.~(4.3.3)]{34},
\begin{align}
& \int\limits^{\pi}_{0}P^{(\alpha,\beta)}_l (\cos u)
P^{(\alpha,\beta)}_{l'}(\cos u) (\sin \frac12 u)^{d-1} 
(\cos \frac 12 u)^{d_0-1}\, du
\notag
\\
& = ( \frac12 )^{\alpha+\beta+1} \int\limits^1_{-1}
P^{(\alpha,\beta)}_l(z) P^{(\alpha,\beta)}_{l'} (z)
(1-z)^{\alpha}(1+z)^{\beta} \, dz =M^{-1}_l \delta_{ll'}, 
\label{eq8.25}
\end{align}                                               
where $M_0=\kappa(d,d_0)$ and 
\begin{equation}
M_l=(2l+\alpha+\beta+1) 
\frac{\Gamma (l+1)\Gamma (l+\alpha+\beta+1)}{\Gamma(l+\alpha+1)
\Gamma (l+\beta+1)} \simeq l, \; \; l\ge 1.
\label{eq8.26}
\end{equation}
Substituting the expressions for spherical functions \eqref{eq8.17}, \eqref{eq8.22}  
into 
the formula \eqref{eq8.9} and using \eqref{eq8.25}, we obtain the following explicit formula
for the dimensions $m_l$ of irreducible representations in \eqref{eq8.3}:
\begin{equation}
m_l=M_l\,B(d/2,d_0/2) 
\begin{pmatrix} \alpha +l \\ l \end{pmatrix}^2 \simeq l^{d-1} .
\label{eq8.27}
\end{equation}

For functions $F\in L_2([0,\pi],v_r)$ the expansion \eqref{eq8.19} takes the form 
\begin{equation}
F(r) \sim \sum\limits_{l\ge 0} M_l\, C_l(F)\, P^{(\alpha,\beta)}_l (\cos r),
\label{eq8.28}
\end{equation}
with the Fourier-Jacobi coefficients  
\begin{equation}
C_l(F)=\int\limits^{\pi}_{0} F(u) P^{(\alpha,\beta)}_l  (\cos u)\,
(\sin \frac12u)^{d-1}\, (\cos \frac12u)^{d_0-1}\,du. 
\label{eq8.29}
\end{equation}
and Parseval's equality 
$
\int\nolimits_{\R}|F(r)|^2 \, dv_r=\kappa(d,d_0) \sum\nolimits_{l\ge 0} M_l \,|C_l(F)|^2.
$
The Fourier--Jacobi coefficients \eqref{eq8.29} and Fourier coefficients \eqref{eq8.20}
are related by
\begin{equation}
c_l(F)=C_l(F)\, \frac{\kappa(d,d_0)}{P^{(\alpha,\beta)}_l(1)}, \quad l\ge 0.
\label{eq8.30}
\end{equation}

Using the relations \eqref{eq8.22} and \eqref{eq8.30}, 
we can write the formula \eqref{eq8.21} in the form   
\begin{align}
&\int\limits_Q F_1(\theta (y_1,y))\, F_2 (\theta(y,y_2)) \, d\mu (y)
\notag
\\
& =  \kappa(d,d_0)\, \sum\limits_{l\ge 0} M_l\, C_l (F_1)\, C_l (F_2)\,
\frac{P^{(\alpha,\beta)}_l(\cos \theta (y_1,y_2))}{P^{(\alpha,\beta)}_l(1)}.
\label{eq8.31}
\end{align}
This formula will be used in the next section to obtain spherical function expansions 
for discrepancies and metrics .

The condition of positive definiteness \eqref{eq8.7} for the spherical 
functions \eqref{eq8.17}, \eqref{eq8.23} will be used in section~\ref{sec11} 
in the following special form
\begin{equation}
\varphi_l[\D_N] =\sum\limits_{x_1,x_2\in \D_N} 
\frac{P^{(\alpha,\beta)}_l(\cos \theta 
(x_1,x_2))}{P^{(\alpha,\beta)}_l(1)} \ge 0,
\label{eq8.32}
\end{equation}
where $\D_N\subset Q (d,d_0)$ is an arbitrary $N$-point subset.
Obviously, the conditions \eqref{eq3.2}, \eqref{eq3.3} in the definition 
of $t$-designs $\D_N\subset Q(d,d_0)$
are equivalent to the following equalities 
\begin{equation}
\varphi_l[\D_N] =0, \quad l=0,1,\dots,t,
\label{eq8.33}
\end{equation}
see also \cite{4, 24}.
The relations \eqref{eq8.33} can 
be used as an alternative to the definition of $t$-designs given 
before in section 3, see \cite{4, 24}.

\section{Spherical function expansions for discrepancies and metrics}\label{sec9}

In this section we obtain explicit spherical function expansions 
for the kernels \eqref{eq1.6}, \eqref{eq1.8} and the symmetric difference 
metrics \eqref{eq1.13}, \eqref{eq1.14} 
on the spaces $Q(d,d_0)$. The coefficients of these expansions will be estimated 
in the next section. 

First of all, we recall the main facts on Jacobi polynomials 
$P^{(\alpha,\beta)}_l(z)$, $z\in [-1,1]$, $\alpha \ge -1/2$,
$\beta \ge -1/2$, as $l\to \infty$. It is known, see \cite{34}, that 
Jacobi polynomials are behaved extremely irregularly on 
the interval $z\in [-1,1]$: inside the interval they oscillate and are of 
order $l^{-1/2}$, while in neighborhoods of the end points $z=1$ and $z=-1$  
they increase rapidly up to the quantities of order $l^{\alpha}$ and $l^{\beta}$, 
correspondingly. It is convenient to introduce the following function 
to describe such a behavior of Jacobi polynomials:
\begin{equation}
J^{(\alpha,\beta)}_l(r)= (\sin \frac12 r)^{\alpha+\frac12} (\cos \frac12 r)^{\beta +\frac12}
P^{(\alpha,\beta)}_l(\cos r), \quad r\in [0,\pi].
\label{eq9.1}
\end{equation}
We have the following two bounds.

(a) In the interval $r\in [c_0l^{-1},\pi-c_0l^{-1}]$, where $c_0>0$ is an arbitrary constant, 
we have the asymptotic formula 
\begin{equation}
J^{(\alpha,\beta)}_l (r) = (\pi l)^{-1/2}
\{ \cos [(l+l_0)r+r_0]+O((l\sin r)^{-1})\},
\label{eq9.2}                            
\end{equation}
where $l_0=(\alpha+\beta+1)/2$, $r_0=-\pi (2\alpha+1)/4$, see \cite[Theorem.~8.21.3]{34}.

(b) In the intervals $r\in [0,c_0l^{-1}]$ and $r\in [\pi -c_0l^{-1}, \pi]$, we have the bound
$J^{(\alpha,\beta)}_l(r)=O(l^{-1/2})$, see \cite[Theorem.~7.32.2]{34}.  This bound together 
with \eqref{eq9.2} implies  the following bound
\begin{equation}
|J^{(\alpha,\beta)}_l(r)|  <c(l+1)^{-1/2}, \quad l\ge 0,
\label{eq9.3}
\end{equation}
which holds uniformly for all $r\in [0,\pi]$ with the constant $c$ 
depending only on $\alpha, \beta$ and $c_0$. 

Consider the measure of the intersection of two balls
$B_r(y_1)$ and $B_r(y_2)$ in the space $Q=Q(d,d_0)$
\begin{equation}
\mu_r(y_1,y_2)=\mu(B_r(y_1)\cap B_r(y_2))
=  \int\limits_Q \chi_r(\theta (y_1,y))\chi_r(\theta (y,y_2)) \,
d \mu (y),
\label{eq9.4}
\end{equation}     
where $\chi_r(\cdot)$ is the characteristic function of the interval
$[0,r]$,  $0\le r\le \pi$, see \eqref{eq1.16}. 

\begin{lemma}\label{lem9.1} The kernel \eqref{eq9.4} has the following 
spherical function expansion 
\begin{equation}
\mu_r(y_1,y_2)
= v^2_r+\kappa(d,d_0) \sum\limits_{l\ge 1} l^{-2} M_la_l(r)
\frac{P^{(\alpha,\beta)}(\cos\theta (y_1,y_2))}{P^{(\alpha,\beta)}_l(1)}, 
\label{eq9.5}
\end{equation}
where $v_r=\mu (B_r (y))$  and 
\begin{align}
a_l(r)& =(\sin\frac 12 r)^{2d}(\cos \frac12 r)^{2d_0}
\left\{ P^{(\alpha+1,\beta+1)}_{l-1} (\cos r)\right\}^2
\notag
\\
& = (\sin \frac12 r)^{d-1} (\cos \frac 12 r)^{d_0-1}
\left\{J^{(\alpha+1,\beta+1)}_{l-1}  (r) \right\}^2.
\label{eq9.6}
\end{align}
The coefficients in \eqref{eq9.5}  satisfy
\begin{equation}
M_la_l(r)\le c (\sin \frac 12 r)^{d-1} (\cos \frac12 r)^{d_0-1}
\label{eq9.7}
\end{equation}
with a constant depending only on $d$ and $d_0$. 
Furthermore, we have the equality
\begin{equation}
\kappa(d,d_0)\sum\limits_{l\ge 1} l^{-2} M_la_l(r) = v_r-v^2_r=v_rv'_r.
\label{eq9.8}
\end{equation}
\end{lemma}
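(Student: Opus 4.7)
My strategy is to view $\mu_r(y_1,y_2)$ as a single instance of the master convolution identity (8.31). With $F_1 = F_2 = \chi_r$ (the characteristic function of $[0,r]$), the definitions (9.4) and (1.16) identify the left-hand side of (8.31) with $\mu_r(y_1,y_2)$, giving
\[
\mu_r(y_1,y_2) = \kappa(d,d_0)\sum_{l\ge 0} M_l\,|C_l(\chi_r)|^2\,\frac{P^{(\alpha,\beta)}_l(\cos\theta(y_1,y_2))}{P^{(\alpha,\beta)}_l(1)}.
\]
The $l=0$ term already yields $v_r^2$: since $P^{(\alpha,\beta)}_0 \equiv 1$, (2.2) gives $C_0(\chi_r) = v_r/\kappa(d,d_0)$, and $M_0 = \kappa(d,d_0)$ (from the line after (8.26)), so the $l=0$ contribution is $\kappa(d,d_0)\cdot\kappa(d,d_0)\cdot v_r^2/\kappa(d,d_0)^2 = v_r^2$. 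The lemma thus reduces to an explicit formula for $C_l(\chi_r)$ when $l\ge 1$.

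For $l\ge 1$ the key tool is the standard Jacobi derivative identity (see \cite{34}, \S 4.10)
\[
\frac{d}{dz}\!\left[(1-z)^{\alpha+1}(1+z)^{\beta+1}P^{(\alpha+1,\beta+1)}_{l-1}(z)\right] = -2l\,(1-z)^{\alpha}(1+z)^{\beta}P^{(\alpha,\beta)}_l(z).
\]
Substituting $z = \cos u$ in (8.29) and using $d-1 = 2\alpha+1$, $d_0-1 = 2\beta+1$ converts the weighted integrand into $-2^{-\alpha-\beta-1}P^{(\alpha,\beta)}_l(z)(1-z)^\alpha(1+z)^\beta\,dz$ over $z\in[\cos r,1]$. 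Integrating via the identity above, the boundary term vanishes at $z=1$ because $\alpha+1\ge 1/2>0$, and the contribution at $z=\cos r$ gives
\[
C_l(\chi_r) = \frac{1}{l}\,(\sin\tfrac12 r)^{d}(\cos\tfrac12 r)^{d_0}\,P^{(\alpha+1,\beta+1)}_{l-1}(\cos r).
\]
Squaring yields $|C_l(\chi_r)|^2 = l^{-2}a_l(r)$ with $a_l$ as in (9.6), and (9.5) follows.

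The bound (9.7) then falls out of the second form of $a_l(r)$ in (9.6): applying the uniform bound (9.3) with shifted parameters $\alpha+1,\beta+1$ gives $|J^{(\alpha+1,\beta+1)}_{l-1}(r)|^2\le c\,l^{-1}$ uniformly in $r\in[0,\pi]$, and since $M_l\simeq l$ by (8.26) the product $M_l a_l(r)$ is bounded as claimed. Finally, (9.8) is obtained by specializing $y_1 = y_2$ in (9.5): then $\mu_r(y,y) = v_r$ and every ratio $P^{(\alpha,\beta)}_l(1)/P^{(\alpha,\beta)}_l(1)$ equals $1$, so the series collapses to $v_r - v_r^2 = v_rv'_r$. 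The only nonroutine step is correctly tracking the Jacobi derivative identity together with the change of variables $z=\cos u$; Parseval-type bookkeeping for $C_0$ and the uniform bound (9.3) dispatch the rest.
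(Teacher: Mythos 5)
Your proposal is correct and follows essentially the same route as the paper: expand via (8.31) with $F_1=F_2=\chi_r$, identify the $l=0$ term with $v_r^2$, compute $C_l(\chi_r)$ for $l\ge1$ in closed form, and obtain (9.7) from (9.3) with shifted parameters and (9.8) by setting $y_1=y_2$. The only cosmetic difference is that the paper evaluates the integral by substituting Rodrigues' formula (9.11) and integrating once, whereas you invoke the equivalent Jacobi derivative identity $\frac{d}{dz}[(1-z)^{\alpha+1}(1+z)^{\beta+1}P^{(\alpha+1,\beta+1)}_{l-1}(z)]=-2l(1-z)^{\alpha}(1+z)^{\beta}P^{(\alpha,\beta)}_l(z)$; these are the same computation.
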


\begin{proof}[Proof] Applying the expansion \eqref{eq8.31} to the 
integral \eqref{eq9.4}, we obtain 
\begin{equation}
\mu_r(y_1,y_2)=\kappa(d,d_0) \sum\limits_{l\ge 0}M_l \{C_l(\chi_r)\}^2
\:\frac{P^{(\alpha,\beta)}_l(\cos\theta (y_1,y_2))}{P^{(\alpha,\beta)}_l(1)}, 
\label{eq9.9}
\end{equation}
where $C_l(\chi_l)$ are Fourier-Jacobi coefficients \eqref{eq8.29} of  
the characteristic function $\chi_r$. We have 
\begin{align}
C_l(\chi_r) & = \int\limits^{r}_{0} P^{(\alpha,\beta)}_l(\cos u) (\sin \frac12 u)^{d-1}
(\cos\frac 12 u)^{d_0-1} \, du 
\notag
\\
& = (\frac12)^{\frac{d-1}{2}+\frac{d_0-1}{2}}
\int\limits^{1}_{\cos r} (1-z)^{\alpha} (a+z)^{\beta}
P^{(\alpha,\beta)}_l (z) \, dz.
\label{eq9.10}
\end{align}
In view of \eqref{eq2.2}, we have  $C_0(\chi_r)=\kappa(d,d_0)^{-1}v_r$. For $l\ge 1$ 
we use Rodrigues' formula for Jacobi polynomials, see~\cite[Eq.~(4.3.1)]{34},
\begin{equation}
P^{(\alpha,\beta)}_l(z) =
\frac{(-1)^l}{2^ll!} (1-z)^{-\alpha} (1+z)^{-\beta}
\frac{d^l}{dz^l} 
\left\{ (1-z)^{l+\alpha} (1+z)^{l+\beta} \right\}.
\label{eq9.11}
\end{equation}
Substituting \eqref{eq9.11} into \eqref{eq9.10}, we obtain
\begin{align*}
& \int\limits_{\cos r}^{1} (1-z)^{\alpha} (1+z)^{\beta}
P^{(\alpha,\beta)}_l (z) \, dz
\\
& = (2l)^{-1} (1-\cos r)^{\alpha +1} (1+\cos r)^{\beta +1}
P^{(\alpha+1,\beta+1)}_{l-1} (\cos r)
\\
& = 2^{\alpha +\beta +1} l^{-1} (\sin \frac12r)^{2\alpha +2}
(\cos \frac12r)^{2\beta+2}
P^{(\alpha+1,\beta+1)}_{l-1} (\cos r).            
\end{align*}
In view of the definitions \eqref{eq9.1} and \eqref{eq8.24}, we have
\begin{align}
C_l (\chi_r) &=l^{-1} (\sin \frac12 r)^d (\cos \frac 12 r)^{d_0}
P^{(\alpha+1,\beta+1)}_{l-1}(\cos r) 
\notag
\\
& = l^{-1} (\sin \frac12 r)^{\frac{d-1}{2}}
(\cos \frac12 r)^{\frac{d_0-1}{2}}
J^{(\alpha+1,\beta+1)}_{l-1} (r). 
\label{eq9.12}
\end{align}
Substituting \eqref{eq9.12} into \eqref{eq9.9}, we obtain 
the formulas \eqref{eq9.5} and \eqref{eq9.6}.

The bound \eqref{eq9.7} follows from \eqref{eq9.6}, since $M_l\simeq l$, 
see~\eqref{eq8.26}, and 
$J^{(\alpha+1,\beta+1)}_{l-1}(r)\lesssim l^{-1/2}$, see \eqref{eq9.3}.  

From \eqref{eq9.4}, we obtain $\mu_r(y,y)=v_r$. Putting  $y_1=y_2=y$ 
in \eqref{eq9.5}, we obtain \eqref{eq9.8}. In fact, the formula \eqref{eq9.8} 
is Parseval's equality for
the expansion \eqref{eq8.28} of the characteristic function $\chi_r$.
\end{proof}

An immediate corollary of Lemma~\ref{lem3.1} is the following.
\begin{theorem}\label{thm9.1} For any space $Q(d,d_0)$ the following
spherical function expansions hold:

{\rm (i)}  For the kernels $\la_r(y_1,y_2)$, see \eqref{eq1.6}, and the metrics
$\theta^{\Delta}_r(y_1,y_2)$, see \eqref{eq1.14}, we have 
\begin{equation}
\la_r(y_1,y_2)=\kappa(d,d_0) \sum\limits_{l\ge 1} l^{-2}M_la_l(r)
\frac{P^{(\alpha,\beta)}_{l}(\cos \theta 
(y_1,y_2))}{P^{(\alpha,\beta)}_{l}(1)},
\label{eq9.13}
\end{equation}
\begin{align}
 \theta^{\Delta}_r(y_1,y_2) & =\langle \theta^{\Delta}_r\rangle -\kappa(d,d_0)
\sum\limits_{l\ge 1} l^{-2} M_la_l(r)
\frac{P^{(\alpha,\beta)}_{l}(\cos \theta 
(y_1,y_2))}{P^{(\alpha,\beta)}_{l}(1)},
\notag
\\
& = \kappa(d,d_0)\sum\limits_{l\ge 1} l^{-2} M_la_l(r) 
\left[ 1-\frac{P^{(\alpha,\beta)}_{l}(\cos \theta 
(y_1,y_2))}{P^{(\alpha,\beta)}_{l}(1)} \right],
\label{eq9.14}
\end{align}
where $\langle \theta^{\Delta}_r\rangle= v_rv'_r$ is the average value of metric
$\theta^{\Delta}_r$, see \eqref{eq1.26}, 
and the coefficients $a_l(r)$ are defined in \eqref{eq9.6}. 

{\rm (ii)} If the weight function $\eta \in W (d,d_0)$, then for the kernels
$\la (\eta, y_1,y_2)$, see \eqref{eq1.8}, and the metrics
$\theta^{\Delta}(\eta,y_1,y_2)$, see \eqref{eq1.13}, we have 
\begin{equation}
\la(\eta, y_1,y_2)=\kappa(d,d_0) \sum\limits_{l\ge 1} l^{-2}M_lA_l(\eta)
\frac{P^{(\alpha,\beta)}_{l}(\cos \theta 
(y_1,y_2))}{P^{(\alpha,\beta)}_{l}(1)},
\label{eq9.15}
\end{equation}
\begin{align}
\theta^{\Delta}(\eta, y_1,y_2) & =\langle \theta^{\Delta} (\eta) \rangle -\kappa(d,d_0)
\sum\limits_{l\ge 1} l^{-2} M_lA_l(\eta)
\frac{P^{(\alpha,\beta)}_{l}(\cos \theta 
(y_1,y_2))}{P^{(\alpha,\beta)}_{l}(1)},
\notag
\\
& = \kappa(d,d_0) \sum\limits_{l\ge 1} l^{-2} M_lA_l(\eta) 
\left[ 1-
\frac{P^{(\alpha,\beta)}_{l}(\cos \theta 
(y_1,y_2))}{P^{(\alpha,\beta)}_{l}(1)} \right],
\label{eq9.16}
\end{align}
where $\langle \theta^{\Delta}(\eta)\rangle$ is the average value of metric
$\theta^{\Delta}(\eta)$, see \eqref{eq1.18}, and the coefficients $A_l(\eta)$ 
are defined by
\begin{equation}
A_l(\eta)=\int\limits^{\pi}_{0} \eta (a) a_l(u) \, du. 
\label{eq9.17}
\end{equation}
\end{theorem}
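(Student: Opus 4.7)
\medskip

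\noindent\textbf{Proof proposal.} The plan is to reduce Theorem~9.1 to Lemma~9.1 plus the $L_1$-invariance principle of Theorem~1.1, and then pass from the pointwise statement (i) to the integrated statement (ii) by Fubini's theorem, with convergence controlled by the bound \eqref{eq9.7}.

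For part (i), I would first use the fact that $Q(d,d_0)$ is distance-invariant and apply the identity $\la_r(y_1,y_2)=\mu_r(y_1,y_2)-v_r^2$ from \eqref{eq1.23}. Substituting the expansion of Lemma~9.1, the $l=0$ term $v_r^2$ cancels, and the remaining series is exactly \eqref{eq9.13}. The first form of \eqref{eq9.14} then follows immediately from the weak $L_1$-invariance principle \eqref{eq1.29}, namely $\theta^{\Delta}_r(y_1,y_2)=\langle \theta^{\Delta}_r\rangle-\la_r(y_1,y_2)$. The second (bracketed) form of \eqref{eq9.14} follows from the first by using the equality \eqref{eq9.8}, i.e.
\begin{equation*}
\langle \theta^{\Delta}_r\rangle = v_r v'_r = \kappa(d,d_0)\sum_{l\ge 1} l^{-2}M_l a_l(r),
\end{equation*}
which was derived in Lemma~9.1 as Parseval's identity for $\chi_r$; this lets one pull the constant term inside the sum and combine it with the Jacobi-polynomial factor.

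For part (ii), I would integrate the identities of (i) against $\eta(r)\,dr$ on $[0,\pi]$ and interchange the order of summation and integration. This produces the Fourier--Jacobi coefficients $A_l(\eta)=\int_0^\pi \eta(r)a_l(r)\,dr$, yielding \eqref{eq9.15}--\eqref{eq9.17}. The key technical point, and the main obstacle, is justifying the interchange. Using \eqref{eq9.7} together with $|P_l^{(\alpha,\beta)}(\cos\theta)/P_l^{(\alpha,\beta)}(1)|\le 1$, each term in \eqref{eq9.13} is dominated in absolute value by $c\, l^{-2}(\sin\tfrac12 r)^{d-1}(\cos\tfrac12 r)^{d_0-1}$. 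Multiplying by $\eta(r)$ and summing gives a bound proportional to $(\sum_l l^{-2})\cdot \|\eta\|_{d,d_0}$, which is finite precisely under the hypothesis $\eta\in W(d,d_0)$. Dominated convergence (or Fubini--Tonelli applied to the absolutely convergent double integral) then legitimizes the interchange. The same dominating bound controls the bracketed version in \eqref{eq9.16}, since $|1-P_l^{(\alpha,\beta)}(\cos\theta)/P_l^{(\alpha,\beta)}(1)|\le 2$.

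The only subtlety is that $\eta\in W(d,d_0)$ is a slightly weaker condition than the one needed for the pointwise bounds in Lemma~2.1(i), which require $\eta\in W(d+1,d_0+1)$. Here the extra factor $l^{-2}$ coming from the Fourier--Jacobi coefficient \eqref{eq9.12} compensates, so $W(d,d_0)$ is the correct class: the series $\sum_l l^{-2}M_l A_l(\eta)$ is dominated by $\|\eta\|_{d,d_0}\sum_l l^{-2}<\infty$, guaranteeing uniform (in $y_1,y_2$) absolute convergence of the expansions \eqref{eq9.15} and \eqref{eq9.16}. No further analytic work is needed, and the theorem follows.
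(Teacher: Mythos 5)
Your proposal is correct and follows essentially the same route as the paper: part (i) by substituting the expansion of Lemma~9.1 into the distance-invariant identities (the paper plugs \eqref{eq9.5} directly into \eqref{eq1.25}, while you equivalently invoke the invariance principle \eqref{eq1.29}, which was itself derived from \eqref{eq1.23} and \eqref{eq1.25}), and part (ii) by term-by-term integration justified via the bound \eqref{eq9.7}. Your explicit domination argument with $|P_l^{(\alpha,\beta)}(\cos\theta)/P_l^{(\alpha,\beta)}(1)|\le 1$ and $\sum_l l^{-2}\|\eta\|_{d,d_0}<\infty$ just spells out what the paper leaves implicit.
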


\begin{proof}[Proof] (i) Substituting the expansion \eqref{eq9.5} into \eqref{eq1.23} 
and \eqref{eq1.25}, we obtain the expansions \eqref{eq9.13} and \eqref{eq9.14}. 
Notice that in the second equality in \eqref{eq9.14} the formula \eqref{eq9.8} 
has been taken into account. 

(ii)  In view of the bound \eqref{eq9.7}, the series \eqref{eq9.13} and 
\eqref{eq9.14} can be integrated 
term by term with  $\eta\in W(d,d_0)$. This gives the expansions 
\eqref{eq9.15} and \eqref{eq9.16}. 
\end{proof}

Notice that by Theorem~\ref{thm2.1} the chordal metric $\tau$ is 
a symmetric difference metric \eqref{eq1.13}
with the weight function  $\eta^{\natural}$ and,
therefore, it has the expansion \eqref{eq9.16}.  
At the same time, the chordal metric can be written as follows  
\begin{equation}
\tau(y_1,y_2)=c(\alpha,\beta)  \left[ 1-
\frac{P^{(\alpha,\beta)}_{1}(\cos \theta 
(y_1,y_2))}{P^{(\alpha,\beta)}_{1}(1)} \right]^{1/2},
\label{eq9.18}
\end{equation}
with the constant
$
c(\alpha,\beta)= \left( {\alpha +1}/{\alpha +\beta+2}\right)^{1/2}
= \left( {d}/{d+d_0}\right)^{1/2}.
$

Indeed, by Rodrigues' formula \eqref{eq9.11} 
$P^{(\alpha,\beta)}_{1}(z)=\frac12 
(\alpha+\beta+2)z+\frac12(\alpha-\beta)$, and 
\begin{equation}
\frac12(1-z)=\frac{\alpha +1}{\alpha+\beta+2}
\left[  1- \frac{P^{(\alpha,\beta)}_{1}(z)}{P^{(\alpha,\beta)}_{1}(1)} 
\right].
\label{eq9.20}
\end{equation}
On the other hand, by the definitions \eqref{eq2.4} and \eqref{eq5.24}  
\begin{equation}
\tau (y_1,y_2)=\sin\frac12\theta(y_1,y_2) =
\left[\frac12\left(1-\cos \theta(y_1,y_2)\right)\right]^{1/2}.
\label{eq9.21}
\end{equation}
Comparing \eqref{eq9.20} and \eqref{eq9.21}, we obtain \eqref{eq9.18}.

\section{Bounds for Fourier-Jacobi coefficients}\label{sec10}

In this section we estimate the following coefficients
\begin{align}
a_l(r) & = (\sin \frac12r)^{d-1}(\cos \frac12 r)^{d_0-1}
\left\{ J^{(\alpha+1,\beta+1)}_{l-1}(r)\right\}^2,
\label{eq10.1}
\\
A_l(\eta) & =\int\limits_{0}^{\pi} \eta (u) a_l(u) \, du,
\label{eq10.2}
\\
A_l(\chi_r) & =\int\limits^{\pi}_{0}\chi_r(u)a_l(u) \,du 
=\int\limits^{r}_{0} a_l(u)\, du, 
\label{eq10.3}
\end{align}
where $J^{(\alpha,\beta)}_l(\cdot)$ is defined in \eqref{eq9.1}. In fact, we prove special
weighted bounds for Jacobi polynomials.
\begin{lemma}\label{lem10.1} Let the weight function $\eta\in W(d,d_0)$, 
$\eta \ne 0$, then the following bounds hold: 

{\rm (i)} For $0<r\le \pi$ and $l\ge 1$, we have
\begin{equation}
A_l(\eta) >c r^{-d+1}  a_l(r). 
\label{eq10.4}
\end{equation}

{\rm (ii)} There exists a constant $L\ge 1$, depending only on $\alpha$ and $\beta$, 
such that for $0<r\le \pi/2$ and $lr >L$, we have
\begin{equation}
A_l(\eta) <C r^{-d} A_l(\chi_r).
\label{eq10.5}
\end{equation}

The positive constants $c$ and $C$ in \eqref{eq10.4} and \eqref{eq10.5} depend only 
on $\alpha$, $\beta$ and $\eta$. 
\end{lemma}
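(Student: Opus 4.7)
The overall strategy uses the asymptotic \eqref{eq9.2} and the uniform bound \eqref{eq9.3} to control $\{J^{(\alpha+1,\beta+1)}_{l-1}(u)\}^2$. On $[c_0/l,\pi-c_0/l]$ the squared kernel equals $(\pi(l-1))^{-1}\cos^2[\Lambda_l u+\phi_l]$ with $\Lambda_l\simeq l$, modulo errors of size $O((l^2\sin u)^{-1})$; after the identity $2\cos^2=1+\cos 2(\cdot)$ the ``$1$'' contributes a main term of order $1/l$ against any integrable weight, while the oscillatory piece is handled by one integration by parts. Everywhere on $[0,\pi]$, \eqref{eq9.3} gives $\{J^{(\alpha+1,\beta+1)}_{l-1}(u)\}^2\le c(l+1)^{-1}$, so $a_l(u)\le c(\sin\tfrac12 u)^{d-1}(\cos\tfrac12 u)^{d_0-1}/(l+1)$ and, integrating, $A_l(\eta)\le c\,\|\eta\|_{d,d_0}/(l+1)$.

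For part (i), the pointwise bound just obtained yields $a_l(r)\lesssim r^{d-1}/l$ (using $\sin\tfrac12 r\simeq r$ and $\cos\tfrac12 r$ bounded), which reduces \eqref{eq10.4} to showing $A_l(\eta)\gtrsim 1/l$ uniformly in $l\ge 1$. Because $\eta\ge 0$, $\eta\not\equiv 0$ and $\eta\in W(d,d_0)$, there exists a subinterval $[a,b]\subset(0,\pi)$ with $m:=\int_a^b\eta(u)(\sin\tfrac12 u)^{d-1}(\cos\tfrac12 u)^{d_0-1}\,du>0$. For every $l$ large enough that $[a,b]\subset[c_0/(l-1),\pi-c_0/(l-1)]$, substituting \eqref{eq9.2} into $A_l(\eta)$, applying the $\cos^2$-averaging identity, and bounding the resulting oscillatory integral by one integration by parts yields $A_l(\eta)=m/(2\pi l)+o(1/l)\ge c/l$. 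For the finitely many remaining small $l$, $A_l(\eta)>0$ term-by-term (since $a_l$ is nonnegative with only finitely many zeros on $[0,\pi]$ and $\|\eta\|_{d,d_0}>0$), so a minimum over this finite set yields a uniform lower bound; adjusting $c$ finishes \eqref{eq10.4}.

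For part (ii), the upper half $A_l(\eta)\le C/l$ is already in hand, so the content is $A_l(\chi_r)\gtrsim r^d/l$ whenever $0<r\le\pi/2$ and $lr>L$. I would choose $L$ large enough (depending only on $\alpha,\beta$) that $lr>L$ implies both $L/l\le r/2$ and $[L/l,r]\subset[c_0/l,\pi-c_0/l]$. On $[L/l,r]$ the asymptotic \eqref{eq9.2} applies and $(\cos\tfrac12 u)^{d_0-1}\gtrsim 1$ since $u\le r\le\pi/2$; after $2\cos^2=1+\cos 2(\cdot)$, the main contribution is
\[
\frac{1}{2\pi(l-1)}\int_{L/l}^r(\sin\tfrac12 u)^{d-1}(\cos\tfrac12 u)^{d_0-1}\,du\gtrsim\frac{1}{l}\int_{L/l}^r u^{d-1}\,du\gtrsim\frac{r^d}{l},
\]
using $L/l\le r/2$ in the last step. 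The $O((l^2\sin u)^{-1})$ remainder from \eqref{eq9.2} integrates (after reducing via $\sin u\simeq u$) to at most $O((lr)^{-1})\cdot r^d/l\le r^d/(lL)$, and the oscillatory integral $\int_{L/l}^r(\sin\tfrac12 u)^{d-1}(\cos\tfrac12 u)^{d_0-1}\cos[2\Lambda_l u+2\phi_l]\,du$, after one integration by parts, produces boundary and interior contributions each of size $O(r^{d-1}/l)$, i.e.\ $O((lr)^{-1})$ relative to the main term. Combining gives $A_l(\chi_r)\ge c r^d/l$ for $L$ large, hence $A_l(\eta)\le C/l\le (C/c)r^{-d}A_l(\chi_r)$, proving \eqref{eq10.5}.

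The main obstacle throughout is making the $\cos^2$-averaging quantitative: the $O((l\sin u)^{-1})$ error in \eqref{eq9.2} blows up near the endpoints $u=0,\pi$, which is precisely why the hypothesis $lr>L$ appears in (ii) and why the finitely many small $l$ in (i) must be treated separately by a positivity/compactness argument rather than by asymptotics. Once $L$ is fixed large enough, the ratio of every error term to the main term is bounded by $C/L$ uniformly in the remaining parameters, and this uniform smallness is what allows both error terms (the $O((l\sin u)^{-1})$ remainder and the oscillatory integration-by-parts residue) to be absorbed into the implicit constants of \eqref{eq10.4} and \eqref{eq10.5}.
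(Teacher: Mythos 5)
Your proposal follows essentially the same route as the paper: the uniform bound \eqref{eq9.3} gives the upper estimates $a_l(r)\lesssim (\sin\tfrac12 r)^{d-1}(\cos\tfrac12 r)^{d_0-1}l^{-1}$ and $A_l(\eta)\lesssim \|\eta\|_{d,d_0}l^{-1}$, while the asymptotic \eqref{eq9.2} plus the averaging identity for the squared oscillation gives the matching lower bounds $A_l(\eta)\gtrsim l^{-1}$ and $A_l(\chi_r)\gtrsim r^d l^{-1}$, with small $l$ in part (i) handled by positivity and compactness. Two details need repair, though neither affects the architecture. First, in part (i) you cannot integrate by parts against $\eta$: the class $W(d,d_0)$ only requires $\eta\ge 0$ and $\|\eta\|_{d,d_0}<\infty$, so $\eta$ need not be differentiable or of bounded variation; the oscillatory integral $\int \eta(u)(\sin\tfrac12 u)^{d-1}(\cos\tfrac12 u)^{d_0-1}\cos 2[(l+l_0)u+r_0]\,du$ must instead be shown to be $o(1)$ by the Riemann--Lebesgue lemma, which is exactly what the paper does (integration by parts is fine in part (ii), where the weight is the smooth function $(\sin\tfrac12 u)^{d-1}(\cos\tfrac12 u)^{d_0-1}$). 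Second, your choice of integration domain $[L/l,r]$ in part (ii) rests on the claim that $lr>L$ implies $L/l\le r/2$, which is false ($lr>L$ only gives $L/l<r$); either use two separate constants, or simply integrate over $[r/2,r]$ as the paper does, where $lr>L$ guarantees $lu>L/2$ throughout so that \eqref{eq9.2} applies and the error $R_l(u)=O((lu)^{-1})=O(L^{-1})$ is uniformly small. With these two substitutions your argument coincides with the paper's proof.
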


\begin{proof}[Proof] The asymptotic formula \eqref{eq9.2} implies the following relations 
\begin{align}
J^{(\alpha+1,\beta+1)}_{l-1}(r) & = (\pi l)^{-1}
\left\{\sin [(l+l_0)r+r_0] +O((l\sin r)^{-1})  \right\},
\label{eq10.6}
\\
\{J^{(\alpha+1,\beta+1)}_{l-1}(r)\}^2 & =
\left\{ \frac12 -\frac 12\cos 2 [(l+l_0)r+r_0] + R_l(r)\right\},
\label{eq10.7}
\end{align}
where the error term $R_l(r)$ satisfies  
\begin{equation}
R_l(r)=
\begin{cases}
O(l^{-1}) & \quad \text{for} \quad 0<c_0\le r\le \pi -c_0, \\
O((lr)^{-1}) & \quad \text{for} \quad l^{-1}\le r \le \pi/2,
\end{cases}               
\label{eq10.8}
\end{equation}
where $0<c_0<\pi/2$ is arbitrary fixed.

(i) Since $\eta \in W(d,d_0)$, $\eta\ne 0$, a sufficiently small constant $0<c_0<\pi/2$ 
can be chosen to satisfy
\begin{align}
& \int\limits^{\pi-c_0}_{c_0} \eta (u) (\sin \frac12 u)^{d-1}
(\cos \frac12 u)^{d_0-1} \, du
\notag 
\\
&  \ge \frac12 \int\limits^{\pi}_{0} \eta(u) (\sin \frac12 u)^{d-1}
(\cos \frac 12 u)^{d_0-1} \, du =\frac 12 \| \eta\|_{d,d_0} >0.
\label{eq10.9}
\end{align}
Using \eqref{eq10.9}, \eqref{eq10.7} and the first bound in \eqref{eq10.8}, we obtain 
\begin{align}
& A_l (\eta) \ge \int\limits^{\pi-c_0}_{c_0}
\eta(u)(\sin\frac12 u)^{d-1}(\cos\frac12 u)^{d_0-1}
\left \{J^{(\alpha +1,\beta+1)}_{l-1} (u)\right\}^2\, du  
\notag
\\
& \ge (\pi l)^{-1} \Big\{ \frac14 \| \eta\|_{d,d_0}  
\! -\! \frac 12\int\limits^{\pi-c_0}_{c_0} \eta(u) (\sin \frac12 u)^{d-1}
(\cos \frac12 u)^{d_0-1} \cos 2 
[(l\!+\!l_0)u\!+\!r_0] \, du 
\notag
\\
& \qquad+O(l^{-1})\Big\}
 = (4\pi l)^{-1}   \| \eta \|_{d,d_0} + o(1),
\label{eq10.10}
\end{align}
where in the last equality the Riemann-Lebesgue lemma has been used.
Hence
\begin{equation}
A_l(\eta) \ge (8\pi l)^{-1}  \| \eta \|_{d,d_0}
\label{eq10.11}
\end{equation}
for all sufficiently large $l>l_1$. We have
\begin{equation}
\min\limits_{1\le l\le l_1} lA_l(\eta) >0,
\label{eq10.12}
\end{equation}
since, $A_l(\eta)>0$ for all $l\ge 1$. From \eqref{eq10.11} and \eqref{eq10.12}, 
we conclude  that the bound  
\begin{equation}
A_l(\eta) > c_1l^{-1}
\label{eq10.13}
\end{equation}
holds for all $l\ge 1$  with a constant $c_1>0$ depending only on $\alpha, \beta$
and $\eta$. 

From the other hand, the bound \eqref{eq9.3} implies 
\begin{equation}
r^{-d+1}a_l(r) =r^{-d+1}
(\sin \frac12 r)^{d-1}  (\cos \frac12 r)^{d_0-1}
\left\{ J^{(\alpha+1,\beta+1)}_{l-1}(r)\right\}^2
\le c_2l^{-1}
\label{eq10.14}
\end{equation}
Comparing the bounds \eqref{eq10.13} and \eqref{eq10.14}, 
we obtain the bound \eqref{eq10.4}  with $c=c_1c^{-1}_2$. 

(ii) Let $0<r\le \pi/2$ and $lr \ge L$, where $L\ge 1$ is a constant
which will be fixed later. 
From the definition \eqref{eq10.3}, we obtain 
\begin{align}
& r^{-d} A_l(\chi_r) \ge r^{-d}\int\limits^{r}_{r/2} a_l(u) \, du
\notag
\\
& \ge r^{-d} (\sin \frac14 r)^{d-1} (\cos \frac 12 r)^{d_0-1}
\int\limits^{r}_{r/2} 
\left\{ J^{(\alpha+1,\beta +1)}_{l-1}(u)\right\}^2\, du
\notag
\\
& > c_1r^{-1} \int\limits^{r}_{r/2} 
\left\{ J^{(\alpha+1,\beta +1)}_{l-1}\right\}^2\, du ,
\label{eq10.15}
\end{align}
where one can put  $c_1=(1/8)^{d-1}(1/2)^{d_0-1}$.             
Using the asymptotic formula \eqref{eq10.7} and the second bound in \eqref{eq10.8}, 
we obtain
\begin{align}
& r^{-1}\int\limits^{r}_{r/2} 
\left\{ J^{(\alpha+1,\beta +1)}_{l-1}(u)\right\}^2\, du
\notag
\\
& =  (\pi l)^{-1} \left\{  \frac 14  -\frac 12 r^{-1} 
\int\limits^{r}_{r/2} \cos 2 [(l+l_0) u+r_0]\, du +O(L^{-1})\right\} .
\label{eq10.16}
\end{align}
The integral on the right-hand side in \eqref{eq10.16}   
is of order $O((r l)^{-1})\lesssim O (L^{-1})$. 
Substituting \eqref{eq10.16} into \eqref{eq10.15}, we obtain 
\begin{equation}
r^{-d}A_l(\chi_r)>c_1(4\pi l)^{-1} \left\{ 1+O(L^{-1})\right\}.
\label{eq10.17}
\end{equation}
Now, in view of \eqref{eq10.17}, we can fix a sufficiently large constant $L$, 
depending only on $\alpha$ and $\beta$, to satisfy
\begin{equation}
r^{-d} A_l (\chi_r)>  c_1(8\pi l)^{-1} =c_2l^{-1}. 
\label{eq10.18}
\end{equation}
From the other hand, using the bound \eqref{eq9.3} and the definitions \eqref{eq10.2} 
and \eqref{eq2.13}, we obtain 
\begin{equation}
A_l(\eta) \le C_2\| \eta \|_{d,d_0} l^{-1} =C_3l^{-1}. 
\label{eq10.19}
\end{equation}
Comparing \eqref{eq10.18} and \eqref{eq10.19}, 
we obtain the bound \eqref{eq10.5}  with $C=C_3c^{-1}_2$. 
\end{proof}

\section{Proof of Theorems 2.2 and 3.1}\label{sec11}

Theorems~\ref{thm2.2} and 3.1 are immediate corollaries of bounds
on discrepancies given below in Theorem~\ref{thm11.1}.

By Theorem 9.1 we can write the discrepancies 
\eqref{eq1.5} and \eqref{eq1.7} in the following form 
\begin{align}
\la_r[\D_N] & = \kappa(d,d_0) \sum\limits_{l\ge 1} l^{-2} M_la_l(r) 
\varphi_l[\D_N],
\label{eq11.1}
\\
\la[\eta, \D_N] & = \kappa(d,d_0) \sum\limits_{l\ge 1} l^{-2} M_lA_l(\eta) 
\varphi_l[\D_N],
\label{eq11.2}
\\
\la[\chi_r,\D_N] & = \kappa(d,d_0) \sum\limits_{l\ge 1}  l^{-2}
M_lA_l (\chi_r) \varphi_l[\D_N],
\label{eq11.3}
\end{align}
here $\D_N\subset Q (d,d_0)$ is an arbitrary $N$-point subset, and the quantities  
$\varphi_l[\D_N]\ge 0$ are defined in \eqref{eq8.32}.  
The series \eqref{eq11.1} -- \eqref{eq11.3} converge and all their terms are nonnegative. 

\begin{theorem}\label{thm11.1}  Let the weight function $\eta\in W(d,d_0)$, $\eta \ne 0$, 
then the following bounds hold:

{\rm  (i)} For any $N$-point subset $\D_N\subset Q (d,d_0)$ and an arbitrary $r, \, 0<r\le \pi$, 
we have
\begin{equation}
\la [\eta, \D_N] >c r^{-d+1} \la_r [\D_N],
\label{eq11.4}
\end{equation}

{\rm (ii)} There exists a constant $L\ge 1$, depending only $d$ and $d_0$, 
such that for any $N$-point $t$-design  $\D_N\subset Q(d,d_0)$ with $t\ge 2L/\pi$,
we have  
\begin{equation}
\la [\eta, \D_N] < C r^{-d} \la [\chi_r, \D_N], \; \; r=L t^{-1}.
\label{eq11.5}
\end{equation}

The positive constants $c$ and $C$ in \eqref{eq11.4} and \eqref{eq11.5} depend 
only on $d$, $d_0$ and $\eta$. 
\end{theorem}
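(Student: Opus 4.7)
The plan is to derive Theorem~\ref{thm11.1} by term-by-term comparison of the spherical-function expansions \eqref{eq11.1}--\eqref{eq11.3}, using Lemma~\ref{lem10.1} to compare the Fourier--Jacobi coefficients $a_l(r)$, $A_l(\eta)$, $A_l(\chi_r)$, and using the positive-definiteness inequality \eqref{eq8.32} to ensure that $\varphi_l[\D_N]\ge 0$ for every $l\ge 1$. This nonnegativity is the crucial feature that turns pointwise coefficient bounds into inequalities between sums.

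For part (i), I would start from \eqref{eq11.1} and \eqref{eq11.2} and apply Lemma~\ref{lem10.1}(i), which gives $A_l(\eta) > c\, r^{-d+1}\, a_l(r)$ for all $l\ge 1$ and all $0<r\le\pi$. Since $\kappa(d,d_0)>0$, $M_l>0$, and $\varphi_l[\D_N]\ge 0$, multiplying termwise by $\kappa(d,d_0)\,l^{-2}M_l\,\varphi_l[\D_N]$ and summing over $l\ge 1$ yields \eqref{eq11.4} immediately.

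For part (ii), the point is that for an $N$-point $t$-design the identities \eqref{eq8.33} force $\varphi_l[\D_N]=0$ for $1\le l\le t$, so the expansions \eqref{eq11.2} and \eqref{eq11.3} reduce to sums over $l\ge t+1$. Choose $L\ge 1$ to be the constant provided by Lemma~\ref{lem10.1}(ii), and set $r=Lt^{-1}$. The hypothesis $t\ge 2L/\pi$ gives $r\le \pi/2$, and the restriction of the summation to $l\ge t+1$ gives $lr\ge (t+1)L/t>L$, so Lemma~\ref{lem10.1}(ii) applies for every nonzero term of the remaining series and yields $A_l(\eta)<C\,r^{-d}A_l(\chi_r)$. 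A term-by-term comparison, again using $\varphi_l[\D_N]\ge 0$, then gives \eqref{eq11.5}.

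Once Theorem~\ref{thm11.1} is established, Theorems~\ref{thm2.2} and \ref{thm3.1} follow essentially mechanically: the bound \eqref{eq3.8} is obtained from \eqref{eq11.5} by estimating $\lambda[\chi_r,\D_N]$ via the trivial bound $\lambda_r[\D_N]\le (\nu[\D_N,r])^2$ (after pulling the constant part $Nv_r$ into $\nu$) and integrating with the weight $\chi_r$, while the lower bound of \eqref{eq2.19} is obtained from \eqref{eq11.4} by choosing $r\simeq N^{-1/d}$ and invoking the classical fact (via the Cauchy--Schwarz inequality applied to $\Lambda[B_r(y),\D_N]$) that $\lambda_r[\D_N]\gtrsim r^d$ for $r\lesssim N^{-1/d}$; the remaining bound in \eqref{eq2.18} then follows from the $L_1$-invariance principle \eqref{eq1.32}. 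The only real obstacle in the proof of Theorem~\ref{thm11.1} itself is matching the admissibility hypotheses of Lemma~\ref{lem10.1}(ii) to the $t$-design setting, and the choice $r=Lt^{-1}$ with $t\ge 2L/\pi$ handles both $r\le\pi/2$ and $lr>L$ simultaneously.
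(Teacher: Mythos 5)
Your proposal is correct and follows essentially the same route as the paper: term-by-term comparison of the expansions \eqref{eq11.1}--\eqref{eq11.3} using Lemma~\ref{lem10.1} for the coefficients and the positivity $\varphi_l[\D_N]\ge 0$ from \eqref{eq8.32}, with the $t$-design condition \eqref{eq8.33} killing the terms $l\le t$ so that $r=Lt^{-1}$ puts every surviving index in the range $lr>L$ where Lemma~\ref{lem10.1}(ii) applies. The only (immaterial) divergence is in your side remarks on deducing Theorem~\ref{thm2.2}, where the paper bounds $\la_r[\D_N]$ from below via the distance of $Nv_r$ to the nearest integer rather than by Cauchy--Schwarz.
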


\begin{proof}[Proof] (i) Applying the bound \eqref{eq10.4} to the series \eqref{eq11.1} and
\eqref{eq11.2}, we obtain the bound \eqref{eq11.4}.

(ii) If $\D_N\subset Q(d,d_0)$ is a $t$-design,  
then $\varphi [\D_N]=0$ for $l=0,1,\dots,t$, see \eqref{eq8.33}. Hence, the summation
in all series  \eqref{eq11.1} -- \eqref{eq11.3} is taken over $l>t$. 

Let $L$ be chosen as the constant indicated in Lemma~\ref{lem10.1}(ii).  
If $r=Lt^{-1}$, then we have $0<r\le \pi/2$ for $t\ge 2L/\pi$ and
$lr>L$ for $l>t$. Applying the bound \eqref{eq10.5} to the series \eqref{eq11.2} and 
\eqref{eq11.3}, we obtain the bound \eqref{eq11.5}.
\end{proof}

Now we are in position to prove Theorems~\ref{thm2.2} and 3.1. 

\begin{proof}[ Proof of Theorem~\ref{thm2.2}.] As it was explained in comments 
to Theorem~\ref{thm2.2} 
we have to prove only the left bound in \eqref{eq2.19}. From the definitions of discrepancies 
\eqref{eq1.3}, \eqref{eq1.5}, we conclude that 
$
\la_r [\D_N]\, \ge \,\, \langle N v_r \rangle^2,
$
where $\langle z \rangle=\min\{ |z-n|,n\in \bZ\}$ is the distance of  
$z\in \bR$ from the nearest integer. Define $r$ by $N v_r=1/2$, then
$\la_r[\D_N]\ge 1/2$.  In view of \eqref{eq2.3}, $r\simeq N^{-1/d}$ and the bound 
\eqref{eq11.4} implies the left bound in \eqref{eq2.19}.
\end{proof}

\begin{proof}[{ Proof of Theorem~\ref{thm3.1}.}] First of all, we notice that 
\begin{align}
&\int\limits_Q  ( \# \{ B_r(y)\cap \D_N\})^2\, d\mu (y) 
 = \int\limits_Q  \left( \sum\limits_{y_1\in \D_N} \chi 
(B_r(y),y_1)\right)^2\, d\mu (y)  \notag
\\
& = \int\limits_Q  \left( \sum\limits_{y_1\in \D_N} \chi 
(B_r(y_1),y)  \right)^2 \, d\mu (y)
= \sum\limits_{y_1,y_2\in \D_N}  \mu (B_r(y_1)\cap B_r(y_2)),
\label{eq11.6}
\end{align}
here the formula \eqref{eq1.16} has been used. Comparing  \eqref{eq11.6} 
with \eqref{eq1.24}, we obtain  
\begin{equation}
\la_r[\D_N] <\int\limits_Q
(\# \{B_r(y)\cap \D_N\} )^2 \, d\mu (y) \le
(\nu [\D_N,r])^2,
\label{eq11.7}
\end{equation}
where $\nu [\D_N, r]$ is defined in \eqref{eq3.7}. Therefore
\begin{equation}
\la [\chi_r,\D_N] =\int\limits^{r}_{0} \la_u[\D_N] \, du <r
(\nu [\D_N,r])^2,
\label{eq11.8}
\end{equation}
since $\nu [\D_N, r]$ is a nondecreasing function of $r$.
Substituting \eqref{eq11.8} into \eqref{eq11.5}, we obtain 
\begin{equation}
\la [\eta,\D_N] <Cr^{-d+1} (\nu [\D_N,r])^2.
\label{eq11.9}
\end{equation}
If  $r=Lt^{-1}$, then the bound \eqref{eq11.9} coincides with the bound \eqref{eq3.8}.
\end{proof}

\section{Additional remarks}\label{sec12}

In this section we discuss very briefly some questions related with the 
matter of the present paper. 

(i) First of all we explain 
the appearance of anomalously small errors in the formula \eqref{eq1.39}.
It is known that for the sphere $S^d$ 
the geodesic metric $\theta$ can be written as follows 
\begin{equation}
\theta(y_1,y_2)=\pi \mu(B_{\pi/2}(y_1)\Delta B_{\pi/2}(y_2)), \quad 
y_1,y_2\in S^d,
\label{eq4.12}
\end{equation}
where $B_{\pi/2}(y)=\{x\in S^d:\theta(x,y) < \pi/2\}=\{x\in S^d: (x,y) > 0\}$ is the
hemisphere centered at $y\in S^d$ and $\mu$ the standard Lebesgue measure   
normalized by (1.1), see \cite[Sec.~6.4]{17}. 
Using \eqref{eq1.14}, we can write \eqref{eq4.12} in the form 
\begin{equation}
\theta(y_1,y_2)=\pi(1-2\mu(B_{\pi/2}(y_1)\cap B_{\pi/2}(y_2))
\label{eq4.13}
\end{equation}
Notice that in this form, the equality \eqref{eq4.13} is obvious: it suffers to notice that the measure 
of the intersection of two hemispheres in \eqref{eq4.13} is a linear function of $\theta(y_1,y_2)$.
Comparing \eqref{eq4.13} and \eqref{eq1.25} and taking into account 
that $v_{\pi/2}=1/2$, we can write
\begin{equation}
\theta(y_1,y_2)=2\pi \theta^{\Delta}_{\pi/2}(y_1,y_2).
\label{eq4.14}
\end{equation}
Hence, the geodesic metric $\theta$ on the sphere $S^d$ is a symmetric difference metric.

Using the formula \eqref{eq4.14} and the invariance
principle \eqref{eq1.29} for the sphere $S^d$, we find that
$$
\theta[\D_N]=\lan \theta\ran N^2-2\pi \la_{\pi/2}[\D_N],
$$ 
where 
$$
\la_{\pi/2}[\D_N]=\int\limits_{S^d}\Lambda [B_{\pi/2}(y),\D_N]^2\,d\mu(y)
$$
where
$
\Lambda[B_{\pi/2}(y),\D_N]=\#\{B_{\pi/2}(y)\cap \D_N\}-Nv_{\pi/2}.
$
and $\lan \theta\ran =\pi/2$, see \eqref{eq1.26}.

An $N$-point subset $\D_N\subset S^d$ can be represented as a disjoint union
of two subsets 
$\D_N=\D^{(0)}_{2a}\cup \D^{(1)}_b, N=2a+b,$
where
$\D^{(0)}_{2a}=\{x\in \D_N:-x\in \D_N\}$ and $\D^{(1)}_b=\{x\in \D_N:-x\notin \D_N\}$. 
We have 
$$
\Lambda [B_{\pi/2}(y),\D_N]=\Lambda [B_{\pi/2}(y),\D^{(0)}_{2a}]+\Lambda
[B_{\pi/2}(y),\D^{(1)}_b].
$$
It is clear that $\Lambda [B_{\pi/2}(y),\D^{(0)}_{2a}]=0$ for all $y\in S^d$  
except the hyperplanes  $\lan y,x\ran =0$, $x\in \D^{(0)}_{2a}$. Hence,
$
\la_{\pi/2}[\D_N]=\la_{\pi/2}[\D^{(1)}_b].
$

Let $N=2a$ be even and $\D_N=\D^{(0)}_{2a}$, then  $\la_{\pi/2}[\D_N]=0$. 
Let $N=2a+1$ be odd and $\D_N=\D^{(0)}_{2a}\cup \D^{(1)}_1$, where $\D^{(1)}_1=\{x_0\}$  
is a one-point subset. A simple calculation shows that $\la_{\pi/2}[\{x_0\}]=\pi/2$.
Therefore, $\la_{\pi/2}[\D_N]=\pi/2$, and the relation \eqref{eq1.39} follows.

A similar proof of the relation \eqref{eq1.39} was recently given 
in \cite[Theorem 3.5]{8}. 
Additionally, these authors established the exact value $\varepsilon_N = \pi/2$ 
for odd~$N$.

The relation \eqref{eq1.39} can be also derived from the spherical function 
expansion \eqref{eq9.14} for the geodesic distance on $S^d$.
For the sphere $S^d$, we have $d_0=d$, $\beta=\alpha=d/2-1$, and 
Jacobi polynomials $P^{(\alpha,\alpha)}_{l}(z)$ coincide, up to constant factors, with  
Gegenbauer polynomials. Furthermore, $P^{(\alpha,\alpha)}_l(z)$ for even and odd $l$  
are, correspondingly, even and odd functions of $z$, see \cite[Sec.~4.7]{34}. 
Comparing the formula \eqref{eq4.14} and the expansion \eqref{eq9.14} for $r=\pi/2$, 
we obtain the following expansion for the geodesic distance on $S^d$
\begin{align}
& \theta(y_1,y_2)=   \notag
\\
&=\! 2\pi \left[ \frac14\! -\!(\frac14)^d \kappa(d,d_0)
\sum\limits_{odd \, l\ge 1} l^{-2}M_l 
\left\{ P^{(\alpha+1,\alpha+1)}_{l-1}(0) \right\}^2
\frac{P^{(\alpha,\alpha)}_{l}(\cos \theta 
(y_1,y_2))}{P^{(\alpha,\alpha)}_{l}(1)} \right]
\notag
\\
& =2\pi(\frac14)^d \kappa(d,d_0) \sum\limits_{odd \, l\ge 1} l^{-2}M_l
\left\{ P^{(\alpha+1,\alpha+1)}_{l-1}(0) \right\}^2
\left[ 1-\frac{P^{(\alpha,\alpha)}_{l}(\cos \theta 
(y_1,y_2))}{P^{(\alpha,\alpha)}_{l}(1)} \right]. 
\label{eq9.22}
\end{align}
The expansion contains spherical functions only with odd indexes. 
For odd $l$ for the sums \eqref{eq8.32}, we have  $\varphi_l[\D^{(0)}_{2a}] = 0$
and $\varphi_l[\D^{(0)}_{2a}\cup \D^{(1)}_1] = 1$, where the subsets
$\D^{(0)}_{2a}$ and $\D^{(0)}_{2a}\cup \D^{(1)}_1$ are defined as above.
Substituting these equalities into  \eqref{eq9.22}, we obtain 
the relation \eqref{eq1.39}. 

(ii) The L\'evi--Schoenberg kernel on an arbitrary metric space $\M$ with 
a metric $\rho$ is defined by 
\begin{equation}
k(\rho,y_1,y_2)=\rho (y_1,y_0) +\rho (y_2,y_0) -\rho
(y_1,y_2),
\label{eq12.5}
\end{equation}
where $y_0\in\M$ is a fixed point, see \cite{19} The metric $\rho$ can
be recovered from the kernel $k$ by 
$
\rho(y_1,y_2)= 2(k(\rho,y_1,y_1)+k(\rho,y_2,y_2)-2k (\rho,y_1,y_2)).
$

If the kernel \eqref{eq12.5} is positive definite, i.e. 
$
\sum\nolimits_{1\le i, j\le N} \bar c_i c_j k (\rho,y_i,y_j)\ge 0
$
for any points
$y_1,\dots,y_N\in \M$ and any complex numbers $c_1,\dots, c_N$,
then it can be thought of as a covariance of a Gaussian process (a random field)
on $\M$.
The standard methods of probability theory enable one 
to construct such random field  as a mapping  
$
W:\M\ni y\to W(y) = W(y,\omega )\in L_2(\Omega, d\omega),
$
such that $W(y_0)=~0,\bE W(y_1)=0$,
$\bE W(y_1)W(y_2)  =k(\rho,y_1,y_2)$ and
$\bE (W(y_1)-W(y_2))^2  =\rho (y_1,y_2)$,
for all $y_1,y_2\in \M$. Here $L_2(\Omega, d\omega)$ is the Hilbert space of real-valued
square-integrable random variables on a probability space $\Omega$ with a probability
measure $d\omega$ and $\bE$ denotes the expectation on $L_2(\Omega, d\omega)$.
Furthermore, if $\M$ is a Riemannian manifold and  
$\rho$ is H\"older continuous with respect to the geodesic distance $\theta$, i.e. 
$\rho (y_1,y_2) < c\theta (y_1,y_2)^{\beta}$ with some constants $c$ and $\beta >0$,
then for almost all $\omega \in \Omega$  trajectories of the random field
$W(y,\omega)$ are continuous functions of $y\in \M$. 
For more details we refer to \cite{19}.

Comparing the definitions \eqref{eq1.13} and \eqref{eq1.14} with \eqref{eq12.5},
we obtain
\begin{equation}
k(\theta^{\Delta}(\eta),y_1,y_2)= 
\int\limits_{\R} k(\theta^{\Delta}_r,y_1,y_2) \eta (r) \, dr,
\label{eq12.6}
\end{equation}
where
\begin{equation}
k(\theta^{\Delta}_r,y_1,y_2)= \int\limits_{\M} F_r (y_1,y) F_r (y_2,y) \, d\mu (y),
\label{eq12.7}
\end{equation}
where $F_r(x,y)=\chi (B_r(x),y)-\chi (B_r(y_0),y)$.
This proves that the L\'evi--Schoenberg kernels for all symmetric difference
metrics are positive definite. 
Particularly, in view of \eqref{eq4.14}, for the geodesic metric on
the sphere $S^d$ the formula \eqref{eq12.7} can be written as
\begin{equation}
k(\theta,y_1,y_2)= 
2\pi \int\limits_{S^d} F_{\pi/2} (y_1,y) F_{\pi/2} (y_2,y) \, d\mu (y).
\label{eq12.8}
\end{equation}  
Therefore, the kernel 
$\theta (y_1,y_0) +\theta (y_2,y_0) -\theta(y_1,y_2)$
is positive definite.
This is a well-known theorem of L\'evy, see \cite{25} and \cite{19}. 
Originally, its proof
was obtained in terms of 'white noise' integrals for random fields on $S^d$,
see \cite[Chap.~3 in Appendix]{25}. 
A direct proof was given in \cite[Sec.~4]{19}
in terms of an expansion of the metric $\theta$ by Gegenbauer polynomials.  
The proof of  L\'evy's theorem given above is likely to be the simplest.

Notice that in contrast to the spheres $S^d$, the geodesic metrics  $\theta$ 
on the
projective spaces $\C P^n$, $\H P^n$  and  $\Q P^2$ 
are not symmetric difference metrics and for projective spaces analogs of
L\'evy's theorem are not true. This follows from the results of the paper
\cite[Sec.~4, pp.~225--226]{19}.
At the same time, the L\'evi--Schoenberg kernel $k(\tau,y_1,y_2)$
for the chordal metric $\tau$
is positive definite for all two-point homogeneous spaces $Q(d,d_0)$.
This follows from Theorems \ref{thm2.1}.

A general theory of random fields on two-point homogeneous spaces  
has been developed in \cite{19}.
It should be interesting to study in more details random fields on 
$Q(d,d_0)$ with the covariances \eqref{eq12.6} and \eqref{eq12.7}. 
Notice that Lemma \ref{lem2.1} contains, in fact, conditions under which 
trajectories of such random field are continuous almost surely.


(iii) Finally, 
we notice that
noncompact connected two-point homogeneous spaces $G/K$ are also classified completely
as hyperbolic spaces over algebras $\F=\bR$, $\C$, $\H$, $\bO$, 
see ~\cite[Sec.~8.12]{36}, and one can consider the spaces of double cosets
$\M=\Gamma\setminus Q=\Gamma\setminus G/K$, where $\Gamma\subset G$ is a discrete 
subgroup in the group of isomerties of $Q$, such that the invariant 
measure $\mu(\M)<\infty$.
In this case, discrepancies of distributions and sums of pairwise distances 
for the symmetry difference metrics can be defined 
and their study should be of much interest, especially for non-compact $\M$ of finite measure. 


\end{document}